\newtheorem{theorem}{Theorem}
\newtheorem{lemma}[theorem]{Lemma}
\newtheorem{proposition}[theorem]{Proposition}
\newtheorem{lma}{Lemma A.\ignorespaces}
\newtheorem{tha}[lma]{Theorem A.\ignorespaces}
\newtheorem{defa}[lma]{Definition A.\ignorespaces}
\newtheorem{remark}{Remark}
\theoremstyle{definition}
\newtheorem{definition}[theorem]{Definition}
\newcommand{\be}{\begin{equation}}
\newcommand{\bel}[1]{\begin{equation}\label{#1}}
\newcommand{\ee}{\end{equation}}
\newcommand{\barr}{\begin{eqnarray}}
\newcommand{\earr}{\end{eqnarray}}
\newcommand{\bars}{\begin{eqnarray*}}
\newcommand{\ears}{\end{eqnarray*}}
\newtheorem{subn}{\name}
\newcommand{\bsn}[1]{\def\name{#1}\begin{subn}}
\newcommand{\esn}{\end{subn}}
\newtheorem{sub}{\name}[section]
\newcommand{\bs}{\begin{sub}}
\newcommand{\es}{\end{sub}}
\newcommand{\bth}[1]{\def\name{Theorem}
\begin{sub}\label{t:#1}}
\newcommand{\blemma}[1]{\def\name{Lemma}
\begin{sub}\label{l:#1}}
\newcommand{\bcor}[1]{\def\name{Corollary}
\begin{sub}\label{c:#1}}
\newcommand{\bdef}[1]{\def\name{Definition}
\begin{sub}\label{d:#1}}
\newcommand{\bprop}[1]{\def\name{Proposition}
\begin{sub}\label{p:#1}}
\newcommand{\BA}{\begin{array}}
\newcommand{\EA}{\end{array}}
\newcommand{\BAN}{\renewcommand{\arraystretch}{1.2}
\setlength{\arraycolsep}{2pt}\begin{array}}
\newcommand{\BAV}[2]{\renewcommand{\arraystretch}{#1}
\setlength{\arraycolsep}{#2}\begin{array}}
\newcommand{\BSA}{\begin{subarray}}
\newcommand{\ESA}{\end{subarray}}
\newcommand{\BAL}{\begin{aligned}}
\newcommand{\EAL}{\end{aligned}}
\newcommand{\BALG}{\begin{alignat}}
\newcommand{\EALG}{\end{alignat}}
\newcommand{\BALGN}{\begin{alignat*}}
\newcommand{\EALGN}{\end{alignat*}}
\def\angb<#1>{\langle #1 \rangle}
\newcommand{\supp}{\opname{supp}}
\newcommand {\rd}{\color{red}}
\def\R{\mathbb{R}}
\def\O{\Omega}
\def\supp{\text{\rm supp}}
\numberwithin{equation}{section}
\theoremstyle{definition}
\def\O{\Omega}
\def\supp{\text{\rm supp}}
\newenvironment{formula}[1]{\begin{equation}\label{eq:#1}}
                       {\end{equation}\noindent}
\def\Fi#1{\begin{formula}{#1}}
\def\Ff{\end{formula}\noindent}
\begin{document}


\title[]{Spatio-temporal dynamics of an age-structured  reaction-diffusion system of epidemic type subjected by Neumann boundary condition}

\author{Cong-Bang TRANG}
\address{Faculty of Fundamental Science, Industrial University of Ho Chi Minh City, 12 Nguyen Van Bao, Ward 4, Dist. Go Vap, Ho Chi Minh City, Viet Nam}
\email{\rd trangcongbang@iuh.edu.vn}

\author{Hoang-Hung Vo$^{*}$}
\address{Faculty of Mathematics and Applications, Saigon University, 273 An Duong Vuong st., Ward 3, Dist. 5, Ho Chi Minh City, Viet Nam}
\email{\rd vhhung@sgu.edu.vn}

\date{\today}

\begin{abstract}
 	This paper is concerned with the spatio-temporal dynamics of an age-structured reaction-diffusion system of KPP-epidemic type (SIS), subject to Neumann boundary conditions and incorporating $L^1$ blow-up type death rate. We first establish the existence of time dependent solutions using age-structured semigroup theory. Afterward, the basic reproduction number $\mathcal{R}_0$ is derived by linearizing the system around the disease-free equilibrium state. In the case $\mathcal{R}_0<1$,  the existence, uniqueness and stability of disease-free equilibrium are shown by using $\omega$-limit set approach of Langlais \cite{langlais_large_1988}, combined with the technique developed in recent works of Zhao et al. \cite{zhao_spatiotemporal_2023} and Ducrot et al. \cite{ducrot_age-structured_2024}. We highlight that the absence of a general comparison principle for the age-structured SIS-model and non-separable variable mortality rate prevent the direct application of the semi-flow technique developed in \cite{ducrot_age-structured_2024} to study the long time dynamics. To overcome this difficulty, sub- and super-solutions are constructed from two distinct monotone systems to estimate the solution of our model. In particular, it is  challenging to establish the positive co-existence steady states as $\mathcal{R}_0>1$ for the age-structure models by developing the using of the Leray–Schauder topological degree  technique in Conti et al. \cite{conti_asymptotic_2005}, Tavares et al. \cite{tavares_existence_2011} and global bifurcation technique of Walker \cite{walker_positive_2011}. Due to the lack of uniqueness of the endemic equilibrium, we partially characterize the spatio-temporal dynamics of our SIS disease transmission model to confirm the persistence of infected individuals in the population. Finally, we investigate the long time stability of the unique endemic equilibrium in a special case of our model, where we reduce the effect KPP net growth rate of the susceptible population.   We resolve this by re-evaluating the eigenvalue problem for the age-structured model.  We believe that our approach in this work can be applied to deeper study  the spatio-temporal dynamics of epidemic or ecological reaction-diffusion systems incorporating with age-structure arising in applied science or in the real world proposed in \cite{terracini_uniform_2016,noris_uniform_2010,conti_asymptotic_2005,tavares_existence_2011,deng_pulsating_2024, soave_anisotropic_2023,walker_positive_2011}


%
\end{abstract}




\keywords{epidemic system, long-time behavior, age-structure, spectral theory, Leray-Schauder topological degree, $\omega$-limit set}

\maketitle

\tableofcontents

\section{\bf Introduction}

Our current paper focuses on spatio-temporal dynamics of an age-dependent transmission disease model with spatial diffusion. The model takes the form of an age-structured heterogeneous reaction-diffusion system  as follows
\begin{align}\label{eq:main}
\begin{split}
\left\{\begin{array}{lllll}
u_t + u_{a}=d_u\Delta_N u+m(x)u-n(x)u^2
-\dfrac{p(x)uv}{u+v}+q(x)v-\mu(x,a)u, \\
v_t + v_{a}=d_v\Delta_N v+
\dfrac{p(x)uv}{u+v}-q(x)v-\mu(x,a)v,\\
u(t,x,0) = \displaystyle\int_0^{A} \beta(x,a) u(t,x,a)da,\\v(t,x,0) = \displaystyle\int_0^{A} \beta(x,a) v(t,x,a)da,\\
\dfrac{\partial u}{\partial \textbf{n}}=\dfrac{\partial v}{\partial \textbf{n}}=0,\\
u(0,x,a) = u_0(x,a),~
v(0,x,a) = v_0(x,a),
\end{array}\right.
\end{split}
\end{align}
where $t > 0$ denotes time, $x \in \O$ represents the spatial position, $\O\subset \R^N$ is a bounded domain with smooth boundary $\partial \O$,  $d_u>0$, $d_v>0$ are diffusion constants,  $a\in [0,A]$ denotes the physiological age, $0<A<\infty$ is the maximal age of the species, $m=m(x)$ is the intrinsic growth rate, $n=n(x)$ with $\dfrac{m(x)}{n(x)}$ is the carrying capacity of the environment, $p=p(x)$ is the transmission rate, $q=q(x)$ denote the recovery rate of infectious individuals and $\beta=\beta(x,a),~\mu=\mu(x,a)$ are the natural birth rate and morality rate of species, respectively. 

System \eqref{eq:main} takes the form of a susceptible-infected-susceptible (SIS) model, first proposed by Allen, Bolker, Lou, and Neval \cite{allen_asymptotic_2008}, who analyzed the impact of spatial heterogeneity of environment and movement of the species on the persistence and extinction of a disease via SIS epidemic reaction-diffusion model, which reads as follows	
\begin{align*}
\begin{split}
\left\{\begin{array}{lllll}
S_t =d_S\Delta_N S
-\dfrac{\beta(x)SI}{S+I}+\gamma(x)I,&t>0,~x\in\O, \\
I_t =d_I\Delta_N S+
\dfrac{\beta(x)SI}{S+I}-\gamma(x)I,&t>0,~x\in \O, \\
\dfrac{\partial S}{\partial \textbf{n}}=\dfrac{\partial I}{\partial \textbf{n}}=0,&t>0,~x,\in\partial \O,\\ 
S(0,x) = S_0(x),~
I(0,x) = I_0(x),&x\in \O,
\end{array}\right.
\end{split}
\end{align*}
where $S,~I$ denote for the densities of susceptible and infected populations, $d_S,~d_I>0$ are diffusion constants measuring the mobility of susceptible and infected groups, respectively. $\beta,~\gamma$ are positive function on $\overline{\O}$ presenting the disease transmission and recovery rate, respectively. The literature established the existence, uniqueness of disease-free equilibrium $(DFE)$ and endemic equilibrium $(EE)$. Then, the authors demonstrated the stability of $(DFE)$ and, in addition, the asymptotic behavior of the unique endemic equilibrium when $d_S\to 0$. They further proposed open questions concerning the potential stability of the endemic equilibrium $(EE)$, as its uniqueness had already been established. In this work, we shall focus on investigation of spatial and temporal dynamics of system \eqref{eq:main}, which focus on the  the existence of the disease-free equilibrium $(DFE)$ and the co-existence  endemic equilibrium $(EE)$ via the age-structured spectral theory. The long-time stability is further investigated in the special case reducing logistic term in the first equation \eqref{eq:main} by by using $\omega$-limit set approach of Langlais \cite{langlais_large_1988}, combined with the technique developed in recent works of Zhao et al. \cite{zhao_spatiotemporal_2023} and Ducrot et al. \cite{ducrot_age-structured_2024}.

In epidemiology, such a model describes the evolution of disease transmission and recovery within a spatially distributed population. Here, $u=u(t,x,a)$ and $v=v(t,x,a)$ present the density of a susceptible population and an infectious population, respectively, at location $x\in \overline{\O}$, time $t>0$ and the age $a\in [0,A]$. For convenience, we put $\mathcal{O}_a:=\O\times (0,a)$ for any $a\in [0,A]$ and
\begin{align*}
\mu_{\max}(a):=\max\limits_{x\in \O}\mu(x,a)\text{  and }\mu_{\min}(a):=\min\limits_{x\in \O}\mu(x,a),\\
\beta_{\max}(a):=\max\limits_{x\in \O}\beta(x,a)\text{  and }\beta_{\min}(a):=\min\limits_{x\in \O}\beta(x,a).
\end{align*} 
and with $f:\overline{\mathcal{O}_A}\rightarrow\R$, recall
\begin{align*}
\supp(f)=\{(x,a)\in \overline{\mathcal{O}_A}:~f(x,a)\neq 0 \}.
\end{align*}

Throughout this paper, we shall assume that
\begin{enumerate}[label=(A\arabic*)]  
\item \label{cond:cond1}$m$, $n$, $p$, $q$ are in $C^{2}(\overline{\O})$. 

\item \label{cond:cond2} Suppose $n>0$, $p\geq 0$, $p\not\equiv 0$ and $q>0$. Also, $\overline{m}:=\dfrac{1}{|\O|}\displaystyle\int_{\O}m dx\geq 0$ and $m$ is non-trivial.



\item \label{cond:cond3} $\mu=\mu(x,a)$ is a positive $C^{2,1}$ function on $\overline{\O}\times[0,A)$. 


\item \label{cond:cond4} $\mu$ satisfies
\begin{align*} 
\int_0^a \mu_{\max}(k)dk <\infty,~\forall a\in (0,A),~\int_0^A \mu_{\min}(k)dk = \infty.
\end{align*}
and
\begin{align*}
a_0\mapsto \sup_{a\in[0,a_0]}\int_{\O}\mu^2(x,a)da \text{ is continuous with respect to $a_0\in[0,A]$}
\end{align*}

\item \label{cond:cond5}  $\beta=\beta(x,a)$ is a non-negative $C^{2,1}$ function on $\overline{\mathcal{O}_A}$, and
there exists $A_0,~A_1,~A_2 \in (0,A)$, $A_0<A_1<A_2$ such that 
\begin{align*}
\beta_{\min}(a)>0,~\forall a\in (A_0,A_1) \text{ and }\supp(\beta)\subset\overline{\O}\times (A_0,A_2],
\end{align*}
and
\begin{align*}
\displaystyle\int_0^A\beta_{\max}(a)e^{-\displaystyle\int_0^a\mu_{\min}(k)dk}da\leq 1.
\end{align*}

  
\end{enumerate}

To ensure the non-triviality of $A_2$ in \ref{cond:cond5}, we may further assume that $\beta_{\min}>0$ on $(A_2-\epsilon,A_2)$ some sufficiently small $\epsilon>0$. 

Under condition \ref{cond:cond2}, if $m$ is a non-constant function, it is not necessarily positive over the domain $\overline{\O}$. This implies that some regions favor the population, while others do not. The last three conditions follow the spirit of Guo and Chan \cite{guo_on_1994}, Delgado, Molina-Becerra and Suárez \cite{delgado_nonlinear_2008} to develop the spectral theory in the age-structured model. A typical example for the condition \ref{cond:cond4} can be thought of is an increasing function with respect to the age variable $a$, which is reasonable since the morality rate should increase as an individual ages. In addition, the $L^1$ blow-up property of the morality rate $\mu$  ensures no individual of the species can live beyond the maximum age $A$. On the other hand, in the condition \ref{cond:cond5}, the birth rate $\beta$ is restricted to younger individuals of the species, meaning it vanishes beyond a certain age. This is biologically consistent, as the older individuals are unable to reproduce the new generation. Furthermore, the last condition in \ref{cond:cond5} is inspired by the population equilibrium condition for demographic function in Ducrot \cite{ducrot_travelling_2007}, see also Gurtin and Maccamy \cite[Theorem 6]{gurtin_non-linear_1974}, Anita \cite[Theorem 2.3.3]{anita_2000}. This prevents unbounded population growth driven by reproduction. For historical reasons, we define
\begin{align*}
\pi(a)=e^{-\displaystyle\int_0^a\mu_{\min}(k)dk},~a\in [0,A) \text{ and }\pi(A)=0.
\end{align*}
Clearly, $\pi(a)\in C([0,A],\R)\cap C^{1}((0,A),\R)$ and $0\leq \pi(a)\leq 1$ for any $a\in [0,A]$. Define $L=\pi^{-1}$ on $[0,A)$. One can check $L\rightarrow \infty$ as $a\rightarrow A$.


Over the past a few decades, the age-structured model has been topic of intensive research. In particular, in their celebrated work, Gurtin and Maccamy \cite{gurtin_non-linear_1974} have first proposed a nonlinear model for population dynamics, in which the birth and death rates are age specific and depend upon the total population size and provide a  necessary and sufficient condition for an equilibrium age-distribution. After that, Langlais \cite{langlais_large_1988} was the first to analyze the long-time dynamics of a nonlinear age-dependent population model with spatial diffusion by using $\omega$-limits set approach in $L^2$. The author demonstrated that, as $t\rightarrow \infty$, the solution either tends to $0$ or stabilizes to a nontrivial stationary solution with a separable density-dependent death rate in bounded domain. 

Alternatively, in a pioneering work, Walker \cite{walker_coexistence_2010} first investigated an age-structured predator–prey system incorporating spatial diffusion and Holling –Tanner-type nonlinearities as follows
\begin{align*}
\left\{\begin{array}{llll}
u_t+u_a -d_1\Delta_D u=-\mu_1(x,u,v)u,&t>0,~x\in \O,~a\in (0,A),\\
v_t+v_a -d_2\Delta_D v=-\mu_2(x,u,v)u,&t>0,~x\in \O,~a\in (0,A),\\
u(t,x,0)=\displaystyle\int_0^A\beta_1(a,u,v)u(t,x,a)da,\\v(t,x,0)=\displaystyle\int_0^A\beta_2(a,u,v)v(t,x,a)da
\end{array}\right.
\end{align*} 
where $\mu_j$, $\beta_j$, $j=1,2$ are   the death and birth rates, respectively, depending nonlinearly on the predator $v$ and on the prey $u$, $\Omega \subset \R^N$ is a bounded and smooth domain and $A\in (0,\infty]$ is the maximal age. The author focused on steady-state solutions, that is, time-independent non-negative solutions of the preceding system subjected by Dirichlet boundary condition. Regarding the intensity of the fertility of the predator as a bifurcation parameter, a branch of positive coexistence steady states is established via the bifurcation from the marginal steady state without predators, owing to the compactness of the parabolic Laplacian semigroup. An analogous result is observed when the fertility of the prey varies. Afterward, in a deep work, Walker \cite{walker_positive_2011} focused on describing the structure of the set of positive solutions with respect to two parameters measuring the intensities of the fertility of the species, especially establishing co-existence steady-states by using global bifurcation techniques of an age-structured predator-prey model with Dirichlet boundary condition read as follows
\begin{align*}
\left\{\begin{array}{llll}
u_a -\Delta_D u=-(\alpha_1u + \alpha_2 v)u,&t>0,~x\in \O,~a\in (0,A),\\
v_a -\Delta_D v=-(\beta_1u - \beta_2 v)v,&t>0,~x\in \O,~a\in (0,A),\\
u(t,x,0)=\displaystyle\int_0^A\eta\beta_1(a)u(t,x,a)da,\\v(t,x,0)=\displaystyle\int_0^A\chi\beta_2(a)v(t,x,a)da.
\end{array}\right.
\end{align*} 
Such equations arise as steady-state equations in an age-structured predator-prey model with diffusion.  solutions which are nonnegative and nontrivial in both components. For additional studies on bifurcation analysis, we refer the reader to  \cite{walker_global_2010,walker_nonlocal_2011,walker_principle_2022,walker_well-posedness_2023,walker_age-dependent_2010} for age-structured model including the well-posedness, and the long-time exponential stability of the equilibrium solution  under a properly chosen initial condition; Nirenberg \cite[Chapter 3]{nirenberg_nonlinear_2001} and the recent work by Li and Terracini \cite{li_bifurcation_2024} for reaction-diffusion models. Research for bifurcation examines the existence, uniqueness and topological structure of steady state under changes in the qualitative features or parameters. This serves as a powerful tool for investigating the global dynamics of the age-structured reaction-diffusion models. 



On the other hand, in a rigorous analysis, Kang and Ruan \cite{kang_principal_2022,kang_principal_2023} and Ducrot, Kang and Ruan \cite{ducrot_age_2024} studied the principal spectral theory for age-structured model with nonlocal diffusion. The theory is a powerful tool to deal with many other important problems in the field of age-structured reaction-diffusion equations. In particular, this serves as the first important step toward studying the global dynamics of the age-structured model with non-local diffusion. Indeed,  Ducrot, Kang and Ruan \cite{ducrot_age-structured_2024} investigated an age-structured model with nonlocal diffusion of Dirichlet type and a monotone nonlinearity in the birth rate, given by
\begin{align*}
\left\{\begin{array}{llll}
u_t+u_a = D\left[\displaystyle\int_{\O} J(x-y)u(t,y,a)dy-u(t,x,a)\right] - \mu(x,a)u,\\
u(t,x,0)=f\left(\displaystyle\int_0^A\beta(x,a)u(t,x,a)da\right),\\
u(0,x,a)=u_0(x,a).
\end{array}\right.
\end{align*}
where $t > 0$ denotes time, $x \in \O$ represents the spatial position, $\O\subset \R^N$ is a bounded domain with smooth boundary $\partial \O$, $a\in (0,A)$ denotes the physiological age, $0<A<\infty$ is the maximum age of an individual, $D>0$ is the diffusion rate, $f$ is a monotone type nonlinearity describing the birth rate of the population, $\beta$, $\mu$ denote the age-specific birth rate and death rate, respectively, $u = u(t,x,a)$ presents the population density, $J \in C^1(\R^N)$ is a non-local diffusion kernel is assumed to satisfy
\begin{center}
  $J\geq 0$, $J(0)>0$, $\displaystyle\int_{\R^N}J(x)dx=1$ and $\supp(J)\subset B(0,r)\subset \R^N$ for some $r>0$.
\end{center}
This age-structured model describes population movement in a non-local sense, incorporating the natural death rate and the nonlinear birth rate. By applying the spectral theory, the authors established the existence, uniqueness and regularity of a positive equilibrium via the classical super-sub-solutions method for the age-structured model. They further emphasize that, due to the lack of compactness and regularity in the non-local diffusion, the well-known fixed point theorems, such as Schauder theorem, and the bifurcation methods proposed by Walker \cite{walker_global_2010,walker_nonlocal_2011,walker_positive_2011,walker_coexistence_2010}, cannot be directly applied in their work. Afterward, the existence, uniqueness and positiveness of the original time-dependent non-local age-structured model are proven using the semi-group theory. The stability of the steady state is also provided via the semi-flow technique. In addition, the asymptotic behavior of the steady state in terms of diffusion rate and diffusion range is investigated in this work.




To deeper understand the influence of the age structure on the population dynamics, which simplifies epidemic model,   Ducrot \cite{ducrot_travelling_2007} also  investigated traveling wave solution for a simplified SI model with age-structure.
The author further assumed the birth and morality rate to equilibrate the population to prevent the population from going extinct and from exploding as time increases. Mathematically, the condition can be described by the demographic function $\displaystyle\int_0^A\beta(a)e^{-\displaystyle\int_0^a \mu(s)ds}da = 1$. Then, Ducrot et al. \cite{ducrot_travelling_2007,ducrot_travelling_2009, ducrot_integrated_2021,ducrot_age-structured_2024} tactfully used the Schauder fixed point theorem and the parabolic comparison principle to prove the existence of traveling wave for the age-structured epidemic model associated with an implicit admissible speed $c>c^{*}$.
%
Afterward,    Ducrot, Magal, and Ruan \cite{ducrot_travelling_2010} provided sufficient conditions that ensure the existence of traveling wave solutions for the age-structured  epidemic population of multigroups. Each group is further divided into two subclasses: susceptible and infected. A susceptible individual can become infected through crisscross transmission by direct contact with infective individuals from any group, with the transmission process depending on the disease age of the infections. As an application, the authors numerically investigate a two-group model, illustrating its relevance to the crisscross transmission of feline immunodeficiency virus (FIV) and certain sexually transmitted diseases. Recently, in a deep work,  Deng and Ducrot \cite{deng_pulsating_2024} studied the existence of pulsating traveling waves for the classical endemic $(SI)$ system, which is further well-known as the Kermack–McKendrick epidemic model with Fickian diffusion. The authors first provided a necessary conditions to ensure the existence of an admissible speed $c^*(e)$ for each unit direction $e$. Subsequently, the existence of the pulsating wave was  established for wave speed $c>c^*(e)$ in each propagating unit direction $e$. Specifically, it was shown that these wave fronts are positive, globally bounded and enjoy certain limiting properties. In addition, the wave front associated with the infected population exhibits exponential decay.


Collaterally, Bekkal~Brikci, Clairambault, Ribba and Perthame \cite{bekkal_brikci_age-and-cyclin-structured_2008} used different way to study the age-structured model. In particular, the authors neglected the spatial position and worked with the phenotypic trait of the species of the age $a$, at the time $t$. This is an alternative way to understand how the age-structure affects the population, see also  \cite{almeida_asymptotic_2022, nordmann_dynamics_2021, nordmann_dynamics_2018} for more investigations about this subject. Besides,  Ferrrière and Tran \cite{ferriere_stochastic_2009} studied the age-structure by examining the mutation of general traits in species. They showed that a random process converges to the solution of a Gurtin-McCamy type PDE. The result on the limiting PDE and large deviation techniques in Freidlin-Wentzell provides estimates of the extinction time and a better understanding of the long-time behavior time of the stochastic process, see also \cite{roget_long-time_2019}. Further investigations on the age-structured model are referred to \cite{chekroun_global_2020,ducrot_differential_2022, ducrot_travelling_nodate,magal_monotone_2019, ducrot_integrated_2021, park_optimal_1998, tian_traveling_2022a, tian_traveling_2022b}. Beside that,  Zhao and Ruan \cite{zhao_spatiotemporal_2023} proposed an epidemic model (SIS) with fractional diffusion, subject to Neumann boundary conditions and incorporating the logistic source as follows
\begin{align*}
\left\{\begin{array}{lllll}
u_t+ (-d_u\Delta_N)^{s_1}u = a(x)-b(x)u^2-\dfrac{p(x)uv}{u+v}+q(x)v,&~(t,x)\in \R^+\times \O\\
v_t+ (-d_v\Delta_N)^{s_2}v =\dfrac{p(x)uv}{u+v}-q(x)v,&~(t,x)\in \R^+\times \O\\
u(0,x)=u_0(x),~v(0,x)=v_0(x),
\end{array}\right.
\end{align*}
where $u,~v$ represents the densities of susceptible and infected populations, $0<s_i<1$ for $i=1,~2$ are the fractional powers of the Laplace operator $-\Delta_N$, $(-d\Delta_N)^{s}$ is the Neumann fractional Laplace operator for a diffusion constant $d=d_u,d_v$ and a fractional power $s=s_1,s_2$, respectively. In this work, the authors  explained that the human mobility patterns exhibit scale-free, non-local dynamics with heavy-tailed distributions. Such movements are modeled by Lévy flights, which are often mathematically represented via the  fractional Laplacian $(-d\Delta_N)^{s}$, see Metzler and Klafter \cite{metzler_random_2000}. This approach fundamentally differs from the classical one, in which human mobility is described by Brownian motion associated with the standard Laplacian operator. As their main goals, Zhao and Ruan focused on the spatio-temporal dynamics via the existence and stability of the disease-free equilibrium $(DFE)$ and the endemic equilibrium $(EE)$. The uniqueness and stability of $(DFE)$ and the asymptotic behavior of $(EE)$ as $d_u\to 0,$ $d_u\to\infty$ and $d_v\to \infty$ are also established via in-depth studies of the fractional Laplacian spectral theory. In addition, in a special case, the stability of $(EE)$ is established. For further investigations of SIS endemic models, we refer the reader to \cite{li_analysis_2017,peng_reactiondiffusion_2012,ge_sis_2015,wu_asymptotic_2016,li_varying_2017}.

For relevant works in the spatial dynamics of of a biological system,  Conti, Terracini and Verzini \cite{conti_asymptotic_2005} and Soave and Terracini \cite{soave_anisotropic_2023} studied the system of $k$-partial differential equation as follows
\begin{align*}
\left\{\begin{array}{lllll}
-\Delta u_i&= -\kappa u_i\displaystyle\sum_{i\neq j}a_{ij}u_j+f(x,u_i),&x\in \O,\\
u_i(x)&=\phi_i(x),&x\in \partial \O.
\end{array}\right.
\end{align*}
where $i=1,...,k$, $u_i$ represents the population density of the $i$th species, $f_i=f_i(x,u_i)$ prescribes internal dynamic of $u_i$; the positive constants $\kappa a_{ij}$ determine the interaction between the population $u_i$ and $u_j$, which is possibly asymmetric. Besides, the boundary data $\phi_i$ are positive $W^{1,\infty}(\partial \O)$-functions with disjoint supports, namely $\phi_i \phi_j=0$ for $i\neq j$, almost everywhere on $\partial \O$. This system governs the steady state of $k$-competing species coexisting in the same limited area $\O$. In this work,  the authors utilized the topological degree theory to demonstrate the existence of a solution for the system. Subsequently, the asymptotic behavior of the positive solutions was investigated as the competition rate $\kappa$ tends to infinity.  Beside that, Noris, Terracini, Tavares and Verzini \cite{noris_uniform_2010} investigated the Gross–Pitaevskii system or nonlinear Schrödinger equations with the competition parameter $\beta>0$. The authors first showed that the uniform boundedness of the solution leads to the $C^{0,\alpha}$-Hölder uniform boundedness for $\alpha \in (0,1)$ and, then, as $\beta\rightarrow \infty$, the limiting profile is shown to be Lipschitz-continuous. Subsequently, Tavares, Terracini, Verzini, and Weth \cite{tavares_existence_2011} investigated the existence and non-existence of non-trivial solutions to the cubic Schrödinger system associated with the coefficient matrix $\beta$, employing topological degree theory. Afterward, Terracini, Verzini, and Zilio \cite{terracini_uniform_2016} studied the fractional Gross–Pitaevskii system with the order $s=\dfrac{1}{2}$. Similar to the work in \cite{noris_uniform_2010}, they established that the uniform boundedness of the solution leads to $C^{0,\alpha}$-Hölder uniform boundedness for $\alpha \in \left(0, \dfrac{1}{2}\right)$, provided that $\beta$ is sufficiently large.




Coming back to our system \eqref{eq:main}, as previously mentioned, the aim of the present paper is to investigate the spatio-temporal dynamics associated with the spatially heterogeneous reaction-diffusion system \eqref{eq:main}. To begin the study, the \textit{basic reproduction number} $\mathcal{R}_0$ is developed thanks to approaches in Chekroun and Kuniya \cite{chekroun_global_2020}, Yang, Gon and Sun \cite{yang_asymptotical_2023}. The main difficulty lies in the non-self-adjoint nature of the age-structured operator, which complicates the calculation of $\mathcal{R}_0$ and its comparison with the principal eigenvalue of these operators, as we shall discuss later in this work. This property distinguishes our study from the existing literature, such as \cite{allen_asymptotic_2008,zhao_spatiotemporal_2023}, where the operator is self-adjoint. After that, we develop the technique of using the topological degree theory, based on  \cite{nirenberg_nonlinear_2001,fonseca_degree_1995,ambrosio_functions_2000}, to establish the co-existence of endemic equilibrium $(EE)$ for the system \eqref{eq:main}. The key point is to first establish existence for a simplified system, and then use the Leray–Schauder  degree to determine the existence of a solution to the original system via the homotopy invariance property. To our knowledge, this is the first application of the topological degree theory to verify the existence of solutions for an age-structured model with a quotient-type nonlinear term. Furthermore, due to the complexity of the quotient-type nonlinearity in the system, we emphasize that it is not feasible to apply Schauder or Banach fixed theorem to construct a $(EE)$-solution. This research marks the first significant application of topological degree theory to study the long-time behavior for the age-structured disease transmission model. On other hand, the existence, uniqueness of disease-free equilibrium $(DFE)$ is provided thanks to the classical super-sub-solution method and the age-structured comparison principle between these super-sub-solutions. 

For the convenience of the reader, we define the endemic equilibrium system as follows
\begin{align}\label{eq:main1}
\begin{split}
\left\{\begin{array}{llll}
 u_{a}=d_u\Delta_Nu +m(x)u-n(x)u^2
-\dfrac{p(x)uv}{u+v}+q(x)v-\mu(x,a)u,&(x,a)\in\mathcal{O}_A, \\
 v_{a}=d_v\Delta_Nv+
\dfrac{p(x)uv}{u+v}-q(x)v-\mu(x,a)v,&(x,a)\in\mathcal{O}_A,\\
u(x,0) = \displaystyle\int_0^{A} \beta(x,a) u(x,a)da,~v(x,0) = \displaystyle\int_0^{A} \beta(x,a) v(x,a)da,&x\in \O.
\end{array}\right.
\end{split}
\end{align}

 Now, let us state our main results. The first result is about the existence of the equilibrium state  of the system  \eqref{eq:main} as follows
 
\begin{theorem} \label{main:theo1}
Assume \ref{cond:cond1} to \ref{cond:cond5} hold. Let $\lambda_{d_u,-m+\mu}$, $\lambda_{d_u,-m+p+\mu}$ be the principal eigenvalue of $\mathcal{A}_{d_u,-m+\mu}$, $\mathcal{A}_{d_u,-m+p+\mu}$, respectively. The following statements hold
\begin{enumerate}[label=(\roman*)]
\item \label{DFE} If $\lambda_{d_u,-m+\mu}>0$ and $\mathcal{R}_0\leq 1$, then \eqref{eq:main1} has no positive solution and it only has one semi-positive solution $(u_1,0)$, where $u_1$ is the unique positive solution of 
\begin{align}\label{eq:nodisease}
\begin{split}
\left\{\begin{array}{llll}
 u_{a}=d_u\Delta_Nu +m(x)u-n(x)u^2
-\mu(x,a)u,&(x,a)\in\mathcal{O}_A, \\
u(x,0) = \displaystyle\int_0^{A} \beta(x,a) u(x,a)da,&x\in \O.
\end{array}\right.
\end{split}
\end{align}
\item \label{EE} If $\lambda_{d_u,-m+p+\mu}>0$ and $\mathcal{R}_0>1$, then \eqref{eq:main1} admits a positive endemic steady state $(u,v)$.
\end{enumerate}
\end{theorem}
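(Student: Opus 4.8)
The plan is to prove both parts of Theorem~\ref{main:theo1} by combining the age-structured spectral theory developed earlier with a super-sub-solution scheme for part~\ref{DFE} and a topological-degree argument for part~\ref{EE}.

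For part~\ref{DFE}, I would first dispose of the scalar problem~\eqref{eq:nodisease}. The condition $\lambda_{d_u,-m+\mu}>0$ should be exactly the threshold guaranteeing that the linearization at $0$ is unstable, so that the logistic-type equation admits a unique positive steady state $u_1$; existence and uniqueness follow from the classical super-sub-solution method together with the age-structured comparison principle asserted in the introduction. Next, assuming $(u,v)$ is any nonnegative solution of~\eqref{eq:main1} with $v\not\equiv 0$, I would add the two equations so that the quotient terms $\pm p(x)uv/(u+v)$ and the recovery terms $\pm q(x)v$ cancel, yielding a single equation for $w=u+v$ of logistic type, and compare $w$ with $u_1$. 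The decisive step is to show $v\equiv 0$ when $\mathcal{R}_0\le 1$: here I would linearize the $v$-equation, bound $p(x)uv/(u+v)\le p(x)v$ pointwise (using $u/(u+v)\le 1$), and observe that $\mathcal{R}_0\le 1$ forces the principal eigenvalue governing the $v$-dynamics to be nonnegative, so that the only nonnegative solution of the resulting age-structured eigenvalue inequality is $v\equiv 0$. Once $v\equiv 0$, the system collapses to~\eqref{eq:nodisease} and $u=u_1$, giving the unique semi-positive solution $(u_1,0)$ and no positive solution.

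For part~\ref{EE}, I would follow the Leray--Schauder strategy announced in the introduction. First, reformulate~\eqref{eq:main1} as a fixed-point equation $T(u,v)=(u,v)$ on a suitable ordered cone in a product space of age-structured functions, where $T$ inverts the age-transport operator $\partial_a - d\Delta_N + \mu$ together with the renewal boundary condition at $a=0$; the compactness of the resolvent of the parabolic Neumann Laplacian makes $T$ completely continuous. Then I would construct a homotopy connecting~\eqref{eq:main1} to a simplified, more tractable system (for instance replacing the quotient nonlinearity by a globally Lipschitz regularization, or removing the logistic competition), for which existence of a positive solution is already known. The core of the argument is to establish uniform a~priori bounds on all positive solutions along the homotopy, so that the degree is computed on a fixed large ball and is homotopy-invariant; the instability condition $\lambda_{d_u,-m+p+\mu}>0$ and $\mathcal{R}_0>1$ should be used to rule out the trivial and semi-trivial fixed points as solutions on the boundary of a small ball, yielding a nonzero degree on the annular region and hence a genuinely positive fixed point $(u,v)$.

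\textbf{The main obstacle} I anticipate lies in part~\ref{EE}, specifically in the a~priori estimates and in excluding boundary solutions along the homotopy. The quotient nonlinearity $p(x)uv/(u+v)$ is only Lipschitz away from the origin and degenerates as $(u,v)\to(0,0)$, so bounding solutions uniformly from below away from the semi-trivial branch $(u_1,0)$ is delicate; one must show infected density cannot collapse precisely when $\mathcal{R}_0>1$. I would handle this by pairing the $v$-equation against the principal eigenfunction of $\mathcal{A}_{d_u,-m+p+\mu}$ and using $\mathcal{R}_0>1$ (equivalently the negativity of the relevant principal eigenvalue) to derive a strict positive lower bound on $\int v$, thereby keeping the degree computation away from the semi-trivial set. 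Verifying that the degenerate quotient does not destroy the compactness or the homotopy invariance—particularly matching the regularization at the origin with the renewal boundary condition—will require the most care.
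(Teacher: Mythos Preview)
For part~\ref{DFE}, your core argument---bound $p(x)uv/(u+v)\le p(x)v$ and use $\mathcal{R}_0\le 1$ (equivalently $\lambda_{d_v,-p+q+\mu}\le 0$) to force $v\equiv 0$ by comparison with scalar multiples of the principal eigenfunction---is exactly what the paper does. However, your intermediate step of adding the two equations to obtain a logistic equation for $w=u+v$ does not work: the diffusion coefficients $d_u,d_v$ are in general different and the term $m(x)u-n(x)u^2$ depends on $u$ alone, so the sum is not a closed equation in $w$. Fortunately this step is unnecessary; once $v\equiv 0$ the first equation of~\eqref{eq:main1} reduces directly to~\eqref{eq:nodisease} and uniqueness of $u_1$ finishes the job.

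For part~\ref{EE}, the overall Leray--Schauder framework is the paper's, but the implementation differs substantially and the paper's version sidesteps precisely the obstacle you flag. Rather than working on an annulus and seeking a priori upper and lower bounds along the homotopy, the paper constructs an explicit ordered interval $\mathcal{C}=\{(\underline{u}_l,\underline{v}_l)<(u,v)<(\overline{u}_r,\overline{v}_r)\}$ in $\mathcal{X}_0\times\mathcal{X}_0$: the sub-pair is built from the unique solution $(u_l,v_l)$ of the auxiliary \emph{decoupled} system~\eqref{eq:L} (replace $f^c$ by $f^l(x,u)=[m-p-nu]u$, solve for $u_l$ using $\lambda_{d_u,-m+p+\mu}>0$, then solve the $v$-equation with $u=u_l$ fixed using $\lambda_{d_v,-p+q+\mu}>0$), and the super-pair is $(MY_\alpha,2KMY_\alpha)$ from Lemma~\ref{Lemma:super}. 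Because every point of $\overline{\mathcal{C}}$ already lies above the strictly positive pair $(\underline{u}_l,\underline{v}_l)$, the quotient nonlinearity never degenerates on $\overline{\mathcal{C}}$ and the semi-trivial branch is automatically outside $\overline{\mathcal{C}}$; no separate lower-bound argument is needed. The homotopy is $(1-t)f^l+tf^c$ in the first component with $g$ fixed in the second; strictness of the sub/super pair keeps fixed points off $\partial\mathcal{C}$, and at $t=0$ the degree is the local index at the unique, isolated fixed point $(u_l,v_l)$, computed to be $\pm 1$ via the (lower-triangular) linearization. Your annulus approach might be pushed through, but the uniform lower bound on $v$ along a general homotopy is genuinely delicate here, and your proposed pairing against the eigenfunction of $\mathcal{A}_{d_u,-m+p+\mu}$ targets the wrong operator: that eigenvalue controls the $u$-equation, whereas the $v$-equation is governed by $\lambda_{d_v,-p+q+\mu}$.
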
 
The formula for the age-structured operator  $\mathcal{A}_{d_u,\mu^*}$ will be provided in Section \ref{sec 2.1}, along with a proper choice of $\mu^*$. The significant challenge in our work is to use the age-structured spectral theory and the topological degree theory to prove the positive co-existence of endemic steady state $(u,v)$. The principal eigenvalues $\lambda_{d_u,-m+\mu}$ and $\lambda_{d_u,-m+p+\mu}$ are both employed to construct the sub-solution of \eqref{eq:main1}, ensuring the nontriviality of its solution. In addition, a super-solution is determined to guarantee that the solution does not blow up before reaching the maximal age  $A$. We emphasize that, due to the quotient-type nonlinear term, we are unable to employ other well-known fixed point theorems, such as the Banach contraction principle or Schauder fixed point theorem, to establish the positive co-existence steady state, which is a counterpart of using global bifurcation approach of Walker \cite{walker_positive_2011} to prove the positive co-existence of steady states.

Now, we further characterize  the long time behavior for  system \eqref{eq:main}.

\begin{theorem}\label{Theo:2}
Assume \ref{cond:cond1} to \ref{cond:cond5} hold. Suppose further that $(u_0, v_0) \in C(\overline{\mathcal{O}_A})\times C(\overline{\mathcal{O}_A})$ with $u_0,~v_0\geq 0$, $u_0,~v_0\not\equiv 0$ and $\supp(u_0),~\supp(v_0)\subset \O\times [0,A)$. Let $(u,v)=(u(t;u_0,v_0),v(t;u_0,v_0))$ be a solution of \eqref{eq:main} starting by the initial condition $(u_0,v_0)$. Then, for any $A_2<A_3<A$, the following statements hold

\begin{enumerate}[label=(\roman*)]
\item \label{sta:DFE} If $\lambda_{d_u,-m+\mu}>0$ and  $\mathcal{R}_0<1$, one has \begin{align*}
||u(t;u_0,v_0)-u_1||_{L^{\infty}(\overline{\mathcal{O}_{A_3}})}\rightarrow 0,~||v(t;u_0,v_0)||_{L^{\infty}(\overline{\mathcal{O}_{A_3}})}\rightarrow 0.
\end{align*}
and
\begin{align*}
||u(t;u_0,v_0)-u_1||_{L^{2}(\mathcal{O}_A)}\rightarrow 0,~||v(t;u_0,v_0)||_{L^{\infty}(\mathcal{O}_A)}\rightarrow 0.
\end{align*}
\item \label{sta:EE} If $\lambda_{d_u,-m+\mu+p}>0$ and  $\mathcal{R}_0>1$, there exist $\varepsilon_1,~\varepsilon_2,~K_1,~K_2>0$ such that
\begin{align*}
\begin{array}{llll}
&0<\varepsilon_1< \liminf\limits_{t\rightarrow \infty}u(t;u_0,v_0)\leq \limsup\limits_{t\rightarrow \infty}u(t;u_0,v_0)\leq  K_1,\\
&0<\varepsilon_2< \liminf\limits_{t\rightarrow \infty}v(t;u_0,v_0)\leq \limsup\limits_{t\rightarrow \infty}v(t;u_0,v_0)\leq K_2,
\end{array} \text{ uniformly on $\overline{\mathcal{O}_{A_3}}$.}
\end{align*}
\end{enumerate}

\end{theorem}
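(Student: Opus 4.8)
The overall strategy is to identify the $\omega$-limit set of the orbit $t\mapsto(u(t;u_0,v_0),v(t;u_0,v_0))$ by comparison with two \emph{monotone} auxiliary systems, combining the $\omega$-limit technique of Langlais with the sandwiching idea of Ducrot et al.\ and Zhao--Ruan. The essential difficulty is that the quotient coupling $\tfrac{p(x)uv}{u+v}$ makes \eqref{eq:main} neither cooperative nor competitive, so no comparison principle applies to the full system. The remedy is the elementary monotonicity of $g(u,v):=\tfrac{uv}{u+v}$, which is nondecreasing in each variable on the positive cone and satisfies $0\le g(u,v)\le\min\{u,v\}$; these facts let me trap $(u,v)$ between solutions of two monotone systems whose large-time behavior is governed by the age-structured spectral theory of Section~\ref{sec 2.1}.

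First I would record a priori bounds showing the orbit is bounded and, by age-structured parabolic regularity away from $a=A$, precompact in $C(\overline{\mathcal{O}_{A_3}})$ for every $A_3<A$. The restriction $A_3<A$ is forced by the $L^1$ blow-up of $\mu$ at the maximal age in \ref{cond:cond4}, which precludes uniform control up to $a=A$. An upper bound for $u$ follows by discarding the loss term (since $g\ge0$) and comparing with the logistic problem \eqref{eq:nodisease}, whose unique positive steady state is $u_1$; an upper bound for $v$ follows from $g(u,v)\le v$, which makes $v$ a subsolution of the linear age-structured equation with net rate $p-q-\mu$ and the same birth kernel $\beta$.

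For \ref{sta:DFE} ($\mathcal{R}_0<1$), that linear supersolution problem for $v$ carries exactly the infection linearization at the disease-free state, so its decay is equivalent to $\mathcal{R}_0<1$; hence $v\to0$. The system is then asymptotically autonomous: as $v\to0$ the $u$-equation converges to \eqref{eq:nodisease}, and sandwiching $u$ between a super- and a sub-solution both tending to $u_1$ (the sub-solution constructed once $v$ is small, with existence and positivity of $u_1$ guaranteed by $\lambda_{d_u,-m+\mu}>0$ through Theorem~\ref{main:theo1}\ref{DFE}) yields $u\to u_1$ uniformly on $\overline{\mathcal{O}_{A_3}}$. The $L^2(\mathcal{O}_A)$ statement is then recovered by splitting the age interval at $A_3$ and using the weight $\pi(a)$ together with dominated convergence to absorb the contribution near $a=A$.

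For \ref{sta:EE} ($\mathcal{R}_0>1$) the upper bounds $K_1,K_2$ are obtained as above, and the heart of the proof --- the main obstacle --- is the uniform persistence $\varepsilon_1,\varepsilon_2>0$. Since $\mathcal{R}_0>1$ makes the disease-free state linearly unstable and $\lambda_{d_u,-m+\mu+p}>0$, I would use the positive subsolution of the stationary system \eqref{eq:main1} produced in the proof of Theorem~\ref{main:theo1}\ref{EE} as an eventual lower barrier for the orbit, after showing the orbit enters and cannot leave a region of the positive cone bounded away from $\{uv=0\}$. The delicate points are that $g$ degenerates as $(u,v)\to(0,0)$ and that $\mu(x,a)$ is non-separable, so the barrier must come from the re-evaluated age-structured eigenvalue problem rather than from separation of variables; moreover the lower bounds on $u$ and $v$ must be bootstrapped together, each feeding the other through the monotone sandwich, since neither persists in isolation. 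Because the endemic equilibrium need not be unique, only the $\liminf/\limsup$ persistence bounds --- not convergence --- can be claimed, and proving that the $\omega$-limit set stays uniformly away from the extinction boundary on $\overline{\mathcal{O}_{A_3}}$ is exactly where the absence of a comparison principle bites hardest and the two-monotone-system device is indispensable.
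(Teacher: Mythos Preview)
Your overall strategy is correct and matches the paper's: sandwich the non-monotone system between the monotone $\mathbf{F}^l$- and $\mathbf{F}^r$-systems via Proposition~\ref{prof:comparsys1}, handle $v$ first through the linear problem governed by $\lambda_{d_v,-p+q+\mu}$, and then close on $u$. Two points where your sketch diverges from the paper's execution are worth noting.

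For part~\ref{sta:DFE}, your ``asymptotically autonomous'' argument for $u\to u_1$ glosses over a genuine technical step. The paper first proves convergence for the \emph{monotone upper system} \eqref{eq:uppermain} (Theorem~\ref{Theo:10}) using Langlais' $\omega$-limit method plus Dini's theorem, and only then returns to the original system. There, the lower barrier for $u$ once $v$ is small is not obtained by a direct sub-solution of \eqref{eq:nodisease}; instead the paper uses the mild formula to show $u(t)$ eventually dominates the solution of a perturbed problem $w_a-d_u\Delta_N w+\mu w=mw-nw^2-\delta n u_1^2$, constructs the steady state $w^-=(1-\epsilon)u_1$ of that problem explicitly (with $\epsilon(1-\epsilon)=\delta$), and invokes the implicit function theorem around $(u_1,0)$ to justify uniqueness for small $\delta$. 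Your sketch would need to fill in this barrier construction.

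For part~\ref{sta:EE}, the paper's argument is \emph{simpler} than what you propose: it builds \emph{time-independent} barriers directly from the principal eigenfunctions, namely $\underline{u}=\epsilon_1\phi$, $\underline{v}=\epsilon_2\varphi$ as a sub-solution of the $\mathbf{F}^l$-system and $(\overline{u},\overline{v})=(K_1Y_\alpha,K_2Y_\alpha)$ as a super-solution of the $\mathbf{F}^r$-system, then applies Proposition~\ref{prof:comparsys1} once $t>T_0^*$. No dynamical ``orbit enters and cannot leave'' argument or bootstrapping is needed; the trap is static. Also, note that your stated route to an upper bound for $v$ via $g(u,v)\le v$ and the linear $(p-q-\mu)$-problem fails when $\mathcal{R}_0>1$, since that linear problem grows; the correct upper barrier is the $(MY_\alpha,NY_\alpha)$ pair from Lemma~\ref{Lemma:super}, which is what you presumably meant by your earlier a~priori bounds.
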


One of the main difficulties in this result lies in the absence of a general age-structured  comparison principle, as the nonlinear term is not monotone. Consequently, system \eqref{eq:main} cannot generate a semi-flow, preventing the direct application of the technique in the work of Ducrot, Kang, and Ruan \cite[Theorem 4.11]{ducrot_age-structured_2024}. In the spirit of Zhao and Ruan \cite[Theorem 3.3]{zhao_spatiotemporal_2023}, a super solution - being a solution of an age-structured monotone system - is tactfully chosen  to form a semi-flow. A similar construction is used for a sub-solution.  Afterward, it follows from the squeeze principle to establish the stability of disease-free equilibrium. On the other hand, due to the lack of the uniqueness of the endemic equilibrium, we are unable to estimate the long-time dynamics of the aforementioned super-solution, which prevents a complete analysis of system \eqref{eq:main} on the long-time stability of the positive co-existence steady states.

Finally, we reduce the logistic term $mu-nu^2$ to obtain the uniqueness of the endemic equilibrium in a certain sense. The main objective is to establish the long time stability of the positive co-existence endemic equilibrium. In this case, we shall consider a new set of conditions \ref{condA'1}, \ref{condA'2} and \ref{condA'3}, which are closely related to the original ones and will be precisely described  in the Section \ref{section:5}. Now, the system \eqref{eq:main} is reduced as follows 
\begin{align}\label{eq:main-no_m_n}
\begin{split}
\left\{\begin{array}{ll}
u_t + u_{a}=d\Delta_N u
-\dfrac{puv}{u+v}+qv-\mu(a)u,&t>0,~(x,a)\in\mathcal{O}_A, \\
v_t + v_{a}=d\Delta_N v+
\dfrac{puv}{u+v}-qv-\mu(a)v,&t>0,~(x,a)\in\mathcal{O}_A,\\
u(t,x,0) = \displaystyle\int_0^{A} \beta(a) u(t,x,a)da,\\
v(t,x,0) = \displaystyle\int_0^{A} \beta(a) v(t,x,a)da,\\
u(0,x,a) = u_0(x,a),~
v(0,x,a) = v_0(x,a).
\end{array}\right.
\end{split}
\end{align}
and the steady state system 
\begin{align}\label{eq:main1_nospace}
\begin{split}
\left\{\begin{array}{ll}
u_{a}=
-\dfrac{puv}{u+v}+qv-\mu(a)u,&a\in(0,A),\\
v_{a}=
\dfrac{puv}{u+v}-qv-\mu(a)v,&a\in(0,A),\\
u(0) = \displaystyle\int_0^{A} \beta(a) u(a)da,~v(0) = \displaystyle\int_0^{A} \beta(a) v(a)da.
\end{array}\right.
\end{split}
\end{align}
We shall prove that the endemic equilibrium  is unique and independent of the spatial diffusion in Theorem \ref{theo:16}, which is a noteworthy result. Next, we state the stability results as follows
\begin{theorem}\label{theo:stab_no_m_n}
Assume \ref{condA'1} to \ref{condA'3} hold and $\mathcal{R}_0>1$. Suppose further that $u_0\in C(\overline{\mathcal{O}_A})$ with $u_0\geq 0$ and $ \supp(u_0)\subset \O\times [0,A)$ and $v_0:=KY_{\alpha}(a)$ for constants $K,~\alpha>0$. Then, the system \eqref{eq:main1_nospace} admits a unique positive solution $(u^*,v^*)$, depending on $(u_0,v_0)$, satisfying
\begin{align*}
w^*(0):=\dfrac{\displaystyle\int_0^A\beta(b)\displaystyle\int_0^be^{-\displaystyle \int_b^e\mu(s)ds}\int_{\O}w_0(x,e)dxdeda}{\displaystyle\int_0^A\beta(b)b\pi_0(b)db}>0.
\end{align*}
where $w_0:=u_0+v_0$ and $w^*:=u^*+v^*$. Moreover, if  $(u(t;u_0,v_0),v(t;u_0,v_0))$ is a solution of the system \eqref{eq:main-no_m_n} starting by the initial condition $(u_0,v_0)$, then there exists $K,~\alpha>0$ large enough so that
\begin{align*}
||u(t;u_0,v_0)-u^*||_{L^{2}(\mathcal{O}_A)}\rightarrow 0,~||v(t;u_0,v_0)-v^*||_{L^{2}(\mathcal{O}_A)}\rightarrow 0.
\end{align*}

\end{theorem}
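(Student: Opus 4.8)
The plan is to exploit the special structure produced by dropping the logistic term and by taking $p,q,\mu$ independent of $x$ with a single diffusion constant $d$: under these reductions the total population $w:=u+v$ obeys a \emph{linear} age-structured equation, because the quotient nonlinearity cancels when the two equations of \eqref{eq:main-no_m_n} are added. Summing gives
\begin{equation*}
w_t+w_a=d\Delta_N w-\mu(a)w,\qquad w(t,x,0)=\int_0^A\beta(a)w(t,x,a)\,da,\qquad w(0,\cdot,\cdot)=w_0:=u_0+v_0 .
\end{equation*}
First I would analyze this linear problem. Since $\beta,\mu$ depend only on $a$ and the Neumann Laplacian has principal eigenvalue $0$ with constant eigenfunction, the spatial modes decouple: the constant mode evolves by the pure renewal dynamics $c_a=-\mu c$, which under the reduced demographic balance built into \ref{condA'1}--\ref{condA'3} (i.e. $\int_0^A\beta\pi_0\,da=1$) admits the one-parameter family of equilibria $c\,\pi_0(a)$, while each nonconstant mode acquires an extra factor $-d\nu_k<0$ pushing its net reproduction strictly below $1$ and hence decays. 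Invoking the $\omega$-limit set method of Langlais \cite{langlais_large_1988} exactly as in the proof of Theorem~\ref{Theo:2}\ref{sta:DFE}, this yields $\|w(t)-w^*\|_{L^2(\mathcal{O}_A)}\to0$, where $w^*(a)=w^*(0)\pi_0(a)$ is spatially homogeneous and $w^*(0)$ is the projection of $w_0$ onto the principal (reproductive-value) mode, producing precisely the stated formula.

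Next I would determine the steady states of \eqref{eq:main1_nospace} once the total $w^*$ is known, which is the content of Theorem~\ref{theo:16}. Writing $u=\theta w^*$, $v=(1-\theta)w^*$ and using $w^*_a=-\mu w^*$, the susceptible fraction $\theta$ satisfies the clean, $\mu$- and $w^*$-free logistic ODE
\begin{equation*}
\theta_a=(1-\theta)\bigl(q-p\theta\bigr),\qquad a\in(0,A),
\end{equation*}
whose flow is monotone in the datum $\theta(0)=\theta_0$. The renewal condition for $u$ then reduces to the scalar consistency equation $\theta_0=\bigl(\int_0^A\beta\,\theta(\cdot;\theta_0)\,\pi_0\bigr)\big/\bigl(\int_0^A\beta\,\pi_0\bigr)$, in which the scale $w^*(0)$ cancels. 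Monotonicity of $\theta(\cdot;\theta_0)$ in $\theta_0$ shows this weighted-average map has, besides the disease-free root $\theta_0=1$, exactly one interior root $\theta_0\in(0,1)$ precisely when $\mathcal{R}_0>1$ (equivalently $q/p<1$, the attracting interior equilibrium of the ODE). This gives the unique positive $(u^*,v^*)=\bigl(\theta\,w^*,(1-\theta)\,w^*\bigr)$, independent of the diffusion.

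For the stability statement I would decouple the system into a single scalar equation. Since $w$ is now a \emph{known} function converging to $w^*$, substituting $u=w-v$ turns the $v$-equation into the scalar age-structured KPP equation
\begin{equation*}
v_t+v_a=d\Delta_N v+\bigl(p-q-\mu(a)\bigr)v-\frac{p}{w}\,v^2 ,
\end{equation*}
which, crucially, \emph{does} obey a comparison principle, thereby removing the obstruction to monotonicity that blocked Theorems~\ref{main:theo1}--\ref{Theo:2} for the full system. The reaction term is increasing in $w$, so on $\overline{\mathcal{O}_{A_3}}$ I would freeze $w$ at $w^*\pm\varepsilon$ for $t\ge T_\varepsilon$ and compare, in the spirit of Zhao--Ruan \cite[Theorem 3.3]{zhao_spatiotemporal_2023}, sandwiching $v$ between solutions of two autonomous monotone equations; the large, spatially homogeneous datum $v_0=KY_\alpha$ with $K,\alpha$ large supplies the ordered super-solution (and, together with $\mathcal{R}_0>1$, a nontrivial sub-solution keeping the infected population away from extinction). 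Each frozen equation has a unique globally attracting positive steady state $v^*_{\pm\varepsilon}$, handled by the sub-/super-solution and principal-eigenvalue analysis already used for \eqref{eq:nodisease}, with $v^*_{\pm\varepsilon}\to v^*$ as $\varepsilon\to0$; the squeeze then yields $v\to v^*$ and hence $u=w-v\to w^*-v^*=u^*$.

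The hard part will be the asymptotically autonomous comparison near the maximal age: the coefficient $p/w^*$ blows up as $a\to A$ because $w^*(a)=w^*(0)\pi_0(a)$ vanishes there, so the frozen equations degenerate and $w^*-\varepsilon$ fails to be positive on all of $[0,A]$. I would therefore run the squeeze only on $\overline{\mathcal{O}_{A_3}}$ for $A_2<A_3<A$, where $w^*$ is bounded below, and control the tail $a\in(A_3,A)$ directly from $0\le v\le w$ combined with the already-established $L^2$ smallness of $w$ near $A$ (since $\pi_0(A)=0$), so that the two estimates combine into the claimed $L^2(\mathcal{O}_A)$ convergence. The remaining technical points are the continuous dependence of $v^*_{\pm\varepsilon}$ on $\varepsilon$ up to the degeneracy, and the verification that the prescribed $K,\alpha$ render $KY_\alpha$ a genuine time-uniform super-solution ensuring global existence and an a priori upper bound.
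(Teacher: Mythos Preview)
Your overall architecture matches the paper's: sum to get the linear $w$-equation, invoke Langlais for $w\to w^*=w^*(0)\pi_0$, then reduce to the scalar equation
\[
v_t+v_a+\mu(a)v=d\Delta_N v+(p-q)v-\dfrac{p}{w}\,v^2
\]
and exploit its comparison structure. Your derivation of the steady state via the fraction $\theta=u/w^*$ and the logistic ODE $\theta_a=(1-\theta)(q-p\theta)$ is in fact a bit cleaner than what the paper does; the paper simply substitutes $u=w^*-v$ to obtain a KPP equation for $v$ on $\mathcal{O}_{A_3}$ and cites Theorem~A.\ref{Lemma A.7}.

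Where you genuinely diverge is in the convergence argument for $v$. You propose an asymptotically-autonomous sandwich: freeze $w$ at $w^*\pm\varepsilon$ on $\overline{\mathcal{O}_{A_3}}$ for $t\ge T_\varepsilon$, compare with the two autonomous KPP problems, squeeze, and send $\varepsilon\to0$. The paper instead uses the hypothesis $v_0=KY_\alpha$ much more directly. It checks that for $\alpha$ large this time-independent $v_0$ is a super-solution of the \emph{non-autonomous} $v$-equation uniformly in $t$: in the verification the term $\frac{p}{w}v_0^2\ge0$ is simply dropped, so no information on $w$ beyond positivity is needed and no freezing enters. From Proposition~\ref{prof:comparoneeq} the paper then obtains $v(t)\le v_0$ and $v(t+s)\le v(t)$, hence pointwise monotone convergence of $v(t)$ on $\mathcal{O}_{A_3}$ to some $h^*$; the limit is identified with $v^*$ by the same $\omega$-limit test-function computation as in Lemma~\ref{omegasetlimit}, where the nonlinear term $\frac{pv^2}{w}$ passes to the limit by dominated convergence because $\bigl|\frac{pv^2}{w}\bigr|\le pv\le pM$. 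The upgrade from $\mathcal{O}_{A_3}$ to $L^2(\mathcal{O}_A)$ is exactly your tail argument.

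In short, your freezing route is viable but heavier, and it runs into precisely the degeneracy at $a\to A$ that you flag. The paper's point is that the special form of the initial datum $v_0=KY_\alpha$ is chosen so that the actual solution $v(t)$ is itself monotone in $t$, which bypasses the freezing step entirely; this is why the theorem is stated only for that particular class of $v_0$ rather than for general data.
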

The function $Y_{\alpha}$ will be determined in Lemma \ref{Lemma:super} of this work. Upon examining the results, it is observed that the existence, uniqueness and stability steady state deeply depend on the initial condition, relating with the classical approaches of Langlais \cite[Theorem 4.9]{langlais_large_1988}. The key idea here is to sum the two equations of system \eqref{eq:main-no_m_n}, thereby deriving an age-structured KPP-type equation for $v$. This  equation admits a unique steady state, depending on the choice of the constant $w^*(0)$, in a similar manner to Appendix section of this work. Afterward, by choosing a suitable initial condition for $v$, one obtains the local $L^2$-convergence in the age variable. Then, Lebesgue dominated convergence theorem yields the global $L^2$-convergence.


\textbf{We structure the paper as follows}:  In Section \ref{section:2}, we introduce the preliminary results that are used throughout the paper.  Additionally, we prove the global-in-time existence of a unique solution to system \eqref{eq:main} based on the semi-group theory for a certain class of initial conditions. Three comparison principles, which will be used in future work, are also stated in this section. In Section \ref{section:3}, we present a series of properties concerning the basic reproduction number $\mathcal{R}_0$ and establish sufficient conditions to ensure the existence of the endemic equilibrium. Section \ref{section:4} first provides the stability of the disease-free equilibrium $(DFE)$ for a super-solution to system \eqref{eq:main} using the age-structured semi-flow theory. Afterward, we develop a specific sub-solution and demonstrate its stability in a similar manner with the help of implicit function theorem. Then, the long time behavior the disease-free equilibrium $(DFE)$ for system \eqref{eq:main} is obtained. Finally, in Section \ref{section:5}, we completely investigate the long time stability of positive co-existence endemic equilibrium $(EE)$ for system \eqref{eq:main-no_m_n}.

%

\textbf{Acknowlegment}: The second author expresses his gratitude to Professor Benoît Perthame  his course about the age-structured model at Summer School on Mathematical Biology 2023 at Vietnam Institute for Advanced Study in Mathematics (VIASM) and Professor Arnaud Ducrot for fruitful discussion. We deeply appreciate their stimulating, tireless encouragement during this work. Part of this work had been done while the second author was visiting VIASM, whose hospitality and financial support are acknowledged.

\section{\bf Preliminary results} \label{section:2}

Let us define $\mathcal{H}(x,u) = m(x)u-n(x)u^2$. Based on the  assumptions \ref{cond:cond1}, it is easy to check that for any $K>0$, there exists $k>0$ such that
\begin{align*}
\left|\left(m(x)u_1-n(x)u_1^2\right)-\left(m(x)u_2-n(x)u_2^2\right)\right| \leq k|u_1 - u_2|,
\end{align*}
for any $u_1,\text{ }u_2 \in [-K,K] \text{ and } x \in \overline{\O}$. This implies that the function $\mathcal{H}$ is Lipschitz continuous in $u\in [-K,K]$, uniformly in $x\in \overline{\O}$. Also, consider
\begin{align*}
F^l(u)(x)=[m(x)-p(x)-n(x)u]u;\textbf{ }\quad F^r(u,v)(x)=[m(x)-n(x)u]u+q(x)v,
\end{align*}
and
\begin{align*}
F^c(u,v)(x)=\left[m(x)-n(x)u-\dfrac{p(x)v}{u+v} \right]u+q(x)v;~ G(u,v)(x)=\left[\dfrac{p(x)u}{u+v}-q(x)\right]v.
\end{align*}
Based on the arguments in \cite[Section 1.1]{allen_asymptotic_2008}, it is known that $F^l$, $F^c$, $F^r$ and $G$ are local Lipchitz functions on $(u,v)\in \mathbb{R}_{+}^2$, independent with $x$. For the future purpose, with $u,~v\geq 0$, we define 
\begin{align*}
F^l(0,0)=F^c(0,0)=F^r(0,0)=G(0,0)=0.
\end{align*}
This still ensures the Lipschitz property of them. 

\begin{definition}\label{def:4}
Let $F:\R_+\times \overline{\mathcal{O}_A}\times\R\mapsto \R$, $F=F(t,x,a,u)$ be a continuous function. We say $F$ is \textit{monotone} if for any $M_0>0$ and $\sigma>0$, there exists $\lambda>0$ such that 
\begin{align*}
0\leq F(t,x,a,u)+\lambda u\leq F(t,x,a,v)+\lambda v,
\end{align*}
for any $ u,~v\in \R,~ 0\leq u\leq v\leq M_0,~(x,a)\in \overline{\mathcal{O}_A},~t\in [0,\sigma]$
\end{definition}
We recall some notations: For any $\textbf{u}=(u_1,u_2),\textbf{v}=(v_1,v_2)\in \R^2$, we define 
\begin{align*}
\textbf{u}\leq \textbf{v} \text{ if and only if } u_1\leq v_1,~u_2\leq v_2. 
\end{align*} 
Based on this notation, we can define the monotonicity similar to the function to $\R^2$ as follows

\begin{definition}\label{def:5}
Let $\textbf{G}:\overline{\mathcal{O}_A}\times\R^2\mapsto \R^2$, $\textbf{G}=\textbf{G}(x,a,\textbf{u})$ be a continuous function. We say $\textbf{G}$ is \textit{monotone} if for any $M_0,~M_1>0$, there exists $\lambda>0$ such that 
\begin{align*}
0\leq \textbf{G}(x,a,\textbf{u})+\lambda \textbf{u}\leq \textbf{G}(x,a,\textbf{v})+\lambda \textbf{v},
\end{align*}
for any $ \textbf{u},\textbf{v}\in \R^2,~ 0\leq \textbf{u}\leq \textbf{v}\leq (M_0,M_1),~(x,a)\in \overline{\mathcal{O}_A}$.
\end{definition}
The definition of a monotone function is useful for future work, as it is an essential condition for applying the semi-flow technique.

Next, for any $\lambda>0$ large enough, we consider
\begin{align*}
\textbf{F}_{\lambda}(u,v):=(F^c_{\lambda}(u,v),G_{\lambda}(u,v)):=\lambda (u,v)+(F^r(u,v),G(u,v)).
\end{align*}
Then, the Jacobian matrix of $\textbf{F}_{\lambda}$ is 
{\small
\begin{align*}
J\textbf{F}_{\lambda}(u,v)=\left(\begin{matrix}
\lambda + m(x)-2n(x)u -p(x)\dfrac{v^2}{(u+v)^2} & -p(x)\dfrac{u^2}{(u+v)^2}+q(x)\\
p(x)\dfrac{v^2}{(u+v)^2}& \lambda + p(x)\dfrac{u^2}{(u+v)^2}-q(x)
\end{matrix}\right)
\end{align*}}

Now, for any $M>0$, with $0\leq u_1\leq u_2\leq M$, $0\leq v_1\leq v_2\leq M$ and $\textbf{w}_1=(u_1,v_1),~\textbf{w}_2=(u_2,v_2),~\textbf{h}=\textbf{w}_2-\textbf{w}_1$, one has
\begin{align*}
\textbf{F}_{\lambda}(\textbf{w}_1)-\textbf{F}_{\lambda}(\textbf{w}_2)=\int_0^1J\textbf{F}_{\lambda}(\textbf{w}_1+t\textbf{h})\textbf{h}dt
\end{align*}
A direct calculation yields that
\begin{align}\label{boundedblow}
\begin{split}
\begin{array}{llll}
F^r_{\lambda}(\textbf{w}_1)-F^r_{\lambda}(\textbf{w}_2)
&\geq \left(\lambda -2\overline{n}M - \overline{p}\right)(u_2-u_1) -\overline{p}(v_2-v_1)\\
&\geq 2L(u_2-u_1)- L(v_2-v_1),\\
G_{\lambda}(\textbf{w}_1) - G_{\lambda}(\textbf{w}_2)&\geq (\lambda -\overline{q})(v_2-v_1) \\
&\geq L(v_2-v_1),
\end{array}
\end{split}
\end{align}
for $\lambda$ and $L>0$ large enough. 
This shows the non-monotone structure of $\textbf{F}$.

For convenience, we introduce the following notations
\begin{align*}
\begin{array}{llll}
\mathcal{X}:=C(\overline{\mathcal{O}_A}); &\mathcal{X}_{+}:=\{u\in \mathcal{X}:u\geq 0\};&\mathcal{X}_{0}:=\{u\in \mathcal{X}:u(x,A)=0,~\forall x\in \overline{\O}\}\\
\mathcal{Y}:=C(\overline{\O});&\mathcal{Y}_+:=\{u\in \mathcal{Y}:u\geq 0\},
\end{array}
\end{align*}
and
\begin{align*}
\begin{array}{lll}
&\mathcal{E}:=L^2(\mathcal{O}_A), ~\mathcal{E}_+:=\{u\in \mathcal{E}:u\geq 0~a.e.\},\\ 
& L^{\infty}_{+}(\mathcal{O}_A):=\{u\in L^{\infty}(\mathcal{O}_A):u\geq 0~a.e.\}.
\end{array}
\end{align*}


Throughout this paper, whenever we say a positive solution $u$ meaning that $u>0$ on $\mathcal{O}_A$ and $u\geq 0$ on $\overline{\mathcal{O}_A}$. It maybe possible that $u(x,0)>0$ and $u(x,A)=0$ for any $x\in \overline{\O}$. Consider $A_2<A_3<A$ and for any appropriate function $f$, it is known that
\begin{align*}
\displaystyle\int_0^{A} \beta(x,a) f(x,a)da=\displaystyle\int_0^{A_2} \beta(x,a) f(x,a)da = \displaystyle\int_0^{A_3} \beta(x,a) f(x,a)da
\end{align*}
since $\supp(\beta)\subset \overline{\O}\times (A_0,A_2]$. Thus, one can consider the system \eqref{eq:main} or \eqref{eq:main1} on $\overline{\mathcal{O}_{A_3}}$ instead of $\overline{\mathcal{O}_A}$, depending on the situation.

\subsection{\bf Spectral theory for age-structured model} \label{sec 2.1}

Let us begin the section with the spectral theory for our model in  \cite{guo_on_1994,kang_effects_2022,delgado_nonlinear_2006}.  For $d>0$, $\beta^*,$ $\mu^*\in C^{2,1}(\overline{\O}\times [0,A))$, $\mu^*$ satisfies \ref{cond:cond4} but not necessary positive. Define 
\begin{align*}
\mathcal{A}_{d,\mu^*}\phi(x,a) := d \Delta_N \phi(x,a) -\phi_a(x,a)-\mu^*(x,a)\phi(x,a),
\end{align*}
where
\begin{align*}
D(\mathcal{A}_{d,\mu^*})&:=\left\{\phi:\phi,\textbf{ }\mathcal{A}_{d,\mu^*}\phi\in \mathcal{E},~\left.\dfrac{\partial \phi}{\partial \textbf{n}}\right|_{\partial \O}=0,~\phi(x,0)=\int^A_0\beta(x,a)\phi(x,a)da \right\}\\
&\subset H^{1}\left((0,A),L^2(\O)\right)\cap L^{2}\left((0,A),H^2(\O)\right) \.
\end{align*}
(see, for example \cite{arendt_heat_2005}, about the Laplacian in Neumann boundary condition). It is well-known that if $u\in H^{1}\left((0,A),L^2(\O)\right)$ then $u\in C([0,A],L^2(\O))$ (see \cite[Section 5.9.2]{evans_partial_2010}). Thus, the continuity of $\phi\in D(\mathcal{A}_{d,\mu^*})$ ensures that the equation $\phi(x,0)=\displaystyle\int^A_0\beta(x,a)\phi(x,a)da$ is well-defined. 
Let us consider the eigenvalue problem as follows
\begin{align*}
\mathcal{A}_{d,\mu^*}\phi=\lambda \phi
\text{ or }
\left\{\begin{array}{l}
d \Delta_N \phi -\phi_a-\mu^*\phi=\lambda\phi,\\
 \phi(x,0) = \displaystyle\int_0^{A} \beta(x,a) \phi(x,a)da,\\
 \left.\dfrac{\partial \phi}{\partial \textbf{n}}\right|_{\partial \O}=0,
\end{array}\right.
\end{align*}
and define 
\begin{align*}
\mathcal{B}_{d,\mu^*}\phi:=-\mathcal{A}_{d,\mu^*}\phi =\phi_a(x,a)-d \Delta_N \phi(x,a)-\mu^*(x,a) \phi(x,a),
\end{align*}
with domain $D(\mathcal{B}_{d,\mu^*}):=D(\mathcal{A}_{d,\mu^*})$. Thanks to Theorem A.\ref{TheoA.1}, there exists unique principal eigenvalue $\lambda_{d,\mu^*}^*$ of the operator $\mathcal{B}_{d,\mu^*}$. Hence, $\mathcal{A}_{d,\mu^*}$ admits unique principal eigenvalue $\lambda_{d,\mu^*}=-\lambda_{d,\mu^*}^*$. Furthermore, for any other eigenvalue $\lambda$ of $\mathcal{A}_{d,\mu^*}$, one has $Re(\lambda)< \lambda_{d,\mu^*}$ and $\mu^*\mapsto \lambda_{d,\mu^*}$ is decreasing. The work in the paper will be based on the sign of $\lambda_{d,\mu^*}$ with the appropriate chosen of $\mu^*$ and $d>0$. 

Now, in the case $\mu^*\geq 0$, let us prove the $m$-disspative property of $\mathcal{A}_{d,\mu^*}$. For any $\phi \in D(\mathcal{A}_{d,\mu^*})$, it is well-known that $\left(d \Delta_N \phi,\phi\right)_{\mathcal{E}}\leq 0$. Thus, 
\begin{align*}
\left(\mathcal{A}_{d,\mu^*}\phi,\phi\right)_{\mathcal{E}}&= \left(d \Delta_N \phi,\phi\right)_{\mathcal{E}} -\left(\phi_a,\phi\right)_{\mathcal{E}}-\left(\mu^*\phi,\phi\right)_{\mathcal{E}},\\
&\leq -\int_0^A\int_{\O}\phi_a\phi dxda=-\dfrac{1}{2}\int_{\O}\int_0^A(\phi^2)_a da dx\\
&\leq \dfrac{1}{2}\int_{\O}\phi^2(x,0)dx=\dfrac{1}{2}\int_{\O}\left(\int^A_0\beta(x,a)\phi(x,a)da\right)^2dx\\
&\leq \dfrac{1}{2}\int_{\O}\int^A_0\beta^2(x,a)da\int_0^A\phi^2(x,a)dadx\\
&\leq \dfrac{1}{2}\sup_{x\in \overline{\O}}\int^A_0\beta^2(x,a)da \left(\phi,\phi\right)_{\mathcal{E}}
\end{align*}
which is $m$-disspative property. Similar to the one in \cite[Theorem 1]{guo_on_1994}, $(\lambda I-\mathcal{A}_{d,\mu})^{-1}$ exists for sufficiently large $\lambda>0$. In addition, it is a compact map. As the result,   $(\lambda I+\mathcal{B}_{d,\mu})^{-1}$ exists $\lambda>0$ large enough.


\subsection{\bf Semi-group theory for age-structured model in the bounded domain} \label{sec:semi-group}

Inspired by the semi-group in \cite{ducrot_travelling_2009}, it is necessary to consider an Hille-Yosida linear operator to connect $u_a$, $d\Delta_N u$ and the age-structure. In the case $\mu^*=\mu$, it is known that \ref{cond:cond5} implies the following Lebesgue measure 
\begin{align*}
\left|\{a\in [0,A]:\beta_{\min}(a)>0\}\right|>0.
\end{align*}
Thus, similar to the one in \cite[Theorem 1]{guo_on_1994}, the equation
\begin{align}\label{eq:original}
\left\{\begin{array}{lll}
w_t(t,x,a)=\mathcal{A}_{d,\mu}w(t,x,a),&t>0,~(x,a)\in \mathcal{O}_A,\\
w(0,x,a)=w_0(x,a),&(x,a)\in \mathcal{O}_A,&
\end{array}\right.
\end{align}
admits a unique $C_0$-semi-group $\Phi_{d,\mu}(t)$ generated from the infinitesimal generator $\mathcal{A}_{d,\mu}$ and
\begin{align*}
\left\{\begin{array}{lll}
w(t,\cdot,\cdot)=\Phi_{d,\mu}(t)w_0\in C([0,\infty),\mathcal{E}) &\text{ if }w_0\in \mathcal{E},\\
w(t,\cdot,\cdot)=\Phi_{d,\mu}(t)w_0\in C^1([0,\infty),\mathcal{E}) &\text{ if }w_0\in D(\mathcal{A}_{d,\mu}).
\end{array}\right.
\end{align*} 
Furthermore, by similar arguments to those in \cite[Lemma 1]{guo_on_1994}, with $0\leq a_0<A$, we obtain the following equation
\begin{align*}
\left\{\begin{array}{lll}
u_a=d\Delta_N u-\mu(x,a_0+a)u\\
u(\tau,x)=\phi(x)
\end{array}\right.
\end{align*}
admits a unique mild solution $u(x,a;\tau,a_0,\phi)=\mathcal{H}(a_0,\tau,a)\phi(x)$, where\\ $\mathcal{H}(a_0,\tau,a)$, $0\leq \tau\leq a\leq A-a_0$ is a family of uniformly linear bounded compact positive operator on $\mathcal{E}$ and strongly continuous on $\tau,s$. In addition, one has
\begin{align}\label{estimate1}
\begin{split}
&\mathcal{H}(a_0,\tau,a) \geq e^{\displaystyle -\int_\tau^a\mu_{\max}(a_0+k)dk}T_{d\Delta_N}(a-\tau) \\
&\mathcal{H}(a_0,\tau,a)\leq e^{\displaystyle -\int_\tau^a\mu_{\min}(a_0+k)dk}T_{d\Delta_N}(a-\tau)
\end{split}
\end{align}
where $T_{d\Delta_N}(t)$, $t>0$ is the analytic semi-group generated by Neumann Laplacian $-d\Delta_N$, which is well-known to be strongly positive on $C(\overline{\O})$ and $L^2(\O)$. On the other hand, from \eqref{eq:original}, the characteristic line calculation yields that
\begin{align*}
w(t,x,a)=\left\{\begin{array}{lll}
\mathcal{H}(a-t,0,t)w_0(\cdot,a-t)(x),&a\geq t,\\
\mathcal{H}(0,0,a)\displaystyle \int_0^A\beta(\cdot,s)w(t-a,\cdot,s)ds(x),&t>a.\\
\end{array}\right.
\end{align*}
or the second version
\begin{align}\label{charaterics-line}
w(t,x,a)=\left\{\begin{array}{lll}
\mathcal{H}(a-t,0,t)w_0(\cdot,a-t)(x),&a\geq  t,\\
\mathcal{H}(0,0,a)\displaystyle \int_{A_0}^{A_2}\beta(\cdot,s)w(t-a,\cdot,s)ds(x),&t> a.\\
\end{array}\right.
\end{align}
since $\beta(x,a)=0$, $a>A_2$ or $a<A_0$ for any $x\in \overline{\O}$. Thus, if $w_0\geq 0$, one has that $w>0$ on $\overline{\O}\times [0,A)$ thanks to the strong positivity of the Neumann Laplacian semigroup $T_{d\Delta_N}(t)$, $t>0$. In addition, thanks to positive property of the semi-group $\Phi_{d,\mu^*}(t)$ and  \cite[Theorem 4.3 and 4.5]{magal_monotone_2019}, the comparison principle holds for \eqref{eq:original}. 

Next, consider $w_0\in L^{\infty}(\mathcal{O}_A)$ and there exists $M>0$ such that
\begin{align*}
0\leq w_0(x,a)\leq M\pi(a) \text{ on }(x,a)\in \overline{\mathcal{O}_A}.
\end{align*}
Define $z=M\pi(a)$, we can check that $A_{d,\mu}z-z_t=\mu_{\min}z-\mu z\leq 0$ on $\overline{\mathcal{O}_A}$. Then, one has
\begin{align*}
0\leq \Phi_{d,\mu^*}(t)w_0=w(t,\cdot,\cdot;w_0)\leq  M\pi(a) \text{ on }(x,a)\in \overline{\mathcal{O}_A},~t>0.
\end{align*}
For our purpose, consider
\begin{align*}
\mathcal{B}_{\pi}(\mathcal{O}_A):=\{u:\mathcal{O}_A \rightarrow \R:u\text{ is measurable},~L(a)|u|<\infty \text{ on }\mathcal{O}_A\}
\end{align*}
and norm $||u||_{\mathcal{B}_{\pi}(\mathcal{O}_A)}=||Lu||_{L^{\infty}(\mathcal{O}_A)}$. One can check it is a Banach space and $\mathcal{B}_{\pi}(\mathcal{O}_A)\subset L^{\infty}(\mathcal{O}_A)$.

Now, suppose that $\supp(w_0)\cap \mathcal{O}_{A_2}\neq \emptyset$. Using similar arguments to those in \cite[Lemma 4.10]{langlais_large_1988}
(with $\beta_{n}=\beta_{\min},~\mu_n=\mu_{\max}(a)$ and $\nu=0$) and the age-structured comparison principle, there exists $T^*_d>0$ such that
\begin{align*}
w(t,x,0)>0,~t>T^*_d,~x\in\O.
\end{align*}
Thus, for any $t> T^*_d+A$, one has
\begin{align*}
w(t,x,a)=\mathcal{H}(0,0,a)\left[w(t,\cdot,0)\right](x)>0,~\forall x\in \overline{\O},~ 0<a<A,
\end{align*}
since $w(t-a,\cdot,0)=\displaystyle\int_0^{A}\beta(\cdot,s)w(t-a,\cdot,s)ds$ and thanks to the strongly positivity of  $T_{d\Delta_N}(t)$ on $\overline{\O}$ for $t>0$.  Then again, one has 
\begin{align*}
w(t,x,0)=\displaystyle\int_0^{A}\beta(x,s)w(t,x,s)ds>0,~\forall t>T_d^*+A,~x\in \overline{\O}.
\end{align*}
The definition of $w>0$ may understand as almost everywhere or the one in Definition A.\ref{DefA.1} depending on the situation.




Now, we are ready to state the first existence result 


%

\begin{lemma}\label{local}
Assume \ref{cond:cond1} to \ref{cond:cond5} hold. Suppose further that $u_0,~v_0\in \mathcal{B}_{\pi}(\mathcal{O}_A)$ and $u_0,~v_0\geq 0$. Then, there exists a unique local positive solution $(u,v)=(u(t),v(t))=(u(t,x,a; u_0, v_0), v(t,x,a; u_0, v_0))$  to \eqref{eq:main} on $[0,T^*)\times\overline{\mathcal{O}_A}$ starting by the initial condition $(u_0,v_0)$  with  $T^*\leq \infty$ is the maximum existence time and $u,v\geq 0$.
\end{lemma}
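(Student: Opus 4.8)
The plan is to recast \eqref{eq:main} as a semilinear abstract Cauchy problem in the weighted space $\mathcal{B}_\pi(\mathcal{O}_A)$ and to solve it by a contraction argument. Since the generator $\mathcal{A}_{d,\mu}$ already incorporates the transport term $-\partial_a$, the Neumann diffusion $d\Delta_N$, the mortality $-\mu$ and the renewal condition at $a=0$, the system \eqref{eq:main} is equivalent to $u_t=\mathcal{A}_{d_u,\mu}u+F^c(u,v)$ and $v_t=\mathcal{A}_{d_v,\mu}v+G(u,v)$, so a mild solution is precisely a fixed point of the Duhamel map
\begin{align*}
u(t)&=\Phi_{d_u,\mu}(t)u_0+\int_0^t\Phi_{d_u,\mu}(t-s)F^c(u(s),v(s))\,ds,\\
v(t)&=\Phi_{d_v,\mu}(t)v_0+\int_0^t\Phi_{d_v,\mu}(t-s)G(u(s),v(s))\,ds,
\end{align*}
where $\Phi_{d_u,\mu},\Phi_{d_v,\mu}$ are the $C_0$-semigroups of \eqref{eq:original}.

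First I would fix a radius $R>\max\{\|u_0\|_{\mathcal{B}_\pi},\|v_0\|_{\mathcal{B}_\pi}\}$ and a time $T>0$, and work in the closed positive ball
\begin{align*}
\mathcal{M}_{T,R}:=\left\{(u,v)\in C\big([0,T],\mathcal{B}_\pi(\mathcal{O}_A)^2\big):\ 0\leq u,\,v,\ \sup_{[0,T]}\big(\|u\|_{\mathcal{B}_\pi}+\|v\|_{\mathcal{B}_\pi}\big)\leq R\right\},
\end{align*}
a complete metric space for the sup-in-time distance. The pointwise bounds \eqref{estimate1}, together with the super-solution $z=M\pi(a)$ computation already carried out in Section~\ref{sec:semi-group}, show that each $\Phi_{d,\mu}(t)$ leaves $\mathcal{B}_\pi(\mathcal{O}_A)$ invariant with an operator bound uniform on $[0,T]$, so the leading linear terms remain in the ball. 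On the positive cone intersected with this ball the logistic term $-n(x)u^2$ is the only piece that is merely locally Lipschitz, while the quotient term is globally Lipschitz there; hence $F^c$ and $G$ are Lipschitz in the $\mathcal{B}_\pi$-norm with a constant $\ell_R$ on $\mathcal{M}_{T,R}$. Because $\int_0^t\|\Phi_{d,\mu}(t-s)\|\,ds\leq C\,t$, the Duhamel integrals carry a factor $O(T)$ in the difference estimate; choosing $T=T(R)$ small enough makes the map a strict contraction, and the Banach fixed point theorem delivers a unique local mild solution.

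Next I would keep the fixed point inside the positive cone by exploiting the quasi-positivity of the reaction together with the positivity of the semigroups. Since $F^c(0,v)=q(x)v\geq 0$ and $G(u,0)=0$ for $u,v\geq 0$, there is $\lambda>0$ (depending on $R$) with $F^c(u,v)+\lambda u\geq 0$ and $G(u,v)+\lambda v\geq 0$ on the ball; rewriting the Duhamel formula with the rescaled semigroups $e^{-\lambda t}\Phi_{d,\mu}(t)$---still positive by the $\mathcal{H}$-representation \eqref{charaterics-line} and the strong positivity of $T_{d\Delta_N}$---the map sends nonnegative data to nonnegative functions, so $\mathcal{M}_{T,R}$ is invariant and the solution satisfies $u,v\geq 0$. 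A standard continuation then extends it to a maximal interval $[0,T^*)$ on which either $T^*=\infty$ or $\|u(t)\|_{\mathcal{B}_\pi}+\|v(t)\|_{\mathcal{B}_\pi}\to\infty$ as $t\uparrow T^*$, and uniqueness on each subinterval propagates to uniqueness on $[0,T^*)$.

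The main obstacle is to make the abstract scheme compatible with the weighted space $\mathcal{B}_\pi(\mathcal{O}_A)$ and with positivity at the same time: one must check that the Duhamel integrals preserve both the $\pi(a)$-decay encoded in the $\mathcal{B}_\pi$-norm---which is forced by the $L^1$ blow-up of $\mu$ near $a=A$ in \ref{cond:cond4}---and the sign of the solution, all under the local-in-time restriction imposed by the quadratic logistic term $-n(x)u^2$. The pointwise estimates \eqref{estimate1} and the comparison with the super-solution $M\pi(a)$ are exactly what reconcile these demands and confine the solution below $M\pi(a)$ up to the blow-up time $T^*$.
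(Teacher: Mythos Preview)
Your proposal is correct and follows the same abstract scheme as the paper: recast \eqref{eq:main} as a semilinear Cauchy problem for the pair $(\mathcal{A}_{d_u,\mu},\mathcal{A}_{d_v,\mu})$, use that $\Phi_{d,\mu}$ leaves $\mathcal{B}_\pi$ invariant with a uniform bound, exploit the local Lipschitz property of $F^c,G$ on bounded sets of $\mathcal{B}_\pi$, and run a Duhamel fixed-point argument (the paper cites \cite[Lemmas 5.2.4 and 5.2.5]{magal_theory_2018} for exactly this step).

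The one genuine difference is the positivity argument. You shift by $\lambda$ so that $F^c(u,v)+\lambda u\geq 0$ and $G(u,v)+\lambda v\geq 0$ on the nonnegative ball, then use positivity of $e^{-\lambda t}\Phi_{d,\mu}(t)$ to see that the Picard map preserves the cone. The paper instead sandwiches $\textbf{F}^c$ between two \emph{monotone} nonlinearities $\textbf{F}^l\leq\textbf{F}^c\leq\textbf{F}^r$, builds the corresponding solutions $(u^l,v^l)$ and $(u^r,v^r)$, applies the comparison results of \cite[Theorems 4.3, 4.5]{magal_monotone_2019} to each monotone system, and then concludes $0\leq(u^l,v^l)\leq(u,v)\leq(u^r,v^r)$ via \cite[Propositions 5.2, 5.4]{magal_monotone_2019}. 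Your route is more economical for this lemma alone; the paper's route pays off downstream, since the auxiliary solutions $(u^l,v^l)$ and $(u^r,v^r)$ are precisely the objects reused in Theorem~\ref{theo:2} (and in Section~\ref{section:4}) to upgrade local to global existence and to obtain the uniform bounds and long-time estimates.
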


\begin{proof}
It is easy to check $(u_0,v_0)\in \mathcal{E}\times \mathcal{E}$.  We can rewrite the system \eqref{eq:main} as the following Cauchy problem
\begin{align}
\left\{\begin{array}{lll}
u_t = \mathcal{A}_{d_u,\mu}u + F^c(u,v),&t>0,~(x,a)\in \mathcal{O}_A,\\
v_t = \mathcal{A}_{d_v,\mu}v + G(u,v),&t>0,~(x,a)\in \mathcal{O}_A,\\
u(0,x,a)=u_0(x,a),&(x,a)\in \mathcal{O}_A,\\
v(0,x,a)=v_0(x,a),&(x,a)\in \mathcal{O}_A.
\end{array}\right.
\end{align}

Let us define $\textbf{A} := (\mathcal{A}_{d_u,\mu},\mathcal{A}_{d_v,\mu})$, $D(\textbf{A}):=D(\mathcal{A}_{d_u,\mu})\times D(\mathcal{A}_{d_v,\mu})$ and $\textbf{F}^c(u,v)=(F^c(u,v),G(u,v))$ for any $(u,v)\in \mathcal{E}\times \mathcal{E}$. It is easy to check that $\textbf{A}$ is a closed operator since $\mathcal{A}_{d_u,\mu}$ and $\mathcal{A}_{d_v,\mu}$ are closed operators. Using same arguments in \cite{guo_on_1994}, with $i=u \text{ or }v$, one has the $m$-disspative property of $\mathcal{A}_{d_i,\mu}$ as follows
\begin{align*}
\left(\mathcal{A}_{d_i,\mu}\phi-C\phi,\phi \right)\leq 0,~\forall \phi \in L^2(\mathcal{O}_A),
\end{align*}
where $C=\dfrac{1}{2}\sup\limits_{x\in \O}\displaystyle\int_0^A\beta^2(x,a)da$. Thus, thanks to \cite[Theorem 4.2]{pazy_semigroups_2012}, we have that
\begin{align*}
\left\|[(K +C)I - \mathcal{A}_{d_i,\mu}]\phi\right\|_{L^2(\mathcal{O}_A)}\geq K ||\phi||_{L^2(\mathcal{O}_A)},~\forall K>0,~\phi \in L^2(\mathcal{O}_A).
\end{align*}
Define $\lambda = K+C>C>0$, then
\begin{align*}
||(\lambda I - \mathcal{A}_{d_i,\mu})\phi||_{L^2(\mathcal{O}_A)}\geq (\lambda - C)||\phi||_{L^2(\mathcal{O}_A)},~\forall \lambda>C.
\end{align*}
In the proof of \cite[Theorem 1]{guo_on_1994}, for $\lambda^0>0$ large enough, then $\lambda^0 \in \rho(\mathcal{A}_{d_i,\mu})$, resolvent set of $\mathcal{A}_{d_i,\mu}$. We also know that $(\lambda^0 I - \mathcal{A}_{d_i,\mu})^{-1}$
is a compact mapping from $L^2(\mathcal{O}_A)$ to itself whenever $\lambda^0 \in \rho(\mathcal{A}_{d_i,\mu})$. Thus, $\sigma(\mathcal{A}_{d_i,\mu})=\mathbb{C}\setminus \rho(\mathcal{A}_{d_i,\mu})$ consists only eigenvalue of $\mathcal{A}_{d_i,\mu}$. On the other hand, based on Appendix \ref{Appen A}, there exists unique principal eigenvalue $\lambda_{i,\mu}\in \R$ of $\mathcal{A}_{d_i,\mu}$ so that if $\lambda_0$ is any other eigenvalue of $\mathcal{A}_{d_i,\mu}$, then $Re(\lambda_0)< \lambda_{i,\mu} $. Thus, if  $\lambda^0 >\lambda_{i,\mu}$ and $\lambda^0>0$, then $\lambda^0 \in \rho(\mathcal{A}_{d_i,\mu})$. As the results, one has
\begin{align*}
||(\lambda I - \mathcal{A}_{d_i,\mu})^{-1}\phi||_{L^2(\mathcal{O}_A)}\leq \dfrac{1}{\lambda - C}||\phi||_{L^2(\mathcal{O}_A)},~\forall \lambda>\max\{C,\lambda_{i,\mu}\}.
\end{align*} 
Hence, for any $\lambda>\max\{C,\lambda_{u,\mu},\lambda_{v,\mu}\}$, 
\begin{align*}
||(\lambda I - \textbf{A})^{-1}||_{L^2(\mathcal{O}_A) \times L^2(\mathcal{O}_A)}\leq \dfrac{1}{\lambda - C},
\end{align*}
where $||(u,v)||_{L^2(\mathcal{O}_A)\times L^2(\mathcal{O}_A)} = \max\left\{||u||_{L^2(\mathcal{O}_A)},||v||_{L^2(\mathcal{O}_A)}\right\}$ is an usual product norm. As the results, \textbf{A} generates a positive $C_0$-semi-group $\Phi_{d_u,d_v,\mu}(t)$ thanks to Section \ref{sec:semi-group} from $\mathcal{E}\times \mathcal{E}$ into itself and $\Phi_{d_u,d_v,\mu}(t)=(\Phi_{d_u,\mu},\Phi_{d_v,\mu})$ for any $t>0$, where $\Phi_{d_i,\mu}$, $i=u,v$ is the semi-group generated by $\mathcal{A}_{d_i,\mu}$. Similarly to Section \ref{sec 2.1}, one can show that 
\begin{align*}
\Phi_{d_u,d_v,\mu}(t):\mathcal{B}_{\pi}(\mathcal{O}_A)\times \mathcal{B}_{\pi}(\mathcal{O}_A)\rightarrow \mathcal{B}_{\pi}(\mathcal{O}_A)\times \mathcal{B}_{\pi}(\mathcal{O}_A)
\end{align*}
and $||\Phi_{d_u,d_v,\mu}(t)(u,v)||_{\mathcal{B}_{\pi}(\mathcal{O}_A)\times\mathcal{B}_{\pi}(\mathcal{O}_A)}<M$ if $||(u,v)||_{\mathcal{B}_{\pi}(\mathcal{O}_A)\times\mathcal{B}_{\pi}(\mathcal{O}_A)}<M$ for any $t>0$, which is continuous.

%

On the other hand, \textbf{F} is a locally Lipchitz on $\mathcal{B}_{\pi}(\mathcal{O}_A)\times\mathcal{B}_{\pi}(\mathcal{O}_A)$ to itself. This comes directly from the fact that $F^c$ and $G$ are local Lipchitz functions on $(u,v)\in \mathbb{R}_{+}^2$ and $\mathcal{B}_{\pi}(\mathcal{O}_A) \subset L^{\infty}(\mathcal{O}_A)$.
By similar arguments to those in \cite[Lemma 5.2.4 and Lemma 5.2.5]{magal_theory_2018}, the system \eqref{eq:main} admits a unique mild solution $(u,v)=(u(t),v(t)) =(u(t,x,a; u_0, v_0), v(t,x,a; u_0, v_0))$
on $\overline{\mathcal{O}_A}$ and $0<T^*_c\leq \infty$ is a maximum existence time so that for any $ t\in [0,T^*_c)$, one has
\begin{align}\label{eq:mildc}
(u(t),v(t))=\Phi_{d_u,d_v,\mu}(t) (u_0,v_0)+\int_0^t\Phi_{d_u,d_v,\mu}(t-s)\textbf{F}^c(u(s),v(s))ds,
\end{align}
Similarly, with $\textbf{F}^l(u,v):=(F^l(u,v),G(u,v))$ and $\textbf{F}^r(u,v):=(F^r(u,v),G(u,v))$, there exist maximum existence time $0<T^*_{l},~T^*_{r}\leq \infty$ such that for any $ t\in [0,T^*_l)$, one has
\begin{align*}
&(u^l(t),v^l(t))=\Phi_{d_u,d_v,\mu}(t) (u_0,v_0)+\int_0^t\Phi_{d_u,d_v,\mu}(t-s)\textbf{F}^l(u^l(s),v^l(s))ds,
\end{align*}
and, for any $t\in [0,T^*_r)$,
\begin{align*}
&(u^r(t),v^r(t))=\Phi_{d_u,d_v,\mu}(t) (u_0,v_0)+\int_0^t\Phi_{d_u,d_v,\mu}(t-s)\textbf{F}^r(u^r(s),v^r(s))ds.
\end{align*}
Since $\textbf{F}^l(u,v)\leq \textbf{F}(u,v)\leq \textbf{F}^r(u,v)$, it means that $(u^l(t),v^l(t))$ is the sub-solution of \eqref{eq:mildc} and $(u^l(t),v^l(t))$ is the super-solution. 
One check that, with $\lambda>0$ large enough $(\lambda I - \textbf{A})^{-1}(u_1,u_2)\geq 0$ if $u_1,~u_2\geq 0$ and 
$(u_1,u_2)\mapsto \textbf{F}^l(u_1,u_2)+\lambda (u_1,u_2)$ is positive and non-decreasing in the sense of \cite[Assumption 4.1 and Assumption 4.4]{magal_monotone_2019}. Since $(u^l(t),v^l(t))$ is bounded in local time, it follows from \cite[Theorem 4.3 and Theorem 4.5]{magal_monotone_2019} that $u^l\geq 0$, $v^l\geq 0$ and the comparison principle holds for these solutions on $\overline{\mathcal{O}_A}$ and for any suitable time $t>0$. Same conclusion holds for $(u^r(t),v^r(t)) $. As the results, it follows from \cite[Proposition 5.2 and Proposition 5.4]{magal_monotone_2019} that
\begin{align*}
(0,0)\leq (u^l(t),v^l(t)) \leq (u(t),v(t)) \leq(u^r(t),v^r(t)) 
\end{align*}
for any suitable time $t>0$ since $(u(t),v(t))$ is a super-solution of $(u^l(t),v^l(t))$ and a sub-solution of $(u^r(t),v^r(t))$.


%

\end{proof}

Next, the global-in-time existence result will be shown. Before going to prove it, let $Y_{\alpha}$ be the solution of 
\begin{align*}
\left\{\begin{array}{llll}
Y_a+\mu_{\min}(a)Y=\alpha Y-Y^2,\\
Y(0) = y_0.
\end{array}\right.
\end{align*}
where $\alpha$ and $y_0$ are constants which will be determined later. Then, one has
\begin{align*}
Y_{\alpha}(a)=\dfrac{ {e^{\alpha a -  \displaystyle\int_0^a\mu_{\min}(k)dk}}}{1/y_0+e^{\alpha a -  \displaystyle\int_0^a\mu_{\min}(k)dk}}
=\dfrac{e^{F_\alpha(a)}}{1/y_0+e^{F_\alpha(a)}}=\dfrac{\pi(a)e^{\alpha a}}{1/y_0+\pi(a)e^{\alpha a}}
\end{align*}
where $F_\alpha(a):=\alpha a -  \displaystyle\int_0^a\mu_{\min}(k)dk$. Next, consider $\hat{\beta}=\max\limits_{\overline{\mathcal{O}_A}}\beta$. By similar arguments to the one in \cite[Theorem 14]{delgado_nonlinear_2006} and \cite[Proposition 5.3]{kang_effects_2022}, with $\alpha$ large enough, there exist $y_0>0$ such that
\begin{align*}
&\int_0^Ae^{F_\alpha(a)}da >\dfrac{1}{\hat{\beta}},\\
&\int_0^A\dfrac{e^{F_\alpha(a)}}{1+y_0e^{F_\alpha(a)}}da=\frac{1}{\hat{\beta}}
\end{align*}
Thus,
\begin{align*}
\int_0^A\beta(x,a)Y_{\alpha}(a)da\leq \hat{\beta}\int_0^A\dfrac{e^{F_\alpha(a)}}{1/y_0+e^{F_\alpha(a)}}da=y_0=Y_{\alpha}(0).
\end{align*}
So, with $\alpha>0$ large enough, $Y_{\alpha}$ satisfies upper-bounded of age-structured condition and $0\leq Y_{\alpha}\leq 1$. 

Now, we will construct a super-solution of \eqref{eq:main}. Let us define $\overline{u}:=MY_{\alpha}(a)$, $\overline{v}:=NY_{\alpha}(a)$.

\begin{lemma}\label{Lemma:super}
There exist $M,~N,~\alpha>0$ large enough such that $(\overline{u},\overline{v})$ is a super-solution of \eqref{eq:main}.

\end{lemma}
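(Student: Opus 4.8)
The plan is to exploit the fact that $\overline{u}=MY_\alpha(a)$ and $\overline{v}=NY_\alpha(a)$ depend neither on $t$ nor on $x$, so that $\overline{u}_t=\overline{v}_t=0$, $\Delta_N\overline{u}=\Delta_N\overline{v}=0$, and the Neumann boundary condition holds trivially. The only nontrivial derivative is the age derivative, which the defining ODE $Y_a+\mu_{\min}(a)Y=\alpha Y-Y^2$ turns into $\overline{u}_a=M[(\alpha-\mu_{\min}(a))Y_\alpha-Y_\alpha^2]$, and likewise for $\overline{v}$. Since the original reaction term is not monotone, I would verify the super-solution inequalities against the upper \emph{cooperative} system governed by $F^r(u,v)=[m(x)-n(x)u]u+q(x)v$ and $G(u,v)=\big[\tfrac{p(x)u}{u+v}-q(x)\big]v$; one checks $\partial_v F^r=q\ge0$ and $\partial_u G=\tfrac{pv^2}{(u+v)^2}\ge0$, so this system is monotone, and because $F^c\le F^r$ a super-solution of it dominates solutions of \eqref{eq:main} through the comparison already recorded in Lemma \ref{local}.

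Next I would reduce the two differential inequalities to scalar ones. Dividing the $u$-inequality by $Y_\alpha>0$ and using $\mu\ge\mu_{\min}$ together with $n\ge 0$, it suffices to have $M(\alpha-1-\max_{\overline{\O}} m)\ge (\max_{\overline{\O}}q)\,N$, where I discard the favourable terms $n M^2Y_\alpha$ and $-MY_\alpha\ge -M$; dividing the $v$-inequality by $NY_\alpha$ and using $\tfrac{pM}{M+N}\le \max_{\overline{\O}}p$ and $\mu\ge\mu_{\min}$ reduces it to $\alpha\ge 1+\max_{\overline{\O}}p$. Choosing $N=M$ makes the first condition $\alpha\ge 1+\max_{\overline{\O}} m+\max_{\overline{\O}} q$, \emph{independent of the common magnitude} $M$; hence I would first fix $\alpha$ large enough to satisfy both scalar inequalities and simultaneously to permit the construction of $y_0>0$ used to define $Y_\alpha$, and only afterwards choose the magnitude $M=N$.

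The age-boundary inequalities $\overline{u}(x,0)\ge\int_0^A\beta(x,a)\overline{u}\,da$ and its $v$-analogue follow at once from the already established estimate $\int_0^A\beta(x,a)Y_\alpha(a)\,da\le y_0=Y_\alpha(0)$, after factoring out $M$ (resp. $N$). For the initial-data domination $\overline{u}(0,\cdot,\cdot)\ge u_0$ and $\overline{v}(0,\cdot,\cdot)\ge v_0$, I would use that $u_0,v_0\in\mathcal{B}_\pi(\mathcal{O}_A)$ gives $u_0,v_0\le C_0\pi(a)$ for some $C_0$, and then show $Y_\alpha(a)\ge c_\alpha\pi(a)$ with $c_\alpha>0$. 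The latter holds because $\pi(a)e^{\alpha a}=e^{F_\alpha(a)}$ stays bounded on $[0,A]$ — here condition \ref{cond:cond4}, namely $\int_0^A\mu_{\min}(k)\,dk=\infty$, forces $e^{F_\alpha}\to0$ at $a=A$ — so $Y_\alpha=\tfrac{\pi e^{\alpha a}}{1/y_0+\pi e^{\alpha a}}\ge \big(1/y_0+\sup_{[0,A]}\pi e^{\alpha a}\big)^{-1}e^{\alpha a}\pi\ge c_\alpha\pi$. With $\alpha$ now fixed, $c_\alpha$ is determined, and taking $M=N\ge C_0/c_\alpha$ completes the domination.

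The main obstacle I anticipate is not any single estimate but the bookkeeping that avoids a circular dependence among $M$, $N$, $\alpha$: the differential inequalities constrain $\alpha$ and the ratio $N/M$, while the initial-data bound constrains the magnitude $M$ through $c_\alpha$, which itself depends on $\alpha$. Choosing $N=M$ decouples these, since the leading $M$ cancels in the $u$-inequality, so $\alpha$ may be fixed first and the magnitude scaled up afterwards. A secondary point requiring care is conceptual rather than computational: because \eqref{eq:main} lacks a comparison principle, the relevant notion of super-solution must be taken with respect to the monotone upper system $(F^r,G)$, and one must confirm $F^c\le F^r$ so that this construction genuinely bounds the solution of the original problem from above.
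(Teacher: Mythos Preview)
Your proof is correct and follows the same overall scheme as the paper: verify the super-solution inequalities against the monotone upper system $(F^r,G)$, exploit the ODE $Y_a+\mu_{\min}Y=\alpha Y-Y^2$ to handle the age derivative, and use the built-in estimate $\int_0^A\beta Y_\alpha\,da\le Y_\alpha(0)$ for the age-boundary condition. The difference is in how you couple the constants. The paper picks $K>1$ with $\sup p\le K\inf q$, sets $N=2KM$, and then the $v$-inequality $G(\overline u,\overline v)\le pMY_\alpha-qNY_\alpha\le q(KM-N)Y_\alpha\le0$ holds for \emph{every} $\alpha>1$; the burden of making $\alpha$ large falls only on the $u$-inequality. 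You instead take $N=M$ and load both inequalities onto $\alpha$, obtaining the clean sufficient conditions $\alpha\ge 1+\max m+\max q$ and $\alpha\ge 1+\max p$. Your route is arguably tidier (you never need the assumption $\inf q>0$ in this step), while the paper's choice $N=2KM>M$ is convenient later when they want an explicit ordering $\overline v>\overline u$.

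One remark: your discussion of initial-data domination $MY_\alpha\ge u_0$, $NY_\alpha\ge v_0$ and the lower bound $Y_\alpha\ge c_\alpha\pi$ is extraneous to Lemma~\ref{Lemma:super} as stated---the lemma asserts only the super-solution property, with no reference to initial data. That domination appears instead as the \emph{hypothesis}~\eqref{bounded:above} of Theorem~\ref{theo:2}, so there is no circular-dependence obstacle here: for the lemma you may fix any $M=N>0$ once $\alpha$ is chosen, and the later results simply assume the initial data already lie below $(MY_\alpha,NY_\alpha)$. Your careful decoupling is not wrong, just unnecessary for this particular statement.
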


\begin{proof}
Consider $M>0$ large enough that will be determined later. Let $K>1$ large enough so that
\begin{align*} 
\sup_{x\in \overline{\O}} p(x) \leq K \inf_{x\in \overline{\O}} q(x),
\end{align*}
Choose $N=2KM>M>0$. 
For $\alpha>1$ large enough so that $Y_{\alpha}$ satisfying the upper-bound age-structured condition, one have
{\small
\begin{align*}
F^r(\overline{u},\overline{v})-\overline{u}_t+\mathcal{A}_{d_u,\mu}\overline{u} 
&\leq \hat{m} MY_{\alpha}(a)-\underline{n}M^2Y_{\alpha}^2(a) -M(\alpha Y_{\alpha}-Y_{\alpha}^2)+\hat{q}2KMY_{\alpha}(a)\\
&=MY_{\alpha}(a)[\hat{m}+(1-\underline{n}M)MY_{\alpha}(a)-\alpha +2\hat{q}K]\\
&\leq MY_{\alpha}(a)[\hat{m}-\alpha +2\hat{q}K]\leq 0,
\end{align*}}
provided that $M>\dfrac{1}{\hat{n}}$ and $\alpha>\hat{m}+2\hat{q}K$ large enough. Here, we use $\hat{m}=\max\limits_{\overline{\O}}m,~\underline{n}=\min\limits_{\overline{\O}}n>0,~\hat{q}=\max\limits_{\overline{\O}}q$. On the other hand, with these constants 
\begin{align*}
\mathcal{A}_{d_v,\mu}\overline{v} = -\overline{v}_a - \mu(x,a)\overline{v}&\leq -N(Y_{\alpha})_a- \mu_{\min}(a)NY_{\alpha}\\
&=NY_{\alpha}^2-\alpha NY_{\alpha} \leq NY_{\alpha}-\alpha NY_{\alpha}=NY_{\alpha}(1 - \alpha)\leq 0
\end{align*}
and
\begin{align*}
G(\overline{u},\overline{v})-\overline{v}_t+\mathcal{A}_{d_v,\mu}\overline{v}&\leq p(x)MY_{\alpha}(a)-q(x)NY_{\alpha}(a)\\
&\leq q(x)Y_{\alpha}(a)(KM-N)=-KMq(x)Y_{\alpha}(a)\leq 0.
\end{align*}
since $Y_{\alpha}\leq 1$. Furthermore,
\begin{align*}
\int_0^A\beta(x,a)\overline{u}(t,x,a)da=\int_0^A\beta(x,a)MY_{\alpha}(a)da \leq MY_{\alpha}(0)= \overline{u}(t,x,0).
\end{align*}
Similar to $\overline{v}$. Thus, $(\overline{u},\overline{v})$ is a super-solution of \eqref{eq:main}.
\end{proof}



We are in the position to show the global-time existence results with bounded property in time. 



\begin{theorem}\label{theo:2}
 Assume \ref{cond:cond1} to \ref{cond:cond5} hold and  $u_0,v_0\in  L^{\infty}_{+}(\mathcal{O}_A)$. Suppose further that
 \begin{align}\label{bounded:above}
0\leq u_0\leq MY_{\alpha},~0\leq v_0\leq NY_{\alpha} \text{ on }\overline{\mathcal{O}_A}
 \end{align}
where $M$, $N$ and $\alpha$ are determined in Lemma \ref{Lemma:super}. Then, there exists a unique global non-negative solution $(u,v)=(u(t),v(t))=(u(t,x,a; u_0, v_0), v(t,x,a; u_0, v_0))$ of \eqref{eq:main} on $\overline{\mathcal{O}_{A}}$ starting by the initial condition $(u_0,v_0)$. Moreover, it is bounded, i.e.
 \begin{align*}
 0\leq u(t,x,a;u_0,v_0)\leq M,~0\leq v(t,x,a;u_0,v_0)\leq N 
 \end{align*}
 on $\overline{\mathcal{O}_{A}}$ and for any $t>0$ if $u_0\not\equiv 0$ and $v_0\not\equiv 0$. Furthermore,  suppose that $\supp(u_0)\cap \mathcal{O}_{A_2}\neq \emptyset$ and  $\supp(v_0)\cap \mathcal{O}_{A_2}\neq \emptyset$, there exists $T^*>0$ such that
\begin{align*}
u(t)>0,~v(t)>0,~\forall t\geq  T^*+A,\text{ on }\overline{\O}\times [0,A).
\end{align*}

\end{theorem}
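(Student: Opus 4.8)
The plan is to bootstrap the local theory of Lemma~\ref{local} to a global, bounded solution by squeezing the non-monotone solution between the two flanking monotone systems, and then to read off eventual positivity from the lower system via a linear age-structured comparison. First I would note that $0\le u_0\le MY_\alpha$ and $0\le v_0\le NY_\alpha$ put $(u_0,v_0)\in\mathcal{B}_{\pi}(\mathcal{O}_A)\times\mathcal{B}_{\pi}(\mathcal{O}_A)$, since $LY_\alpha=e^{\alpha a}/(1/y_0+\pi(a)e^{\alpha a})$ is continuous and bounded on $[0,A]$; hence Lemma~\ref{local} yields a unique mild solution $(u,v)$ on a maximal interval $[0,T^*_c)$ together with the sandwich $(0,0)\le(u^l,v^l)\le(u,v)\le(u^r,v^r)$, where $(u^l,v^l)$ and $(u^r,v^r)$ solve the \emph{monotone} $\textbf{F}^l$- and $\textbf{F}^r$-systems. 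Because Lemma~\ref{Lemma:super} verifies $(\overline u,\overline v)=(MY_\alpha,NY_\alpha)$ as a super-solution of the $\textbf{F}^r$-system (its differential inequalities are written with $F^r$ and $G$) and $(u_0,v_0)\le(\overline u,\overline v)$, the comparison principle for that monotone system (Definition~\ref{def:5}, \cite{magal_monotone_2019}) gives $(u^r,v^r)\le(\overline u,\overline v)$; with the sandwich and $Y_\alpha\le1$ this produces the uniform bound $0\le u\le MY_\alpha\le M$, $0\le v\le NY_\alpha\le N$ on all of $[0,T^*_c)$. This bound controls $\|(u,v)\|_{\mathcal{B}_{\pi}\times\mathcal{B}_{\pi}}$ uniformly in time, so the blow-up alternative for the mild semiflow (\cite{magal_theory_2018}) forces $T^*_c=\infty$, which establishes global existence and the asserted bounds.

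For the eventual positivity I would drop the super-solution and work only with the lower solution $(u^l,v^l)\le(u,v)$. From $0\le u^l\le M$, $0\le v^l\le N$ one gets $F^l(u^l)=[m-p-nu^l]u^l\ge -c_0u^l$ and $G(u^l,v^l)=[\,pu^l/(u^l+v^l)-q\,]v^l\ge -c_1v^l$ for constants $c_0,c_1>0$ (using $pu^l/(u^l+v^l)\ge0$), so $u^l$ and $v^l$ are super-solutions of the \emph{linear} age-structured equations generated by $\mathcal{A}_{d_u,\mu+c_0}$ and $\mathcal{A}_{d_v,\mu+c_1}$; therefore $u(t)\ge u^l(t)\ge\Phi_{d_u,\mu+c_0}(t)u_0$ and $v(t)\ge v^l(t)\ge\Phi_{d_v,\mu+c_1}(t)v_0$. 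The shifted mortalities $\mu+c_0,\mu+c_1$ still satisfy \ref{cond:cond4} and the renewal bound in \ref{cond:cond5} (the extra factor $e^{-c_ia}\le1$ only shrinks the demographic integral), so the eventual-positivity argument of Section~\ref{sec:semi-group}---a Langlais-type lower bound on the newborn density followed by the strong positivity of $T_{d\Delta_N}$ propagated along characteristics---applies to each linear semigroup. Since $\supp(u_0)\cap\mathcal{O}_{A_2}\neq\emptyset$ and $\supp(v_0)\cap\mathcal{O}_{A_2}\neq\emptyset$, this produces $T^*_{d_u},T^*_{d_v}>0$ after which the corresponding profiles are strictly positive on $\overline{\O}\times[0,A)$ once one age-span $A$ has elapsed; taking $T^*=\max\{T^*_{d_u},T^*_{d_v}\}$ gives $u(t),v(t)>0$ for all $t\ge T^*+A$.

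I expect the main obstacle to be exactly this last step: transferring positivity through the quotient nonlinearity. A maximum principle cannot be applied to \eqref{eq:main} directly because $\textbf{F}$ is non-monotone, so the positivity must be routed first through the monotone lower system $\textbf{F}^l$ and then through a mortality-shifted \emph{linear} comparison, checking along the way that adding the bounded constants $c_0,c_1$ to $\mu$ destroys neither conditions \ref{cond:cond4}--\ref{cond:cond5} nor the Langlais-type lower bound on newborns. The remaining ingredients---the uniform upper bound, the blow-up alternative, and uniqueness---are inherited directly from Lemmas~\ref{local} and~\ref{Lemma:super}.
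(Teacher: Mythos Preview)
Your proposal is correct and follows essentially the same route as the paper. The only cosmetic difference is in the positivity step: the paper writes the $\lambda$-shifted mild formula for $(u^l,v^l)$ and drops the non-negative Duhamel integral to get $(u^l(t),v^l(t))\ge e^{-\lambda t}\Phi_{d_u,d_v,\mu}(t)(u_0,v_0)$, whereas you phrase the same bound as a comparison with the linear semigroups $\Phi_{d_u,\mu+c_0}$ and $\Phi_{d_v,\mu+c_1}$. Since a constant shift of the mortality satisfies $\Phi_{d,\mu+c}(t)=e^{-ct}\Phi_{d,\mu}(t)$ (the shift commutes with both $\partial_t$ and $\partial_a$ and leaves the renewal condition untouched), the two lower bounds coincide and your concern about re-checking \ref{cond:cond4}--\ref{cond:cond5} for $\mu+c_i$ is unnecessary: eventual positivity of $\Phi_{d,\mu+c_i}(t)$ is inherited immediately from that of $\Phi_{d,\mu}(t)$.
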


\begin{proof}

We can show that $u_0,~v_0\in \mathcal{B}_{\pi}(\mathcal{O}_A)$. Then, it follows from the fact that $(\overline{u},\overline{v})=(MY_\alpha,NY_\alpha)$ is a super-solution of \eqref{eq:main} and the comparison principle for age-structured model that
\begin{align*}
&0\leq u(t,x,a;u_0,v_0)\leq u^r(t,x,a;u_0,v_0)\leq MY_{\alpha}\leq M,\\
&0\leq v(t,x,a;u_0,v_0)\leq u^r(t,x,a;u_0,v_0) \leq  NY_{\alpha}\leq N=2KM,
\end{align*}
for any $(x,a)\in \overline{\mathcal{O}_A}$. In addition, one can prove that
\begin{align*}
\lim_{a\rightarrow A}u(t,x,a;u_0,v_0)=0,~
\lim_{a\rightarrow A}v(t,x,a;u_0,v_0)=0,
\end{align*}  
uniformly with $x\in\overline{\O}$ and $t>0$. Thus, $(u(t),v(t))$ exists globally in time. Similarly, $(u^l(t),v^l(t))$ also exists globally in time.

Next, thanks to Section \ref{sec:semi-group}, we can check that a modification formula for the mild solution in \eqref{eq:mildc} as follows
{\small
\begin{align*}
(u(t),v(t))&\geq (u^l(t),v^l(t))\\&=e^{-\lambda t}\Phi_{d_u,d_v,\mu}(t) (u_0,v_0)+\int_0^te^{-\lambda (t-s)}\Phi_{d_u,d_v,\mu}(t-s)\textbf{F}^l_{\lambda}(u(s),v(s))ds
\\&\geq e^{-\lambda t}\Phi_{d_u,d_v,\mu}(t) (u_0,v_0) \text{ on }\overline{\mathcal{O}_A},~\forall t>0,
\end{align*}}\\
where $\lambda>0$ large enough so that $\textbf{F}_{\lambda}^l(u,v):=\textbf{F}^l(u,v)+\lambda (u,v)\geq (0,0)$ for any $0\leq u,~v\leq N$. 

 Suppose that $\supp(u_0)\cap \mathcal{O}_{A_2}\neq \emptyset$ and  $\supp(v_0)\cap \mathcal{O}_{A_2}\neq \emptyset$, there exists $T^*_{d_u},~T^*_{d_v}>0$ such that 
\begin{align*}
\Phi_{d_u,\mu}(t)(u_0)>0,~\forall t>T^*_{d_u}+A,~\Phi_{d_v,\mu}(t)(v_0)>0,~\forall t>T^*_{d_u}+A
\end{align*}
on $\overline{\O}\times [0,A)$.
As the results, one can check that
\begin{align*}
\Phi_{d_u,d_v,\mu}(t) (u_0,v_0)>(0,0),~\forall t>T^*+A, \text{ on $\overline{\O}\times [0,A)$}
\end{align*}
where $T^*=\max\{T^*_{d_u},T^*_{d_v}\}$. Thus, one has $u(t)>0$, $v(t)>0$ $\text{on }\overline{\O}\times [0,A)$, $\forall t>T^*+A$.

This concludes the proof.

\end{proof}

\begin{remark}
When $u_0,~v_0 \in C_c(\overline{\O}\times [0,A))$, the space of continuous functions having compact support subset of $\overline{\O}\times [0,A)$, one can establish the continuity of $(t,x,a)\mapsto u(t,x,a;u_0,v_0)$, $(t,x,a)\mapsto v(t,x,a;u_0,v_0)$  on $(0,\infty)\times \overline{\mathcal{O}_A}$, except possibly on measure-zero sets such as $\{t=a\}$, thanks to characteristics line formula \eqref{charaterics-line}. If we suppose further that $u_0,~v_0$ satisfy the age-structured condition, the continuity holds over the entire domain $(0,\infty)\times \overline{\mathcal{O}_A}$.
\end{remark}


\begin{remark}\label{remark2}
The condition \eqref{bounded:above} can be replaced by an another bounded time-independent super-solution $(\overline{u},\overline{v})$ of \eqref{eq:main}. In this case, we assume 
\begin{align*}
0\leq u_0\leq \overline{u};~0\leq v_0\leq \overline{v}.
\end{align*}
By comparison principle, we also obtain that 
\begin{align*}
0\leq u(t,x,a;u_0,v_0)\leq \overline{u}\leq M_0;\quad 0\leq v(t,x,a;u_0,v_0)\leq \overline{v}\leq M_0
\end{align*}
for some constant $M_0>0$. As the results, the solution exists globally in time.

\end{remark}

\begin{remark}\label{remark3}
If we consider $A_3$ satisfies $A_2<A_3<A$, we can remove the condition \eqref{bounded:above} to obtain the global-time existence on $\overline{\O}\times [0,A_3]$. Since $Y_{\alpha}(a)>0$ on $\overline{\O}\times [0,A_3]$, one can find $N>0$ large enough such that
\begin{align*}
0\leq u_0\leq NY_{\alpha}(a),~0\leq v_0\leq NY_{\alpha}(a) \text{ on }\overline{\O}\times [0,A_3].
\end{align*} 

\end{remark}

\subsection{\bf Comparison principle for the age-structured model}

This section states comparison principle for some age-structured model that will be used throughout this paper. 

There are two type of age-structured comparison principles based on the nonlinear term that we state here: the monotone and super-sub comparison principle.

\subsection*{Monotone comparison principle}

The statements are based on the work of \cite[Theorem 4.3 and 4.5]{magal_monotone_2019} and \cite[Section 5]{ducrot_age-structured_2024}.

\begin{proposition}[Monotone comparison principle for one equation]\label{prof:comparoneeq}
Assume \ref{cond:cond3} to \ref{cond:cond5} hold. Let $F:\R_+\times \overline{\mathcal{O}_A}\times\R\mapsto \R$, $F=F(t,x,a,u)$ be a continuous function and local Lipschitz in $u$, independent with $t$, $x$ and $a$ and $F(t,x,a,0)=0$. Furthermore, $F$ satisfies the \textit{monotonicity} condition given in Definition \ref{def:4}.
Then, the following equation 
\begin{align}\label{eq:KPP1}
\left\{\begin{array}{lll}
w_t(t,x,a)=\mathcal{A}_{d,\mu}w(t,x,a)+F(t,x,a,w(t,x,a)),&t>0,~(x,a)\in \mathcal{O}_A,\\
w(0,x,a)=w_0(x,a),&(x,a)\in \mathcal{O}_A,&
\end{array}\right.
\end{align}
admits the comparison principle, that is for any $w_0,~w_1\in \mathcal{B}_{\pi}(\mathcal{O}_A)$ and $0\leq w_0\leq w_1$, one has
\begin{align*}
0\leq w(t,x,a;w_0)\leq w(t,x,a;w_1),~(x,a)\in \mathcal{O}_A\text{ and for any possible }t>0,
\end{align*}
where $w(t,x,a;w_i)$, $i=1,2$ are the solutions of \eqref{eq:KPP1} with initial condition $w(0,x,a;w_i)=w_i(x,a)$, $i=1,2$. The same property holds for the super-solution and sub-solution.
\end{proposition}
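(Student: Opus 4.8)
The plan is to reduce \eqref{eq:KPP1} to an equation with a \emph{nonnegative, nondecreasing} reaction term — for which the abstract monotone comparison results of \cite[Theorem 4.3 and 4.5]{magal_monotone_2019} apply directly — and then to transfer the conclusion back through an exponential shift. First I would fix $\sigma>0$ and secure an a priori bound: by the local existence theory (Lemma \ref{local}) together with the super-solution built in Lemma \ref{Lemma:super}, the solutions $w(\cdot\,;w_0)$ and $w(\cdot\,;w_1)$ issuing from $0\leq w_0\leq w_1$ in $\mathcal{B}_{\pi}(\mathcal{O}_A)$ stay inside an order interval $[0,M_0]$ on $[0,\sigma]$, with $M_0>0$ depending only on $\|w_1\|_{\mathcal{B}_{\pi}(\mathcal{O}_A)}$ and $\sigma$. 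With this $M_0$ and $\sigma$ fixed, the monotonicity hypothesis (Definition \ref{def:4}) furnishes $\lambda>0$ so that $\tilde F_\lambda(t,x,a,u):=F(t,x,a,u)+\lambda u$ is nonnegative and nondecreasing in $u$ on $[0,M_0]$, uniformly in $(x,a)\in\overline{\mathcal{O}_A}$ and $t\in[0,\sigma]$.

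Next I would rewrite the equation as $w_t=(\mathcal{A}_{d,\mu}-\lambda)w+\tilde F_\lambda(t,x,a,w)$ and recall from Section \ref{sec:semi-group} that $\mathcal{A}_{d,\mu}-\lambda$ generates the positive $C_0$-semigroup $e^{-\lambda t}\Phi_{d,\mu}(t)$. The associated mild formulation reads
\begin{align*}
w(t)=e^{-\lambda t}\Phi_{d,\mu}(t)w_0+\int_0^t e^{-\lambda(t-s)}\Phi_{d,\mu}(t-s)\tilde F_\lambda(s,\cdot,\cdot,w(s))\,ds.
\end{align*}
Because $\Phi_{d,\mu}(t)$ is order preserving and $\tilde F_\lambda\geq 0$ is nondecreasing on $[0,M_0]$, the right-hand side defines a map monotone in both the datum $w_0$ and the unknown $w$. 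A monotone Picard iteration started from $0$ then produces a nondecreasing sequence converging to the unique mild solution, which simultaneously gives $w(t\,;w_0)\geq 0$ and, by comparing the two iteration schemes attached to $w_0\leq w_1$, the inequality $w(t\,;w_0)\leq w(t\,;w_1)$ on $\mathcal{O}_A$ for $t\in[0,\sigma]$; equivalently, one invokes \cite[Theorem 4.3 and 4.5]{magal_monotone_2019} once the hypotheses (positivity of the resolvent of $\mathcal{A}_{d,\mu}-\lambda$, nonnegativity and monotonicity of $\tilde F_\lambda$) have been verified. Covering $[0,\infty)$ by successive windows extends the conclusion to every admissible $t$.

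Finally, for a super-solution $\overline{w}$ (and symmetrically a sub-solution) the defining differential inequality integrates, through the same variation-of-constants identity and the positivity of $e^{-\lambda t}\Phi_{d,\mu}(t)$, into
\begin{align*}
\overline{w}(t)\geq e^{-\lambda t}\Phi_{d,\mu}(t)\overline{w}(0)+\int_0^t e^{-\lambda(t-s)}\Phi_{d,\mu}(t-s)\tilde F_\lambda(s,\cdot,\cdot,\overline{w}(s))\,ds,
\end{align*}
so that comparing with the mild identity for $w$ via the monotone iteration yields $w\leq\overline{w}$ whenever $w(0)\leq\overline{w}(0)$. The main obstacle I anticipate is that the monotonicity in Definition \ref{def:4} is only \emph{local}: the shift $\lambda$ depends on the sup-bound $M_0$, so one must pin down a uniform $\mathcal{B}_{\pi}(\mathcal{O}_A)$-bound on all competing solutions over each finite time window \emph{before} selecting $\lambda$, and then ensure $0\leq w\leq M_0$ is preserved throughout the iteration. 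This is precisely where the super-solution $MY_\alpha$ of Lemma \ref{Lemma:super} and the continuation argument of Theorem \ref{theo:2} are indispensable.
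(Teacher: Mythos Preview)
Your approach is correct and is precisely the route the paper takes: the paper does not give an independent proof of this proposition but simply records that it follows from \cite[Theorem 4.3 and 4.5]{magal_monotone_2019} (together with \cite[Section 5]{ducrot_age-structured_2024}), and your $\lambda$-shift plus monotone-iteration argument is exactly the mechanism behind those results. One small caveat: Lemma \ref{local} and Lemma \ref{Lemma:super} are written for the specific SIS nonlinearities $(F^c,G)$, not for a generic $F$ as in the proposition, so you cannot literally invoke them for the a priori bound $M_0$; instead, the phrase ``for any possible $t>0$'' in the statement already restricts to the joint existence interval, on which local-in-time boundedness in $\mathcal{B}_\pi(\mathcal{O}_A)$ is part of the local existence theory, and that suffices to pick $\lambda$ on each compact subinterval.
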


We present an analogue of the monotone comparison principle, a useful tool for future work, as follows

\begin{proposition}[Monotone comparison principle for system]\label{prof:comparsys}
Assume \ref{cond:cond3} to \ref{cond:cond5} hold. Let  $\textbf{A}$ be the operator in Lemma \ref{local} and $\textbf{G}:\overline{\mathcal{O}_A}\times\R^2\mapsto \R^2$, $\textbf{G}=\textbf{G}(x,a,u)$ be a continuous function and local Lipschitz in $u$, independent with $x$ and $a$, $\textbf{G}(x,a,0)=0$. Furthermore, $\textbf{G}$ satisfies the \textit{monotonicity} condition given in Definition \ref{def:5}.
Then, the following equation 
\begin{align}\label{eq:KPP3}
\left\{\begin{array}{lll}
\textbf{u}_t=\textbf{A}\textbf{u}+\textbf{G}(\textbf{u}),&t>0,\\
\textbf{u}(0)=\textbf{u}_0,
\end{array}\right.
\end{align}
admits the comparison principle, that is for any $\textbf{u}_0,~\textbf{u}_1\in \mathcal{B}_{\pi}(\mathcal{O}_A)\times   \mathcal{B}_{\pi}(\mathcal{O}_A)$ and $0\leq \textbf{u}_0\leq \textbf{u}_1$, one has
\begin{align*}
0\leq \textbf{u}(t;\textbf{u}_0)\leq \textbf{u}(t;\textbf{u}_1)\text{ for any possible }t>0,
\end{align*}
where $\textbf{G}(\textbf{u})(a)(x)=\textbf{G}(x,a,\textbf{u})$ and $\textbf{u}(t;\textbf{u}_i)$, $i=1,2$ are the solutions of \eqref{eq:KPP3} with initial condition $\textbf{u}(0;\textbf{u}_i)=\textbf{u}_i$, $i=1,2$. The same property holds for the super-solution and sub-solution.
\end{proposition}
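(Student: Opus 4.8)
The plan is to follow the abstract monotone-semiflow framework of Magal and Ruan \cite[Theorem 4.3 and 4.5]{magal_monotone_2019}, exactly as in the one-equation case of Proposition \ref{prof:comparoneeq}, transported to the product space $\mathcal{E}\times\mathcal{E}$. The starting point is the mild formulation: by Lemma \ref{local}, $\textbf{A}$ generates a positive $C_0$-semigroup $\Phi_{d_u,d_v,\mu}(t)=(\Phi_{d_u,\mu}(t),\Phi_{d_v,\mu}(t))$ which maps $\mathcal{B}_{\pi}(\mathcal{O}_A)\times\mathcal{B}_{\pi}(\mathcal{O}_A)$ into itself, and the solution of \eqref{eq:KPP3} is the unique mild solution
\begin{align*}
\textbf{u}(t;\textbf{u}_i)=\Phi_{d_u,d_v,\mu}(t)\textbf{u}_i+\int_0^t\Phi_{d_u,d_v,\mu}(t-s)\textbf{G}(\textbf{u}(s;\textbf{u}_i))\,ds,\quad i=0,1.
\end{align*}

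First I would fix the time interval and produce a priori bounds. Since $\textbf{u}_0,\textbf{u}_1\in\mathcal{B}_{\pi}(\mathcal{O}_A)\times\mathcal{B}_{\pi}(\mathcal{O}_A)\subset L^{\infty}(\mathcal{O}_A)\times L^{\infty}(\mathcal{O}_A)$ and $0\le\textbf{u}_0\le\textbf{u}_1$, the local theory of Lemma \ref{local} together with the $\mathcal{B}_{\pi}$-boundedness of $\Phi_{d_u,d_v,\mu}(t)$ gives, on any compact subinterval $[0,T]$ of the common interval of existence, a uniform bound $0\le\textbf{u}(t;\textbf{u}_i)\le(M_0,M_1)$ for suitable $M_0,M_1>0$. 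With these $M_0,M_1$ in hand, Definition \ref{def:5} supplies $\lambda>0$ such that $\textbf{G}_\lambda(x,a,\textbf{w}):=\textbf{G}(x,a,\textbf{w})+\lambda\textbf{w}$ is nonnegative and nondecreasing on $0\le\textbf{w}\le(M_0,M_1)$. I would then rewrite \eqref{eq:KPP3} with the shifted generator $\textbf{A}-\lambda I$, whose semigroup is $e^{-\lambda t}\Phi_{d_u,d_v,\mu}(t)$ and is still positive, so that
\begin{align*}
\textbf{u}(t;\textbf{u}_i)=e^{-\lambda t}\Phi_{d_u,d_v,\mu}(t)\textbf{u}_i+\int_0^te^{-\lambda(t-s)}\Phi_{d_u,d_v,\mu}(t-s)\textbf{G}_\lambda(\textbf{u}(s;\textbf{u}_i))\,ds.
\end{align*}

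At this point the resolvent positivity $(\lambda I-\textbf{A})^{-1}\ge 0$ from Lemma \ref{local} and the order-preserving, positivity-preserving character of $\textbf{G}_\lambda$ are precisely Assumptions 4.1 and 4.4 of \cite{magal_monotone_2019}, so \cite[Theorem 4.3 and 4.5]{magal_monotone_2019} yield both nonnegativity of the mild solution and its monotone dependence on the data, giving $0\le\textbf{u}(t;\textbf{u}_0)\le\textbf{u}(t;\textbf{u}_1)$ on $[0,T]$. Concretely, the Picard iteration generated by the integral map above is built from the positive semigroup and the nondecreasing $\textbf{G}_\lambda$, hence each iterate preserves the order relation between the two data and the limit is the mild solution. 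Since $T$ is arbitrary below the maximal existence time, the inequality propagates to the whole interval of existence; running the same argument with a time-independent ordered pair delivers the stated sub-/super-solution version.

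The main obstacle is that the monotonicity threshold $\lambda$ in Definition \ref{def:5} is \emph{not} uniform: it depends on $(M_0,M_1)$, which themselves depend on the a priori $L^{\infty}$-size of the solutions. The argument must therefore establish the finite-time $\mathcal{B}_{\pi}$-bounds \emph{before} selecting $\lambda$, and verify that the shift construction is internally consistent on each $[0,T]$. A secondary subtlety is that $\mathcal{B}_{\pi}$ data degenerate like $\pi(a)$ as $a\to A$, so the order relations and the positivity of $\Phi_{d_u,d_v,\mu}(t)$ have to be read in the weighted space $\mathcal{B}_{\pi}$ rather than in plain $L^{\infty}$; this is exactly where the product-space positivity recorded in Lemma \ref{local} and the compatibility of the age-renewal boundary condition with the monotone structure, in the spirit of \cite[Section 5]{ducrot_age-structured_2024}, are invoked.
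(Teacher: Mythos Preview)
Your proposal is correct and follows exactly the route the paper indicates: the paper does not spell out a proof for this proposition but simply states that it is ``based on the work of \cite[Theorem 4.3 and 4.5]{magal_monotone_2019} and \cite[Section 5]{ducrot_age-structured_2024}'', and your argument is precisely the natural elaboration of how those results are applied in the product space $\mathcal{E}\times\mathcal{E}$ with the shifted nonlinearity $\textbf{G}_\lambda$. Your identification of the main subtlety---that $\lambda$ must be chosen after securing the a priori $L^\infty$-bounds on $[0,T]$---is accurate and is implicitly handled in the paper via the $\mathcal{B}_\pi$-framework established in Lemma~\ref{local} and Theorem~\ref{theo:2}.
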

One can view the comparison principle as follows
for any $\textbf{u}_0=(u_{0},v_{0}),\textbf{u}_1=(u_1,v_1)\in \mathcal{B}_{\pi}(\mathcal{O}_A)$ and $0\leq \textbf{u}_0\leq \textbf{u}_1$, one has
\begin{align*}
&0\leq u(t,x,a;u_{0},v_{0})\leq u(t,x,a;u_{1},v_{1}),\\ 
&0\leq v(t,x,a;u_{0},v_{0})\leq v(t,x,a;u_{1},v_{1}),
\end{align*}
for any $(x,a)\in\mathcal{O}_A$ and any possible $t>0$, where $(u(t,x,a;u_{i},v_{i})),v(t,x,a;u_{i},v_i))$, $i=0,1$ are the solution of the system \eqref{eq:main} with the initial condition $(u(0,x,a;u_{i},v_{i})),v(0,x,a;u_{i},v_i))=(u_i(x,a)),v_i(x,a))$, $i=0,1$. 


The systems associated with this type of function $F$ and $\textbf{G}$ are called \textit{monotone systems}.

\subsection*{Super-sub comparison principle}
Define $\textbf{F}^c=\textbf{F}$. Suppose there exists  $\textbf{F}^{-}$ and $\textbf{F}^+$ such that
\begin{align*}
\textbf{F}^{-}\leq \textbf{F}^c\leq \textbf{F}^+
\end{align*}
These three functions are distinct and $\textbf{F}^-$ and $\textbf{F}^+$ are monotone. Due to non-monotone structure of $\textbf{F}^c$ of our system \eqref{eq:main} (see \eqref{boundedblow}), the comparison principle can only be stated in the following form based on the following results \cite[Proposition 5.2 and 5.4]{magal_monotone_2019}, see also \cite[Lemma 2.1 of Chapter 8]{wu_theory_1996}.

\begin{proposition}[Super-sub comparison principle for system]\label{prof:comparsys1}
Assume \ref{cond:cond3} to \ref{cond:cond5} hold. Let  $\textbf{A}$ and $\textbf{F}$ be the operator and function in Lemma \ref{local}. Consider the following systems
\begin{align}\label{eq:KPP2}
\left\{\begin{array}{lll}
\textbf{u}_t=\textbf{A}\textbf{u}+\textbf{F}^{i}(\textbf{u}),&t>0,\\
\textbf{u}(0)=\textbf{u}_0,
\end{array}\right.
\end{align}
for $i=-,c,+$. Then, the system with $i=c$ or $\textbf{F}^i=\textbf{F}$ admits the comparison principle, that is for any $\textbf{u}_0\in \mathcal{B}_{\pi}(\mathcal{O}_A)\times   \mathcal{B}_{\pi}(\mathcal{O}_A)$ and $0\leq \textbf{u}_{-}(0)\leq \textbf{u}_0\leq \textbf{u}_{+}(0)$, one has
\begin{align*}
0\leq \textbf{u}_{-}(t)\leq \textbf{u}(t;\textbf{u}_0)\leq \textbf{u}_{+}(0)\text{ for any possible }t>0,
\end{align*}
where $\textbf{u}_{-}=\textbf{u}_{-}(t)$ is a sub-solution of \eqref{eq:KPP2} with $i=-$, $\textbf{u}_{+}=\textbf{u}_{+}(t)$ is a super-solution of \eqref{eq:KPP2} with $i=+$ and $\textbf{u}(t;\textbf{u}_0)$ is the solution of \eqref{eq:KPP2} with $i=c$, corresponding to the initial condition $\textbf{u}(0;\textbf{u}_0)=\textbf{u}_0$.
\end{proposition}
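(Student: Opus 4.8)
The plan is to leverage the sandwich $\textbf{F}^- \le \textbf{F}^c \le \textbf{F}^+$ together with the monotone comparison principle already proved in Proposition~\ref{prof:comparsys}, applied to the two auxiliary \emph{monotone} systems \eqref{eq:KPP2} with $i=-$ and $i=+$. The crucial observation is that the genuine solution $\textbf{u}(\cdot;\textbf{u}_0)$ of the $i=c$ system plays a double role: being squeezed between the two monotone nonlinearities, it is simultaneously a super-solution of the lower system and a sub-solution of the upper one, even though $\textbf{F}^c$ itself is not monotone (see \eqref{boundedblow}).

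First I would record the pointwise ordering in mild form. Writing $\textbf{u}=\textbf{u}(\cdot;\textbf{u}_0)$ for the mild solution furnished by Lemma~\ref{local}, from $\textbf{u}_t=\textbf{A}\textbf{u}+\textbf{F}^c(\textbf{u})$ and $\textbf{F}^-(\textbf{u})\le \textbf{F}^c(\textbf{u})\le \textbf{F}^+(\textbf{u})$ one obtains $\textbf{u}_t\ge \textbf{A}\textbf{u}+\textbf{F}^-(\textbf{u})$ and $\textbf{u}_t\le \textbf{A}\textbf{u}+\textbf{F}^+(\textbf{u})$; that is, $\textbf{u}$ is a super-solution of \eqref{eq:KPP2} with $i=-$ and a sub-solution of \eqref{eq:KPP2} with $i=+$. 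These inequalities are to be read in the variation-of-constants (Duhamel) sense, converting the pointwise inequality of the nonlinearities into an inequality of the integral iterates by invoking the positivity of the semigroup $\Phi_{d_u,d_v,\mu}$ established in Section~\ref{sec:semi-group}.

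Next I would pin down a common invariant region so that the locality of the monotonicity in Definition~\ref{def:5} is harmless. Since $\textbf{u}_0\le \textbf{u}_+(0)$ and $\textbf{u}_+$ is bounded (being dominated, e.g., by the explicit super-solution $(MY_\alpha,NY_\alpha)$ of Lemma~\ref{Lemma:super}), all of $\textbf{u}_-$, $\textbf{u}$, $\textbf{u}_+$ stay in a fixed box $[0,M_0]\times[0,M_1]$ over the relevant time range, which lets me fix one $\lambda>0$ for which $\textbf{F}^\pm+\lambda(\cdot)$ is positive and nondecreasing in the sense of \cite[Assumption 4.1 and 4.4]{magal_monotone_2019}. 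I then apply Proposition~\ref{prof:comparsys} twice. For $i=-$, the pair (sub-solution $\textbf{u}_-$, super-solution $\textbf{u}$) with $\textbf{u}_-(0)\le \textbf{u}_0=\textbf{u}(0)$ gives $\textbf{u}_-(t)\le \textbf{u}(t;\textbf{u}_0)$ via \cite[Proposition 5.2]{magal_monotone_2019}, while $0\le \textbf{u}_-(t)$ follows from $\textbf{F}^-(0)=0$ and positivity of the semiflow; for $i=+$, the pair (sub-solution $\textbf{u}$, super-solution $\textbf{u}_+$) with $\textbf{u}(0)=\textbf{u}_0\le \textbf{u}_+(0)$ gives $\textbf{u}(t;\textbf{u}_0)\le \textbf{u}_+(t)$ via \cite[Proposition 5.4]{magal_monotone_2019}. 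Chaining the two inequalities yields $0\le \textbf{u}_-(t)\le \textbf{u}(t;\textbf{u}_0)\le \textbf{u}_+(t)$ for every admissible $t$.

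The main obstacle I anticipate is not the sandwiching logic, which is short, but the bookkeeping required to treat $\textbf{u}$ as a bona fide sub-/super-solution of a \emph{different} equation within the mild-solution framework: one must check that the Duhamel inequalities hold with a single resolvent-positivity constant $\lambda$ valid on the common box, and that the global existence and the $L^\infty$ bounds of Theorem~\ref{theo:2} keep all three curves inside the region where the monotonicity of Definition~\ref{def:5}, and hence the applicability of \cite[Theorem 4.3 and 4.5]{magal_monotone_2019}, remains in force.
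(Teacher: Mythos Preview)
Your proposal is correct and follows essentially the same approach as the paper: the paper states that this proposition ``is a direct consequence of Proposition~\ref{prof:comparsys} for a super-solution of the system associated with $\textbf{F}^-$ and a sub-solution of the system associated with $\textbf{F}^+$,'' citing \cite[Proposition 5.2 and 5.4]{magal_monotone_2019}, which is exactly your argument. Your write-up is in fact more detailed than the paper's one-line justification, particularly in spelling out the common invariant box and the uniform choice of $\lambda$.
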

The arguments in Theorem \ref{theo:2} state that \eqref{eq:main} and \eqref{eq:KPP2} are equivalent. Thus, the comparison principle also holds for \eqref{eq:main}, that is
for any $\textbf{u}_0=(u_{0},v_{0})\in \mathcal{B}_{\pi}(\mathcal{O}_A)\times \mathcal{B}_{\pi}(\mathcal{O}_A)$ and $0\leq \textbf{u}_{-}(0)\leq \textbf{u}_0$, one has
\begin{align*}
&0\leq u_{-}(t,x,a)\leq u(t,x,a;u_{0},v_{0}),\\ 
&0\leq v_{-}(t,x,a)\leq v(t,x,a;u_{0},v_{0}),
\end{align*}
for any $(x,a)\in\mathcal{O}_A$ and any possible $t>0$, where $\textbf{u}_{-}(t)=(u_{-}(t,x,a),v_{-}(t,x,a))$ is the sub-solution of \eqref{eq:KPP2} with $i=-$ and $(u(t,x,a;u_{0},v_{0})),v(t,x,a;u_{0},v_0))$ is the solution of the system \eqref{eq:main} with the initial condition $(u_0(x,a)),v_0(x,a))$. This also holds for $\textbf{u}_{+}(t)$.

This comparison principle is a direct consequence of Proposition \ref{prof:comparsys} for a super-solution of the system associated with $\textbf{F}^-$ and a sub-solution of the system associated with $\textbf{F}^+$. It is worth noting that one can choose $\textbf{F}^- = \textbf{F}^{l}$ and $\textbf{F}^+ = \textbf{F}^{r}$ as an example. In most cases, the functions $\textbf{F}^-$ and $\textbf{F}^+$ are chosen to be monotone, depending on the objective of each section, so that Proposition \ref{prof:comparsys} can be applied to construct a semi-flow and estimate the solution of system \eqref{eq:main}. 

%
%
%

\section{\bf Existence of disease-free and endemic equilibrium} \label{section:3}

In this section, we study the existence of endemic equilibrium of \eqref{eq:main}. That is, the existence of positive solutions $(u,v)$ to the system \eqref{eq:main1}.

\subsection{\bf Basic reproduction number $\mathcal{R}_0$}

We studies \textit{basic reproduction number} for \eqref{eq:main1} using the approach in \cite{chekroun_global_2020,yang_asymptotical_2023}. First, let us consider a disease-free equation of $(U,0)=(U(t,x,a),0)$ as follows
\begin{align*}
\begin{split}
\left\{\begin{array}{llll}
u_t + u_{a}=d_u\Delta_N u+m(x)u-n(x)u^2-\mu(x,a)u,&t>0,~(x,a)\in\mathcal{O}_A,\\
u(t,x,0) = \displaystyle\int_0^{A} \beta(x,a) u(t,x,a)da,&t>0,~x\in \O,\\
u(0,x,a)=u_0(x,a)&(x,a)\in\mathcal{O}_A.
\end{array}\right.
\end{split}
\end{align*}
Using arguments similar to those in Theorem \ref{theo:2}, there exists a unique $\overline{U}=\overline{U}(t,x,a;u_0)$ for each $u_0\in C(\overline{\mathcal{O}_A})$ $\supp(u_0)\subset \O\times [0,A)$. If $\lambda_{d_u,-m+\mu}>0$, it admits a unique positive steady state $\overline{U}_*=\overline{U}_*(x,a)$ satisfies
\begin{align*}
\begin{split}
\left\{\begin{array}{llll}
w_{a}=d_u\Delta_N w+m(x)u-n(x)w^2-\mu(x,a)u,&t>0,~(x,a)\in\mathcal{O}_A,\\
u(t,x,0) = \displaystyle\int_0^{A} \beta(x,a) u(t,x,a)da,&t>0,~x\in \O.
\end{array}\right.
\end{split}
\end{align*}
where $\lambda_{d_u,-m+\mu}$ is the principal eigenvalue of $\mathcal{A}_{\mu^*}$ for the case $d=d_u$, $\mu^*=-m+\mu$ (see Theorem A.\ref{Lemma A.7}). Then, we linearize the second equation of \eqref{eq:main} around the  disease-free equilibrium
$E^0=(\overline{U}_*,0)$  to obtain
\begin{align}\label{eq:linerize}
\begin{split}
\left\{\begin{array}{ll}
V_t + V_{a}=d_v\Delta_N V+
\left[p(x)-q(x)-\mu(x,a)\right]V,&t>0,\textbf{ }(x,a)\in\mathcal{O}_A,\\
V(t,x,0) = \displaystyle\int_0^{A} \beta(x,a) V(t,x,a)da,&t>0,\textbf{ }x\in\O,\\
V(0,x,a)=V_0(x,a),&(x,a)\in \mathcal{O}_A.
\end{array}\right.
\end{split}
\end{align}

First, for any $0\leq a_0<A $, it is known that the following equation
\begin{align}\label{eq:semi-group}
\left\{\begin{array}{lllll}
W_{a}=d_v\Delta_N W+
\left[p(x)-q(x)-\mu(x,a+a_0)\right]W,&(x,a)\in \mathcal{O}_A,\\
W(0,x)=W_0(x),&x\in \O.
\end{array}\right.
\end{align}
admits a $C_0$-linear semi-group $\Psi(a_0,a):\mathcal{Y}\rightarrow \mathcal{Y}$ such that $W(a;a_0)=\Psi(a_0,a)W_0$ is a solution of \eqref{eq:semi-group} in the sense that $\Psi(a_0,a)W_0\rightarrow W_0$ as $a\rightarrow 0$ for any $W_0\in C(\overline{\O})\setminus\{0\}$ and $W(a)>0$ for any $a\in (0,A-a_0)$ whenever $W_0\geq 0$ (see \cite[Section 5.1.2]{lunardi_analytic_1995}). Furthermore, it is known that
\begin{align*}
W_a\leq d_v\Delta_N W+
\left[K-\mu_{\min}(a+a_0)\right]W
\end{align*}
for some constant $K>0$ large enough. By usual parabolic comparison principle, one has
\begin{align*}
0\leq W(x,a)\leq e^{Ka - \displaystyle\int_0^a \mu_{\min}(k+a_0)dk }T_{d_v \Delta_N}(a)W_0(x)
\end{align*}
where $W_0\geq 0$ and $T_{d_v \Delta_N}$ is the semi-group generated from Neumann-Laplacian operator $-d_v \Delta_N$. Passing to the limit $a\rightarrow A-a_0$,  $W(x,a;a_0)\rightarrow 0$ when $a\rightarrow A-a_0$ uniformly with $x\in\overline{\O}$ thanks to \ref{cond:cond4}. Furthermore, one has
\begin{align*}
||\Psi(a_0,a)W_0||_{\infty}\leq Ce^{NA}||W_0||_{\infty}
\end{align*}
since $||T_{d_v \Delta_N}(a)W_0||_{\infty}\leq C||W_0||_{\infty}$ for some $C>0$ (see \cite[Theorem 5.1.11]{lunardi_analytic_1995}).

Now, using the characteristic line method, we solve the first equation in \eqref{eq:linerize} with the initial condition to obtain
\begin{align*}
V(t,\cdot,a) = \left\{\begin{array}{lllll}
\Psi(a-t,t)V_0(\cdot,a-t),&\text{ if }a-t\geq 0,\\
\Psi(0,a)V(t-a,\cdot,0)&\text{ if }t-a > 0,\\
\end{array}\right.
\end{align*}
Then, we substitute this to the age-structure condition to obtain
\begin{align*}
V(t,x,0)&=\int_0^{\min\{t,A\}} \beta(x,a) \Psi(0,a)V(t-a,\cdot,0)(x)da\\& + \int^A_{\min\{t,A\}} \beta(x,a) \Psi(a-t,t)V_0(\cdot,a-t)(x)da
\end{align*}
From here, following the approach in \cite[Section 4]{chekroun_global_2020} and \cite[Section 4]{yang_asymptotical_2023}, we define the next infected generation operator (NIGO) as follows
\begin{align}\label{for:NIGO}
\mathcal{J}(\varphi)(x):=\int_0^A \beta(x,a) \Psi(a)(\varphi)(x)da
\end{align}
and $\mathcal{R}_0:=r(\mathcal{I})$, where $\Psi(a)(\varphi)=\Psi(0,a)\varphi$, $r(\mathcal{I})$ is the spectral radius of $\mathcal{I}$ on $\mathcal{Y}$. The number $\mathcal{R}_0$ is called \textit{basic reproduction number}. 


\begin{theorem}
Assume \ref{cond:cond1} to \ref{cond:cond5} hold.  Let $\mathcal{J}$ be defined by \eqref{for:NIGO}. Then, the \textit{basic reproduction number} $\mathcal{R}_0 := r(\mathcal{J})$ is a simple positive eigenvalue of operator $\mathcal{I}$, associated with a positive eigenvector $v^0=v^0(x)\in \mathcal{Y}_{+}\setminus \{0\}$. Moreover, there is no other eigenvalue having positive eigenvector.
\end{theorem}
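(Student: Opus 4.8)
The plan is to recognize $\mathcal{J}$ as a compact, strongly positive linear operator on the Banach lattice $\mathcal{Y}=C(\overline{\O})$, whose positive cone $\mathcal{Y}_+$ has nonempty interior, and then to invoke the Krein--Rutman theorem in its strong (irreducible) form. That single theorem yields all the assertions at once: $r(\mathcal{J})>0$ is an eigenvalue, it is algebraically simple, its eigenvector may be taken strictly positive, and it is the only eigenvalue admitting a positive eigenvector.

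First I would check that $\mathcal{J}\colon\mathcal{Y}\to\mathcal{Y}$ is a bounded linear operator. Linearity is immediate from the linearity of $\varphi\mapsto\Psi(0,a)\varphi$, and boundedness follows from the estimate $\|\Psi(a_0,a)W_0\|_\infty\le Ce^{NA}\|W_0\|_\infty$ established above, combined with the continuity of $\beta$ and the finiteness of $(0,A)$. Since $\supp(\beta)\subset\overline{\O}\times(A_0,A_2]$ with $A_0>0$, the defining integral reduces to $\int_{A_0}^{A_2}$, so that only semigroup times $a\ge A_0>0$ actually enter $\mathcal{J}$.

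The first substantive step is compactness. For each fixed $a\ge A_0>0$ the operator $\Psi(0,a)$ inherits the smoothing property of the analytic Neumann--Laplacian semigroup $T_{d_v\Delta_N}(a)$ (a bounded, $a$-dependent multiplication perturbation does not destroy this): it maps $\mathcal{Y}$ into a space compactly embedded in $\mathcal{Y}$, e.g.\ $C^1(\overline{\O})$ or a fractional power domain of $-d_v\Delta_N$, and this regularization is uniform for $a$ in the compact interval $[A_0,A_2]$. Using the strong continuity of $a\mapsto\Psi(0,a)$ and the uniform bound above, $\mathcal{J}$ is a norm-limit of Riemann sums of uniformly compact operators, and is therefore itself compact. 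Making this collective compactness over the $a$-integral fully rigorous is the point requiring the most care, and I expect it to be the main obstacle.

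The second substantive step is strong positivity. Fix $\varphi\in\mathcal{Y}_+\setminus\{0\}$. The strong positivity of the Neumann heat semigroup together with the parabolic strong maximum principle give $[\Psi(0,a)\varphi](x)>0$ for every $x\in\overline{\O}$ and every $a\in(0,A)$. By \ref{cond:cond5} one has $\beta(x,a)\ge\beta_{\min}(a)>0$ for all $x$ and all $a$ in the subinterval $(A_0,A_1)$ of positive length, so the integrand $\beta(x,a)[\Psi(0,a)\varphi](x)$ is strictly positive on $\overline{\O}\times(A_0,A_1)$; integrating gives $\mathcal{J}(\varphi)(x)>0$ for all $x\in\overline{\O}$. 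Hence $\mathcal{J}$ maps $\mathcal{Y}_+\setminus\{0\}$ into $\mathrm{int}(\mathcal{Y}_+)$, so $\mathcal{J}$ is strongly positive (in particular irreducible), which also forces $r(\mathcal{J})>0$. With compactness and strong positivity in hand, the strong Krein--Rutman theorem applies verbatim and delivers every claim, identifying $\mathcal{R}_0=r(\mathcal{J})$ as the simple dominant eigenvalue with strictly positive eigenvector $v^0\in\mathcal{Y}_+\setminus\{0\}$, and as the unique eigenvalue carrying a positive eigenvector.
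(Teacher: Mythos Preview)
Your proposal is correct and follows essentially the same route as the paper: verify that $\mathcal{J}$ is bounded, compact (via Schauder-type smoothing of the Neumann heat semigroup together with \ref{cond:cond5}, which keeps the integration away from $a=0$), and strongly positive (maps $\mathcal{Y}_+\setminus\{0\}$ into $\mathrm{Int}(\mathcal{Y}_+)$), and then invoke the strong Krein--Rutman theorem. Your write-up is in fact more explicit than the paper's on the compactness step, but the strategy and the ingredients are identical.
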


\begin{proof}
One can check that 
\begin{enumerate}
\item $\mathcal{J}:\mathcal{Y}_{+}\rightarrow \mathcal{Y}_{+}$.

\item There exists $M>0$ such that $||\mathcal{J}(\varphi)||_{\infty}\leq M ||\varphi||_{\infty}$ for any $\phi \in \mathcal{Y}$.

\item \text{$\mathcal{J}$ is a compact mapping} thanks to Schauder estimates and \ref{cond:cond5}.

\item $\mathcal{J}(\mathcal{Y}_+\setminus \{0\})\subset \text{Int}(\mathcal{Y}_+)$.

\end{enumerate}
By applying Krein-Rutman theorem (see, for instance, \cite[Theorem 3.2]{amann_fixed_1976}), we obtain the desire results and $\mathcal{R}_0>0$.
\end{proof}

It is known that $\mathcal{J}^*:\mathcal{Y}^* \rightarrow \mathcal{Y}^*$ also satisfies these four conditions (see \cite[Section 2.6 and Theorem 6.4]{brezis_functional_2011}), where $\mathcal{Y}^*$ and $\mathcal{J}^*$ are the dual space of $\mathcal{Y}$ and the dual operator of $\mathcal{J}$, respectively. Then, we apply again the Krein-Rutman theorem $r(\mathcal{J}^*)$ is the principal eigenvalue of $\mathcal{J}^*$. By the standard of spectral theory, $r(\mathcal{J}^*)=r(\mathcal{J})=\mathcal{R}_0$. Let $w^0\in \mathcal{Y}^*$ be the eigenvector of $\mathcal{J}^*$ corresponding to $\mathcal{R}_0$ and $w^0(y)>0~(\text{resp}. \geq 0)$ for any $y\in \mathcal{Y}$ and $y>0~(\text{resp}. \geq 0)$ on $\overline{\O}$.

On the other hand, consider case $\mu^*=-p+q+\mu$. Then, there exists unique principal eigenvalue $\lambda_{d_v,-p+q+\mu}\in \R$ such that
\begin{align}\label{eq:eigenvalue}
\begin{split}
\left\{\begin{array}{llll}
d_v\Delta_N\phi-\phi_{a}+
\left[p(x)-q(x)-\mu(x,a)\right]\phi=\lambda_{d_v,-p+q+\mu}\phi,&(x,a)\in\mathcal{O}_A,\\
\phi(x,0) = \displaystyle\int_0^{A} \beta(x,a) \phi(x,a)da,&x\in \O.
\end{array}\right.
\end{split}
\end{align}
where $\phi=\phi(x,a)$  is the eigenfunction with associated the principal eigenvalue $\lambda_{d_v,-p+q+\mu}$. Note that $\lim\limits_{a\rightarrow A}\phi(x,a)=0$ uniformly with $x\in \overline{\O}$ thanks to the assumption \ref{cond:cond4} (see Appendix \ref{Appen A}). Thus, by defining $\phi(x,A)=0$, it is possible to obtain that $\phi\in C\left(\overline{\mathcal{O}_A}\right)$. Furthermore, it follows from the classical bootstrap arguments that $\phi\in C(\overline{\mathcal{O}_A})\cap C^{2,1}({\mathcal{O}_A})$ (see \cite[Theorem 2.6]{delgado_nonlinear_2008} and \cite[Proposition 1]{ducrot_travelling_2007} for techniques) and $\phi>0$ on $\overline{\O}\times [0,A)$. In addition, for any other eigenvalue $\lambda_0$ of $d_v\Delta_N-\partial_{a}+
\left[p(x)-q(x)-\mu(x,a)\right]I$, one has $Re(\lambda_0)<\lambda_{d_v,-p+q+\mu}$. 

It is known that
\begin{align*}
\int_0^A \beta(\cdot,a) \Psi(a)(v^0) da = \mathcal{R}_0 v^0;\quad \mathcal{J}^*w^0=\mathcal{R}_0 w^0.
\end{align*}
Alternatively, with $u^0(x)=\phi(x,0)$, then $\Psi(a)u^0=\phi(\cdot,a)$. Thus, from \eqref{eq:eigenvalue}, we define 
\begin{align*}
\mathcal{J}_{\lambda_{d_v,-p+q+\mu}}(u_0)(x):=\int_0^A \beta(\cdot,a)e^{-\lambda_{d_v,-p+q+\mu} a} \Psi(a)(u^0) da = u^0.
\end{align*}
Thanks to positivity of $\mathcal{J}$, $\lambda\mapsto \left<w^0,\mathcal{J}_{\lambda}(u^0)\right>$ is decreasing.

Let $\mathcal{R}_0>1$, if $\lambda_{d_v,-p+q+\mu}\leq 0$, then
\begin{align*}
\left<w^0,u^0\right>=\left<w^0,\mathcal{J}_{\lambda_{d_v,-p+q+\mu}}u^0\right>\geq \left<w^0,\mathcal{J}u^0\right>=\left<\mathcal{J}^*w^0,u^0\right>=\mathcal{R}_0\left<w^0,u^0\right>.
\end{align*}
Thus, $\mathcal{R}_0\leq 1$. This is contradiction. Hence, if $\mathcal{R}_0>1$, then $\lambda_{d_v,-p+q+\mu}> 0$.  Similarly, $\mathcal{R}_0\leq 1$ then $\lambda_{d_v,-p+q+\mu}\leq 0$. Let us state the theorem about the relationship between $\lambda_{d_v,-p+q+\mu}$ and $\mathcal{R}_0$.


\begin{theorem}
Assume \ref{cond:cond1} to \ref{cond:cond5} hold.  Let $\mathcal{I}$ be defined by \eqref{for:NIGO}, the \textit{basic reproduction number} $\mathcal{R}_0 = r(\mathcal{I})$ and $\lambda_{d_v,-p+q+\mu}$ is the principal eigenvalue of \eqref{eq:eigenvalue}. Then, $\mathcal{R}_0>1$ if and only if $\lambda_{d_v,-p+q+\mu}>0$. Furthermore, $\mathcal{R}_0=1$ if and only if $\lambda_{d_v,-p+q+\mu}=0$.
\end{theorem}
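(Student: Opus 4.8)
The plan is to avoid computing the spectral radius $r(\mathcal{J}_\lambda)$ for every $\lambda$ and instead to test the one-parameter family $\mathcal{J}_\lambda$ against the dual eigenvector $w^0$ of $\mathcal{J}^{\ast}$, using the two values of $\lambda$ at which the scalar $\langle w^0,\mathcal{J}_\lambda u^0\rangle$ is known in closed form. Write $\lambda^{\ast}:=\lambda_{d_v,-p+q+\mu}$ and recall from the discussion preceding the statement that $\mathcal{J}_{\lambda^{\ast}}u^0=u^0$, that $\mathcal{J}^{\ast} w^0=\mathcal{R}_0 w^0$, and that $u^0=\phi(\cdot,0)>0$ on $\overline{\O}$ while $w^0$ is the strictly positive Krein--Rutman eigenvector of $\mathcal{J}^{\ast}$.

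First I would establish that $\lambda\mapsto\langle w^0,\mathcal{J}_\lambda u^0\rangle$ is \emph{strictly} decreasing. For $\lambda_1<\lambda_2$ one has $e^{-\lambda_1 a}>e^{-\lambda_2 a}$ for every $a\in(0,A)$; since $\Psi(a)u^0>0$ on $\O$ for $a\in(0,A)$ by the strong positivity of the semigroup $\Psi$, and $\beta\ge 0$ with $\beta_{\min}>0$ on $(A_0,A_1)$ by \ref{cond:cond5}, the representation \eqref{for:NIGO} gives $\mathcal{J}_{\lambda_1}u^0>\mathcal{J}_{\lambda_2}u^0$ pointwise on a set of positive measure. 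Pairing with the strictly positive functional $w^0$ then yields $\langle w^0,\mathcal{J}_{\lambda_1}u^0\rangle>\langle w^0,\mathcal{J}_{\lambda_2}u^0\rangle$; in particular $\langle w^0,u^0\rangle>0$.

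Next I would evaluate the pairing at the two anchor points. At $\lambda=\lambda^{\ast}$ the identity $\mathcal{J}_{\lambda^{\ast}}u^0=u^0$ gives $\langle w^0,\mathcal{J}_{\lambda^{\ast}}u^0\rangle=\langle w^0,u^0\rangle$, while at $\lambda=0$, using $\mathcal{J}_0=\mathcal{J}$ and $\mathcal{J}^{\ast} w^0=\mathcal{R}_0 w^0$,
\[
\langle w^0,\mathcal{J}_0 u^0\rangle=\langle w^0,\mathcal{J}u^0\rangle=\langle \mathcal{J}^{\ast} w^0,u^0\rangle=\mathcal{R}_0\langle w^0,u^0\rangle.
\]
Comparing the two through the strict monotonicity then settles every case: if $\lambda^{\ast}>0$, the value at $0$ strictly exceeds the value at $\lambda^{\ast}$, so $\mathcal{R}_0\langle w^0,u^0\rangle>\langle w^0,u^0\rangle$ and hence $\mathcal{R}_0>1$; if $\lambda^{\ast}<0$ the inequality reverses and $\mathcal{R}_0<1$; and if $\lambda^{\ast}=0$ the two values coincide, forcing $\mathcal{R}_0=1$. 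These three implications have mutually exclusive and exhaustive hypotheses and conclusions, so each is in fact an equivalence, giving both $\mathcal{R}_0>1\iff\lambda^{\ast}>0$ and $\mathcal{R}_0=1\iff\lambda^{\ast}=0$, as claimed.

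The step I expect to be the crux is the strict monotonicity of $\lambda\mapsto\langle w^0,\mathcal{J}_\lambda u^0\rangle$: a merely non-strict inequality would only recover one direction of each equivalence. Making it strict requires knowing that $\Psi(a)u^0$ is genuinely positive on a subset of $(0,A)$ of positive measure meeting $\{\beta>0\}$, which rests on the strong positivity of the parabolic semigroup $\Psi$ together with the interior positivity of the Krein--Rutman eigenvectors $u^0$ and $w^0$; once these positivity facts are in place, the remaining manipulations are purely algebraic.
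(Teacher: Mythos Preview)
Your proposal is correct and follows essentially the same approach as the paper: both arguments introduce the one-parameter family $\mathcal{J}_\lambda$, use the identity $\mathcal{J}_{\lambda^\ast}u^0=u^0$ coming from the principal eigenpair, pair against the dual Krein--Rutman eigenvector $w^0$, and exploit the monotonicity of $\lambda\mapsto\langle w^0,\mathcal{J}_\lambda u^0\rangle$ together with $\langle w^0,\mathcal{J}u^0\rangle=\mathcal{R}_0\langle w^0,u^0\rangle$. Your treatment is in fact slightly more careful than the paper's, since you justify \emph{strict} monotonicity (needed to separate the cases $\mathcal{R}_0=1$ and $\mathcal{R}_0\ne 1$) and argue all three sign cases directly, whereas the paper proceeds by contradiction and leaves the strictness and the equality case implicit.
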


\begin{remark}
Although the condition $\lambda_{-m+\mu} > 0$ is required to derive equation \eqref{eq:linerize} from the original system \eqref{eq:main}, it is not necessary that the existence of $\mathcal{R}_0$  depends on the sign of the principal eigenvalue $\lambda_{-m+\mu}$.

\end{remark}



\subsection{\bf Existence}

In this section, we adapt the technique in \cite{zhao_spatiotemporal_2023} to obtain the existence of disease-free equilibrium and endemic equilibrium.

First, let us discuss about the disease-free equilibrium.


\begin{proof}[Proof of Theorem \ref{main:theo1} \ref{DFE}]
 The existence and uniqueness of \eqref{eq:nodisease} have been proven in Theorem A.\ref{Lemma A.7} with the help of $\lambda_{d_u,-m+\mu}> 0$. Thanks to the classical bootstrap argument, one has
\begin{align*}
u_1\in C(\overline{\mathcal{O}_A})\cap C^{2,1}(\mathcal{O}_A)
\end{align*}

Since $\mathcal{R}_0\leq 1$, it is known that $\lambda_{d_v,-p+q+\mu}\leq 0$. Next, assume by the contraction, there exists $(u,v)$ is a solution of \eqref{eq:main1} such that $u>0$ and $v>0$. Furthermore, we have
\begin{align*}
u,~v\in C(\overline{\mathcal{O}_A})\cap C^{2,1}(\mathcal{O}_A).
\end{align*}
Let $\phi>0$ be the eigenfunction associated with $\lambda_{d_v,-p+q+\mu}$, recall \eqref{eq:eigenvalue}
\begin{align*}
\begin{split}
\left\{\begin{array}{llll}
\phi_{a}-d_v\Delta_N\phi-
[p(x)-q(x)-\mu(x,a)]\phi=-\lambda_{-p+q+\mu}\phi,&(x,a)\in\mathcal{O}_A,\\
\phi(x,0) = \displaystyle\int_0^{A} \beta(x,a) \phi(x,a)da,&x\in \O.
\end{array}\right.
\end{split}
\end{align*}
Thus, one gets
\begin{align*}
\phi_{a}-d_v\Delta_N\phi-
[p(x)-q(x)-\mu(x,a)]\phi \geq 0
\end{align*}
Since $\dfrac{u\phi}{u+\phi}\leq \phi$, we get that
\begin{align*}
\phi_{a}-d_v\Delta_N\phi-
\left[p(x)\dfrac{u}{u+\phi}-q(x)-\mu(x,a)\right]\phi \geq 0
\end{align*}
On the other hand, we have
\begin{align*}
\begin{split}
\left\{\begin{array}{llll}
 v_{a}-d_v\Delta_Nv-
\left[\dfrac{p(x)u}{u+v}-q(x)-\mu(x,a)\right]v=0,&(x,a)\in\mathcal{O}_A,\\
v(x,0) = \displaystyle\int_0^{A} \beta(x,a) v(x,a)da,&x\in \O.
\end{array}\right.
\end{split}
\end{align*}
By applying comparison principle in Theorem A.\ref{Lemma:A.10}-Step 2, we obtain that
\begin{align*}
v\leq \phi
\end{align*}
Replace $\phi$ with $c_0\phi$ for some $c_0>0$, one still has
\begin{align*}
v\leq c_0\phi
\end{align*}
Letting $c_0 \rightarrow 0$, $v\leq 0$. Contradiction. Thus, there is no positive solution of \eqref{eq:main1}. Furthermore, in this case, it is not possible to obtain a non-negative solution $v\geq 0$ of \eqref{eq:main1} in a similar manner. Hence, the solution $(u_1,0)$ is the only  non-trivial solution of \eqref{eq:main1}. This completes the proof.

%
%


\end{proof}
This theorem shows that the disease may not spread in the case when $\mathcal{R}_0\leq 1$ or the infected population $v=0$. 

In the next theorem, we study the existence of positive endemic steady state in the case the basic reproduction number $\mathcal{R}_0>1$.


\begin{proof}[Proof of Theorem \ref{main:theo1} \ref{EE}]


Since $\mu^*\mapsto \lambda_{d,\mu^*}$ is decreasing (see Section \ref{sec 2.1}) and $p\geq 0$, one has $\lambda_{d_u,-m+\mu}>\lambda_{d_u,-m+p+\mu}>0 $.  Furthermore, since $\mathcal{R}_0>1$, one has $\lambda_{d_v,-p+q+\mu}>0$.

Now, inspired by the technique in \cite[Theorem 2.6]{zhao_spatiotemporal_2023}  the strategy is to use the homotopy invariance of Leray-Schauder topological degree to find the positive solution of \eqref{eq:main1}. First, let us define
\begin{align*}
f^l(x,u)=[m(x)-p(x)-n(x)u]u;\textbf{ }\quad f^r(x,u,v)=[m(x)-n(x)u]u+q(x)v,
\end{align*}
and
\begin{align*}
f^c(x,u,v)=\left[m(x)-n(x)u-\dfrac{p(x)v}{u+v}\right]u+q(x)v;~ g(x,u,v)=\left[\dfrac{p(x)u}{u+v}-q(x)\right]v.
\end{align*}
for the future purposes. Based on the arguments in \cite[Section 1.1]{allen_asymptotic_2008}, it is known that $f^l$, $f^c$, $f^r$ and $g$ are local Lipchitz functions.

Since $0\leq \dfrac{v}{u+v} \leq 1$, it is easy to see that
\begin{align*}
f^l(x,u)\leq f^c(x,u,v)\leq f^r(x,u,v)
\end{align*}
for any $x\in \O$ and $u\geq 0$, $v\geq 0 $ such that $u+v\neq 0$. Now, let us consider the following auxiliary system
\begin{align}\label{eq:L}
\begin{split}
\left\{\begin{array}{l}
 u_{a}=d_u\Delta_Nu+f^l(x,u)-\mu(x,a)u,\\
 v_{a}=d_v\Delta_Nv+g(x,u,v)-\mu(x,a)v,\\
u(x,0) = \displaystyle\int_0^{A} \beta(x,a) u(x,a)da,\\
v(x,0) = \displaystyle\int_0^{A} \beta(x,a) v(x,a)da.
\end{array}\right.
\end{split}
\end{align}
Thanks to Theorem A.\ref{Lemma A.7} ($k=m-p$) and the fact that $\lambda_{d_u,-m+p+\mu}>0$, the first and third equations admit a unique positive solution $u_l>0$ on $\overline{\O}\times [0,A)$ and $u_1(x,A)=0$ for any $x\in \overline{\O}$ (see Remark \ref{lim0} in Appendix). 

Next, let us consider the existence of solution $v_l$ to
\begin{align}\label{eq:temp}
\begin{split}
\left\{\begin{array}{l}
 v_{a}=d_v\Delta_Nv+vh(x,u_l,v)-\mu(x,a)v,\\
v(x,0) = \displaystyle\int_0^{A} \beta(x,a) v(x,a)da.
\end{array}\right.
\end{split}
\end{align}
where $h(x,u_l(x,a),v)=\dfrac{p(x)u_l(x,a)}{u_l(x,a)+v}-q(x)$. Then, since $\lambda_{d_v,-p+q+\mu}>0$, \eqref{eq:temp} admits a unique positive solution $v_l>0$ on $\overline{\O}\times [0,A)$ by thanks to Theorem A.\ref{Lemma:A.10} (with $\lambda_{d_v,-p+q+\mu}^*=-\lambda_{d_v,-p+q+\mu}<0$). Thus, the system  \eqref{eq:L} has a unique positive solution $(u_l,v_l)$. The uniqueness is obvious since all the solutions of \eqref{eq:temp} must satisfy the first and third one whose uniqueness are guaranteed. In addition, by standard parabolic regularity and bootstrap arguments, similar to the one in \cite[Proposition 1]{ducrot_travelling_2007}, one can check that
\begin{align*}
u^l,v^l\in C(\overline{\mathcal{O}_A})\cap C^{2,1}(\mathcal{O}_A).
\end{align*} 

Next, we construct sub- and super- solutions to form the domain for the topological degree. For $0<\epsilon<1$, we define $\underline{u}_l=\epsilon u_l$. Then, one has $0<\underline{u}_l<u_l$ on $\overline{\O}\times (0,A)$, $\underline{u}_l(x,A)=0$ for any $x\in \overline{\O}$ and
\begin{align*}
f^l(x,a,\underline{u}_l)-\mu(x,a)\epsilon u_l&=[m(x)-p(x)-n(x)\epsilon u_l-\mu(x,a)]\epsilon u_l\\
&\geq \epsilon[m(x)-p(x)-n(x) u_l-\mu(x,a)] u_l\\
&=(\underline{u}_l)_a-d_u\Delta_N\underline{u}_l.
\end{align*}
The age-structured condition is obvious. Thus, a similar process to that used for $v_l$ is applied for $u=\underline{u}_l$ to obtain that $\underline{v}_l$ is a positive solution of the following equation
\begin{align}\label{eq:temp1}
\begin{split}
\left\{\begin{array}{l}
 v_{a}=d_v\Delta_Nv+g(x,a,\underline{u}_l,v)-\mu(x,a) v,\\
v(x,0) = \displaystyle\int_0^{A} \beta(x,a) v(x,a)da.
\end{array}\right.
\end{split}
\end{align}
A simple calculation yields that $g(x,a,u_l,v_l)\geq g(x,a,\underline{u}_l,v_l)$. Therefore, thanks to Lemma A.\ref{comparison}, $v_l\geq \underline{v}_l$ on $\overline{\O}\times [0,A)$. Let $0<\epsilon<1$. We can choose $\epsilon \underline{v_l}$ instead of $\underline{v_l}$ so that $v_l\geq \underline{v_l}>\epsilon \underline{v_l}$ on $\overline{\O}\times [0,A)$ and $\underline{v}_l$ is still a sub-solution. Thus, we may assume $v_l>\underline{v}_l$ on $\overline{\O}\times [0,A)$. The pair $(\underline{u}_l,\underline{v}_l)$ will be used as the sub-solution.


Next, we construct the super-solution. We recall $\overline{u}_r:=MY_{\alpha}$, $\overline{v}_r := 2KMY_{\alpha}$ from Lemma \ref{Lemma:super}. It then follows that $(\overline{u}_r,\overline{v}_r)$ is super-solution of the system \eqref{eq:main1}. By Lemma A.\ref{comparison}, $u_l<\overline{u}_r$ and $v_l<\overline{v}_r$ on $\overline{\O}\times [0,A)$. 




Now, let us define the domain and the homotopy function for the topological degree as follows 
\begin{align*}
\mathcal{C}&:=\left\{(u,v)\in \mathcal{X}_{0}\times\mathcal{X}_{0}:
\begin{matrix}\underline{u}_l < u< \overline{u}_r,~\underline{v}_l < v< \overline{v}_r \text{ on }\overline{\mathcal{O}_{A}}
\end{matrix}
\right\}
\end{align*}
and $\mathcal{F}:[0,1]\times \mathcal{C}\rightarrow \mathcal{X}_{0}\times \mathcal{X}_{0}$
\begin{align*}
\mathcal{F}(t)(u,v)=\left(\begin{matrix}
(\mathcal{B}_{d_u,\mu}+\lambda I)^{-1}[\lambda u+(1-t)f^l(x,u)+tf^c(x,u,v)]\\
(\mathcal{B}_{d_v,\mu}+\lambda I)^{-1}[\lambda v+g(x,u,v)]
\end{matrix}\right).
\end{align*}
for some $\lambda>0$ large enough. Clearly, $\mathcal{C}$ is an open set of $\mathcal{X}_{0}\times\mathcal{X}_{0}$, $(u_l,v_l)\in \mathcal{C}$ and $\mathcal{F}$ is continuous. Using similar arguments to the one in \cite[Theorem 1]{guo_on_1994}, one can check that $\mathcal{F}(t)$ is compact for any $t\in [0,1]$. Because of the definitions of $(\underline{u}_l,\underline{v}_l)$ and $(\overline{u}_r,\overline{v}_r)$, it is known that $\mathcal{F}(t)(u,v)\neq (u,v)$ on $\partial \mathcal{C}$ for any $t\in [0,1]$. Thus, the topological degree $\deg(I-\mathcal{F}(t),\mathcal{C},0)$ is well-defined and is independent of $t$. 

Next, let us calculate the value of $\deg(I-\mathcal{F}(0),\mathcal{C},0)$. Define $\mathcal{T}_{\lambda,d,\mu}=(\mathcal{B}_{d,\mu}+\lambda I)^{-1}$ and $A:=\left[I-D\mathcal{F}(0)\right](u_l,v_l)$. Direct calculations yield that
\begin{align*}
A=\left(\begin{matrix}
\mathcal{T}_{\lambda,d_u,\mu}\left[\mathcal{B}_{d_u,\mu}+ c_{11}(x)I\right]&0\\
\mathcal{T}_{\lambda,d_v,\mu}\left(\mathcal{B}_{d_v,\mu}-p(x)\dfrac{v_l^2}{(u_l+v_l)^2} I\right)&\mathcal{T}_{\lambda,d_v,\mu}\left[\mathcal{B}_{d_v,\mu}+c_{22}(x)I\right].
\end{matrix}\right).
\end{align*}
where $D\mathcal{F}(0)$ is the derivative of $\mathcal{F}(0)$ and $c_{11}(u)(x)=2n(x)u_l-m(x)-p(x)$, $c_{22}(x)=q(x)-p(x)\dfrac{u^2_l}{(u_l+v_l)^2} $. Put  
$\mathcal{L}=[I-D\mathcal{F}(0)](u_l,v_l)$. Similar to the one in \cite[Theorem 1]{guo_on_1994}, $\mathcal{B}_{d_u}$ and $\mathcal{B}_{d_v}$ have compact resolvents. It follows from \cite[Theorem 6.6]{brezis_functional_2011} that $\mathcal{L}$ is a Fredholm operator of index zero. 

On one hand, $(u_l,v_l)$ is the unique solution of \eqref{eq:L}, making it an isolated solution of $I-\mathcal{F}(0)$. On the other hand, we can check that
\begin{align*}
\sigma(\mathcal{L})\setminus\{\infty\}=\sigma_p(\mathcal{L})=\sigma_p(\mathcal{B}_{d_u,\mu}+c_{11}(x)I)\cup\sigma_p(\mathcal{B}_{d_v,\mu}+c_{22}(x)I). 
\end{align*}
where $\sigma_p(\mathcal{L})$ is the point spectrum (see \cite[Chapter 7]{kreyszig_introductory_1978}). Let $\lambda_1$ be the principal eigenvalue of $\mathcal{B}_{d_u,\mu}+c_{11}(x)I$ and $\lambda_2$ be the principal eigenvalue of $\mathcal{B}_{d_v,\mu}+c_{22}(x)I$. It follows from 
\begin{align*}
\mathcal{B}_{d_u,\mu}v_l+\left[q(x)-\dfrac{p(x)u_l}{u_l+v_l} \right]v_l=0
\end{align*}
that $0$ is the principal eigenvalue of $\mathcal{B}_{d_u,\mu}+\left[q(x)-\dfrac{p(x)u_l}{u_l+v_l} \right]I$. Thus, by arguments similar to those in \cite[Theorem 8]{delgado_nonlinear_2006} and the fact that $c_{22}(x)\geq \left[q(x)-\dfrac{p(x)u_l}{u_l+v_l} \right]$, we have that $\lambda_2>0$. Similarly, $\lambda_1>0$. As the results,
\begin{align*}
\mathfrak{s}\left(\mathcal{L}):=\inf\{Re(\lambda):\lambda\in \sigma(\mathcal{L})\right\}>0
\end{align*}
meaning $\left[I-D\mathcal{F}(0)\right](u_l,v_l)$ is invertible. Consequently, by applying the  topological degree theory in  Nirenberg \cite[Theorem 2.8.1]{nirenberg_nonlinear_2001}, one gets either $\deg(I-\mathcal{F}(0),\mathcal{C},0)=1$ or $=-1$. Using homotopy invariance, one has either $\deg(I-\mathcal{F}(1),\mathcal{C},0)=1$ or $-1$, meaning there exists $(u,v)\in \mathcal{C}$ such that $[I-\mathcal{F}(1)](u,v)=0$. As the results, $(u,v)$ is the solution of the endemic equilibrium \eqref{eq:main1} and $u>0$ and $v>0$ on $\overline{\O}\times [0,A)$. By standard parabolic regularity and bootstrap arguments, similar to the one in \cite[Proposition 1]{ducrot_travelling_2007}, we can check that
\begin{align*}
u,~v\in C(\overline{\mathcal{O}_{A}})\cap C^{2,1}(\mathcal{O}_{A}).
\end{align*} 
and $u(x,A)=v(x,A)=0$ for any $x\in \overline{\O}$. This completes the proof.


\end{proof}

\section{\bf Stability of disease-free and endemic equilibrium}\label{section:4}



Since the monotone semi-flow property of the system \eqref{eq:main} is not obtained, let us proceed by recalling the upper-system as follows
\begin{align}\label{eq:uppermain}
\left\{\begin{array}{lll}
u_t = \mathcal{A}_{d_u,\mu}u + F^r(u,v),&t>0,~(x,a)\in \mathcal{O}_A,\\
v_t = \mathcal{A}_{d_v,\mu}v + G(u,v),&t>0,~(x,a)\in \mathcal{O}_A,\\
u(0,x,a)=u_0(x,a),&(x,a)\in \mathcal{O}_A,\\
v(0,x,a)=v_0(x,a),&(x,a)\in \mathcal{O}_A.
\end{array}\right.
\end{align}
Thanks to the proof in Theorem \ref{theo:2},  this system admits a unique positive  global-in-time solution 
\begin{align*}
(u^r,v^r)=(u^r(t;u_0,v_0),v^r(t;u_0,v_0))=(u^r(t,x,a;u_0,v_0),v^r(t,x,a;u_0,v_0)).
\end{align*}
Furthermore, by similar calculation with \eqref{boundedblow}, the age-structured comparison principle holds for these solution based on the work in \cite[Theorem 4.3 and Theorem 4.5]{magal_monotone_2019}. We also define its steady state as follows
\begin{align}\label{eq:uppermain1}
\left\{\begin{array}{lll}
\mathcal{A}_{d_u,\mu}u + F^r(u,v)=0,&~(x,a)\in \mathcal{O}_A,\\
 \mathcal{A}_{d_v,\mu}v + G(u,v)=0,&~(x,a)\in \mathcal{O}_A.
\end{array}\right.
\end{align}
By similar arguments with the one in Theorem \ref{main:theo1} \ref{DFE}, one can prove that $v=0$ and $u=u_1$.

Next, consider $(u_0, v_0) \in \mathcal{X}_+\times \mathcal{X}_+$ with  $\supp(u_0)\cap \mathcal{O}_{A_2}\neq \emptyset$ and  $\supp(v_0)\cap \mathcal{O}_{A_2}\neq \emptyset$, $\supp({u_0}),~\supp({v_0})\subset \overline{\O}\times [0,A)$. It is known that $u_0,~v_0$ satisfying \eqref{bounded:above}, $u_0,~v_0\in  \mathcal{B}_{\pi}(\mathcal{O}_A)$ and, thus, 
$(u,v)$ is bounded in the sense that
\begin{align*}
||u^r(t;u_0,v_0)||_{L^{\infty}(\mathcal{O}_{A})}+||v^r(t;u_0,v_0)||_{L^{\infty}(\mathcal{O}_{A})}\leq 2N,~\forall t>0.
\end{align*} 
where $N>0$ is the constant in Theorem \ref{theo:2}. Hence, 
\begin{align}\label{pro:bounded}
||u^r(t;u_0,v_0)||_{L^{2}(\mathcal{O}_{A})}+||v^r(t;u_0,v_0)||_{L^{2}(\mathcal{O}_{A})}\leq H,~\forall t>0,
\end{align}
for some $H>0$. Furthermore, there exists $T^*_0>A$ such that
\begin{align*}
0<u(t)\leq u^r(t),~0<v(t)\leq v^r(t),~\forall t\geq T^*_0 \text{ on } \overline{\O}\times [0,A).
\end{align*}

Next, we define the $\omega$-limit set depending on the initial condition $(u_0,v_0)$
\begin{align*}
\omega(u_0,v_0)=&\{(w,k)\in\mathcal{E}\times \mathcal{E}:~\exists t_n\rightarrow \infty \text{ as }n \rightarrow \infty \text{ such that }\\
&u^r(t_n;u_0,v_0)\rightarrow w,~v^r(t_n;u_0,v_0)\rightarrow k \text{ weakly in }\mathcal{E}\}
\end{align*}
It is well-known that the $\omega$-limit set $\omega_{A}(u_0,v_0)$ is non-empty thanks to \eqref{pro:bounded}.


\subsection{\bf Stability of the disease-free case $\mathcal{R}_0<1$}

In this section, we study the stability of solution in the disease-free case $\mathcal{R}_0<1$. We expect the infected population $v(t)$ will tends to zero in this case. By adapting the technique in \cite[Theorem 3.3]{zhao_spatiotemporal_2023}, the following lemma shows this statements clearly. 

\begin{lemma}\label{lemma:conver0}
 Suppose that $(u_0, v_0) \in \mathcal{X}_+\times \mathcal{X}_+$ with $\supp(u_0),~\supp(v_0)\subset \O\times [0,A)$,  $u_0,~v_0\not\equiv 0$, $\lambda_{d_u,-m+\mu}>0$ and  $\mathcal{R}_0<1$. Then, $||v^r(t;u_0,v_0)||_{L^{\infty}(\mathcal{O}_A)}\rightarrow 0$ as $t\rightarrow \infty$. 
\end{lemma}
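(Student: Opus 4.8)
The plan is to dominate $v^r$ from above by an exponentially decaying multiple of the principal eigenfunction of the linearized operator, and then let the negative sign of the eigenvalue do the work. First I would observe that, since $u^r,v^r\ge 0$, we have $\tfrac{p(x)u^r}{u^r+v^r}\le p(x)$ wherever $v^r>0$ (and $G(u^r,v^r)=0$ where $v^r=0$), so that
\begin{align*}
G(u^r,v^r)=\left[\frac{p(x)u^r}{u^r+v^r}-q(x)\right]v^r\le\left[p(x)-q(x)\right]v^r .
\end{align*}
Hence $v^r$ is a subsolution of the linear age-structured problem
\begin{align*}
V_t=\mathcal{A}_{d_v,\mu}V+\left[p(x)-q(x)\right]V,
\end{align*}
which is exactly the equation \eqref{eq:linerize} obtained by linearizing the $v$-equation at the disease-free state. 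Because $\mathcal{R}_0<1$, the theorem relating $\mathcal{R}_0$ to the principal eigenvalue gives $\lambda_{d_v,-p+q+\mu}<0$.

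Next I would construct a supersolution from the principal eigenpair of \eqref{eq:eigenvalue}. Let $\phi$ be the eigenfunction associated with $\lambda_{d_v,-p+q+\mu}$, with $\phi\in C(\overline{\mathcal{O}_A})$, $\phi>0$ on $\overline{\O}\times[0,A)$ and $\phi(x,A)=0$. Set
\begin{align*}
\overline{V}(t,x,a):=C\,e^{\lambda_{d_v,-p+q+\mu}\,t}\,\phi(x,a),
\end{align*}
which is an exact solution of the linear problem above and satisfies the same nonlocal age condition $\overline{V}(t,x,0)=\int_0^A\beta\,\overline{V}\,da$. It then remains to pick $C$ so that $v_0\le C\phi$ on $\overline{\mathcal{O}_A}$. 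Since $\supp(v_0)$ is compact and contained in $\O\times[0,A)$, its projection onto the age axis lies in $[0,A']$ for some $A'<A$, so $v_0\equiv 0$ for $a>A'$; on the compact set $\overline{\O}\times[0,A']$ one has $\phi\ge c_0>0$, and choosing $C\ge\|v_0\|_{L^\infty}/c_0$ yields $v_0\le C\phi$ everywhere.

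Finally, the reaction $[p(x)-q(x)]V$ is monotone in the sense of Definition \ref{def:4} (for $\lambda$ large, $[p-q+\lambda]$ is a nonnegative coefficient), so the monotone comparison principle for one equation (Proposition \ref{prof:comparoneeq}), applied to the subsolution $v^r$ and the supersolution $\overline{V}$ with $v_0\le\overline{V}(0,\cdot,\cdot)$, gives
\begin{align*}
0\le v^r(t,x,a;u_0,v_0)\le C\,e^{\lambda_{d_v,-p+q+\mu}\,t}\,\phi(x,a)\le C\|\phi\|_{L^\infty}\,e^{\lambda_{d_v,-p+q+\mu}\,t}.
\end{align*}
As $\lambda_{d_v,-p+q+\mu}<0$, the right-hand side tends to $0$, whence $\|v^r(t;u_0,v_0)\|_{L^\infty(\mathcal{O}_A)}\to 0$, in fact exponentially.

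The decay estimate itself is routine; the point requiring care is the legitimacy of the scalar comparison step. One must verify that $v^r$ is genuinely a subsolution in the age-structured sense — the pointwise bound $G(u^r,v^r)\le[p-q]v^r$ together with $v^r$ and $\overline{V}$ sharing the same nonlocal boundary condition $V(t,x,0)=\int_0^A\beta V\,da$ — and that the linear reaction meets the monotonicity hypothesis of Proposition \ref{prof:comparoneeq}. The mildly delicate ingredient is the initial domination $v_0\le C\phi$ near $a=A$, where $\phi$ degenerates; this is exactly what the compact support of $v_0$ away from $\{a=A\}$ resolves. I note that only $\mathcal{R}_0<1$ is used here, not the hypothesis $\lambda_{d_u,-m+\mu}>0$, which is needed elsewhere for the convergence of the $u$-component.
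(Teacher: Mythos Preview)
Your proof is correct and follows essentially the same approach as the paper: bound $G(u^r,v^r)\le[p(x)-q(x)]v^r$, build the exact linear supersolution $C e^{\lambda_{d_v,-p+q+\mu}t}\phi(x,a)$ from the principal eigenpair, dominate the initial datum, and apply the monotone comparison principle of Proposition~\ref{prof:comparoneeq}. The paper simply asserts the existence of $C_1$ with $v_0\le C_1\phi$, whereas you supply the compact-support justification for this step and correctly observe that the hypothesis $\lambda_{d_u,-m+\mu}>0$ is not used here.
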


\begin{proof}
It follows from $\mathcal{R}_0<1$ that $\lambda_{d_v,-p+q+\mu}<0$. Also, it is known that $u_0,~v_0\in B_{\pi}(\mathcal{O}_A)$. Recall the solution $(u,v)$ of \eqref{eq:uppermain}  and $u(t),~v(t)>0$ for $t>0$ on $\mathcal{O}_A$.

Let $\phi$ be the positive eigenfunction associated with the principal eigenvalue $\lambda_{d_v,-p+q+\mu}$. Define 
\begin{align*}
\psi(t,x,a) =C_1e^{\lambda_{d_v,-p+q+\mu} t}\phi(x,a)>0,~\forall (x,a)\in \mathcal{O}_A,
\end{align*}
for some $C_1>0$ will be chosen later. Thus, direct calculations yields that
\begin{align*}
\psi_t+\psi_a= d_v\Delta_N\psi+
[p(x)-q(x)-\mu(x,a)]\psi
\end{align*}
Thanks to the fact that $\dfrac{uv}{u+v}\leq v$ for any $u,~v\geq 0$, one has
\begin{align*}
v_t^r+v_a^r\leq  d_v\Delta_N v^r+
\left[p(x)-q(x)-\mu(x,a)\right]v^r\ \text{ on }\mathcal{O}_A
\end{align*}
Furthermore, there exists $C_1>0$ large enough so that 
\begin{align*}
\psi(0,x,a)=C_1\phi(x,a)\geq v_0(x,a)
\end{align*}
for any $(x,a)\in \mathcal{O}_A$. By the comparison principle in Proposition \ref{prof:comparoneeq} for $\psi$ and $v$, one has
\begin{align*}
v^r(t,x,a)\leq \psi(t,x,a)=C_1e^{\lambda_{d_v,-p+q+\mu} t}\phi(x,a)
\end{align*}
Since $\lambda_{d_v,-p+q+\mu}<0$, we get
\begin{align*}
||v^r(t;u_0,v_0)||_{L^{\infty}(\mathcal{O}_A)}\rightarrow 0.
\end{align*}
This concludes the proof for $v$.

\end{proof}
\begin{remark}
The proof still holds if we assume there exists $K>0$ such that
\begin{align*}
v_0(x,a)\leq K\phi(x,a),~\forall (x,a)\in \mathcal{O}_A
\end{align*}
where $\phi$ is the eigenfunction associated with the eigenvalue $\lambda_{d_v,-p+q+\mu}$. 
The $\omega$-limit set can be written as follows
\begin{align*}
\omega(u_0,v_0)=\{(w,0)\}
\end{align*}

\end{remark}

%


\begin{lemma}\label{omegasetlimit}
 Suppose that all assumptions of Lemma \ref{lemma:conver0} are satisfied. Assume further that all convergences in the $\omega$-limit set $\omega(u_0,v_0)$ are strong in $L^2(\mathcal{O}_{A})$ (up to a sub-sequence). Then, any $(w,0)$ in $\omega(u_0,v_0)$ is an solution of \eqref{eq:uppermain1} or a disease-free equilibrium.
\end{lemma}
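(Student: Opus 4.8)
The plan is to run the classical $\omega$-limit set argument for the super-solution semi-flow generated by \eqref{eq:uppermain}, exploiting that its $v$-component has already been shown to vanish in the limit in Lemma \ref{lemma:conver0}. Fix $(w,0)\in\omega(u_0,v_0)$ together with a sequence $t_n\to\infty$ along which $u^r(t_n)\to w$ and $v^r(t_n)\to 0$ strongly in $L^2(\mathcal{O}_A)$ — the strong convergence being exactly the standing hypothesis. First I would introduce the time shifts $u_n(\tau):=u^r(\tau+t_n)$ and $v_n(\tau):=v^r(\tau+t_n)$, which by time-translation invariance solve \eqref{eq:uppermain} with data $(u^r(t_n),v^r(t_n))$, and pass to the limit in the mild formulation \eqref{eq:mildc}. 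Since $\Phi_{d_u,d_v,\mu}$ is a $C_0$-semi-group, the initial data converge strongly in $L^2$, and the nonlinearity $\textbf{F}^r$ is locally Lipschitz on the trajectories (uniformly bounded by $N$ via Theorem \ref{theo:2}), dominated convergence produces a limit $(U,V)$ solving \eqref{eq:uppermain} with $(U(0),V(0))=(w,0)$.

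Next, because $\|v^r(t)\|_{L^{\infty}(\mathcal{O}_A)}\to 0$ by Lemma \ref{lemma:conver0}, for each fixed $\tau\geq 0$ one has $v_n(\tau)=v^r(\tau+t_n)\to 0$, so $V\equiv 0$ and $U$ solves the scalar disease-free equation
\begin{align*}
\partial_\tau U=\mathcal{A}_{d_u,\mu}U+F^r(U,0),\qquad U(0)=w .
\end{align*}
Moreover, since $\tau+t_n\to\infty$, each $(U(\tau),0)=\lim_n(u^r(\tau+t_n),v^r(\tau+t_n))$ again lies in $\omega(u_0,v_0)$; hence the whole forward orbit $\{(U(\tau),0):\tau\geq 0\}$ sits inside the invariant set $\omega(u_0,v_0)$, which is relatively compact by the strong-convergence hypothesis, and by invariance it extends to a bounded entire orbit of the limiting disease-free semi-flow.

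The crux is to upgrade this to \emph{time-independence}, namely to show $U(\tau)\equiv w$, so that $(w,0)$ is a genuine steady state. Here the monotone structure of the disease-free equation enters: the comparison principle of Proposition \ref{prof:comparoneeq} shows the scalar limiting semi-flow is order-preserving, hence gradient-like, so a bounded entire orbit contained in a compact invariant (internally chain-transitive) subset of $\omega(u_0,v_0)$ cannot be non-constant, and following the spirit of Langlais \cite{langlais_large_1988} one concludes that $U$ is stationary. I expect this step — ruling out non-constant entire orbits inside the $\omega$-limit set — to be the main obstacle, precisely because \eqref{eq:uppermain} is only asymptotically autonomous and because the age-transport term $U_a$ together with the renewal condition obstructs a naive $L^2$-energy functional (the $\tfrac12\tfrac{d}{d\tau}\|U\|^2$ identity is not sign-definite). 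The strong $L^2$-convergence hypothesis is exactly what makes the limiting semi-flow continuous on $\omega(u_0,v_0)$ and thereby renders this dynamical-systems argument available. Once $\partial_\tau U\equiv 0$ is established, testing the limiting equation against time-independent $\psi$ gives $\mathcal{A}_{d_u,\mu}w+F^r(w,0)=0$, while $G(w,0)=0$ by our convention, so $(w,0)$ solves \eqref{eq:uppermain1}, i.e.\ it is a disease-free equilibrium.
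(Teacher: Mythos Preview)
Your approach is genuinely different from the paper's, and the step you yourself flag as the main obstacle is indeed a real gap.

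The paper does not build a limiting semi-flow at all. Following Langlais directly, it multiplies the $u$-equation of \eqref{eq:uppermain} by a test function $\rho(t-t_n)\chi(x,a)$, with $\rho$ a time mollifier supported in $[-1,1]$ and $\chi\in C^2$ compactly supported in $\Omega\times[0,A)$, integrates, and shifts variables to $s=t-t_n$. Passing to the limit term by term, the time-derivative contribution disappears automatically because $\int\rho'=0$, so no time-independence ever needs to be \emph{proved}. The only delicate term is the quadratic one $\int n(x)(u^r)^2\chi\rho$, and this is precisely where the strong $L^2$ hypothesis enters: it forces the weak limit of $(u^r)^2(t_n)$ to equal $w^2$. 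One then reads off \eqref{eq:very-weak1}, the very weak form of \eqref{eq:nodisease}, and upgrades by standard parabolic regularity.

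Your route instead constructs $U(\tau)$ through $w$ and then asserts ``order-preserving $\Rightarrow$ gradient-like $\Rightarrow$ bounded entire orbits are equilibria''. Neither implication holds in general: monotone semi-flows need not be gradient-like, and gradient-like systems admit non-constant entire orbits (heteroclinic connections between equilibria). To close this you would need either a genuine Lyapunov functional --- which, as you note, the transport term $U_a$ and the nonlocal renewal condition obstruct --- or a Hirsch-type strong-monotonicity argument adapted to the age-structured setting, and neither is available off the shelf here. Your citation of Langlais at this step is also misplaced: Langlais's contribution is exactly the mollifier/test-function technique the paper uses, not a dynamical-systems argument. The paper's approach is both shorter and avoids the obstacle entirely.
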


\begin{proof}
Inspired by \cite[Lemma 5.7]{langlais_large_1988}, we consider $\chi \in C^2(\overline{\mathcal{O}_{A}})$, $\supp(\chi)\subset \O \times [0,A)$ and $\rho \in C^{\infty}(\R)$ such that
\begin{align*}
\rho(s)\geq 0;\quad \supp(\rho)\subset [-1,1];\quad \int_{-1}^{1}\rho(s)ds=1
\end{align*}

Let $w\in \omega(u_0,v_0)$ and $t_n\geq 2$ be the sequence such that
\begin{align*}
u^r(t_n,\cdot,\cdot) \rightarrow w \text{ weakly in }L^2(\mathcal{O}_{A}).
\end{align*}
Multiplying the first equation of \eqref{eq:uppermain} by $\rho(t-t_n)\chi(x,a)$ and integrating over $[0,\infty)\times \mathcal{O}_{A}$, one has
\begin{align*}
&\int_{(t_n-1,t_n+1)\times \mathcal{O}_{A}}-\left[\chi\rho'(t-t_n)u^r-(\chi_a+d_u\Delta \chi -\mu(x,a)\chi)u^r\rho(t-t_n)\right]dtdxda\\
&=	\int_{(t_n-1,t_n+1)\times \O}\chi(x,0)\int_0^A\beta(x,a)u^r(t,x,a)\rho(t-t_n)dadtdx\\
&+\int_{(t_n-1,t_n+1)\times \mathcal{O}_{A }} F^r(u^r,v^r)\chi \rho(t-t_n)dtdxda
\end{align*}
Put $s=t-t_n$, by changing variable, we have that
\begin{align}\label{eq:R0<1}
\begin{split}
&\int_{(-1,1)\times \mathcal{O}_{A }}\left[-\chi\rho'(s)-(\chi_a+d_u\Delta \chi -\mu(x,a)\chi)\rho(s)\right]u^r(t_n+s,x,a)dsdxda\\
&=	\int_{(-1,1)\times \O}\chi(x,0)\int_0^A\beta(x,a)u^r(t_n+s,x,a)\rho(s)dadsdx\\
&+\int_{(-1,1)\times \mathcal{O}_{A }} F^r(u^r(t_n+s),v^r(t_n+s))\chi \rho(s)dsdxda.
\end{split}
\end{align}

\textbf{Claim}: For any $f\in L^{\infty}(\mathcal{O}_A)$, one has, as $n\rightarrow \infty$,
\begin{align*}
&\displaystyle \int_{(-1,1)\times\mathcal{O}_{A }}u^r(t_n+s,x,a)f(x,a)\rho(s)dsdxda\\&\rightarrow \int_{(-1,1)\times\mathcal{O}_{A }}w(x,a)f(x,a)\rho(s)dsdxda 
\end{align*}
\begin{proof}[Proof for the claim]
 Since $u(t_n,\cdot,\cdot) \rightarrow w \text{ weakly in }L^2(\mathcal{O}_{A })$, one gets 
\begin{align*}
\displaystyle \int_{\mathcal{O}_{A }}u^r(t_n+s,x,a)f(x,a)dxda\rightarrow \displaystyle \int_{\mathcal{O}_{A }}w(x,a)f(x,a)dxda
\end{align*}
and $||u^r(t_n+s,\cdot,\cdot)||_{L^{\infty}(\mathcal{O}_{A })}\leq N$, where $N>0$ is the constant in Theorem \ref{theo:2}. Thus, the mapping $s\mapsto \displaystyle \int_{\mathcal{O}_{A }}u^r(t_n+s,x,a)f(x,a)dxda$ is bounded for any $n\in \mathbb{N}$, $s\in [-1,1]$. By the Lebesgue dominated theorem, we obtain the desired result.
\end{proof}

Now, let us focus on 
\begin{align*}
&\int_{(-1,1)\times \mathcal{O}_{A }} F^r(u^r(t_n+s),v^r(t_n+s))\chi \rho(s)dsdxda
\\&=\int_{(-1,1)\times \mathcal{O}_{A }}\left[m(x)-n(x)u^r\right]u^r\chi \rho(s)dsdxda+\int_{(-1,1)\times \mathcal{O}_{A }}q(x)v^r\chi \rho(s)dsdxda\\
&=I+II+III
\end{align*}

First, it is easy to check that $|III|\leq  C_2e^{\lambda_{d_v,-p+q+\mu} t}$
for some $C_2>0$. Thus, one has $III\rightarrow 0$ as $n\rightarrow \infty$.
On the other hand, one has, as $n\rightarrow \infty$,
\begin{align*}
I &= \int_{(-1,1)\times \mathcal{O}_{A }}m(x)u^r(t_n+s,x,a)\chi \rho(s)dsdxda\\
&\rightarrow \int_{(-1,1)\times \mathcal{O}_{A }}m(x)w(x,a)\chi \rho(s)dsdxda
\end{align*}
since $m\rho \in L^{\infty}(\mathcal{O}_{A })$. Lastly,
\begin{align*}
II = \int_{(-1,1)\times \mathcal{O}_{A }}n(x)(u^r)^2(t_n+s,x,a)\chi \rho(s)dsdxda
\end{align*}
One can check that $t\mapsto ||(u^r)^2(t,\cdot,\cdot)||_{L^2(\mathcal{O}_{A })}$ is bounded. Thus, we can assume that (up to a sub-sequence)
\begin{align*}
(u^r)^2(t_n,\cdot,,\cdot)\rightarrow h \text{ weakly in }L^2(\mathcal{O}_{A }),
\end{align*}
for some $h\in L^2(\mathcal{O}_A)$. As the results, by similar arguments to the above claim, we get that
\begin{align*}
II \rightarrow \int_{(-1,1)\times \mathcal{O}_{A }}n(x)h(x,a)\chi \rho(s)dsdxda \text{ as } n\rightarrow \infty. 
\end{align*}
Passing to the limit $n\rightarrow \infty$, from \eqref{eq:R0<1}, one has
\begin{align*}
\begin{split}
&\int_{(-1,1)\times \mathcal{O}_{A}}\left[-\chi\rho'(s)-(\chi_a+d_u\Delta \chi -\mu(x,a)\chi)\rho(s)\right]w(x,a)dsdxda\\
&=	\int_{(-1,1)\times \O}\chi(x,0)\int_0^A\beta(x,a)w(x,a)\rho(s)dadsdx\\
&+\int_{(-1,1)\times \mathcal{O}_{A }}m(x)w(x,a)\chi \rho(s)dsdxda\\
&-\int_{(-1,1)\times \mathcal{O}_{A }}n(x)h(x,a)\chi \rho(s)dsdxda
\end{split}
\end{align*}
Thanks to the facts that $\displaystyle \int_{-1}^1\rho'(s)=0$ 
and $\displaystyle \int_{-1}^1\rho(s)=1$, we obtain
\begin{align}\label{eq:very-weak}
\begin{split}
&-\int_{\mathcal{O}_{A }}[\chi_a+d_u\Delta \chi -\mu(x,a)\chi)]w(x,a)dxda\\
&=	\int_{\O}\chi(x,0)\int_0^A\beta(x,a)w(x,a)dadx+\int_{ \mathcal{O}_{A }}m(x)w(x,a)\chi dxda\\
&-\int_{\mathcal{O}_{A }}n(x)h(x,a)\chi dxda
\end{split}
\end{align}
%


Next, since $u(t_n,\cdot,\cdot)$ converges strongly in $L^2(\mathcal{O}_{A })$ (up to a sub-sequence), it follows that
\begin{align*}
h=w^2
\end{align*}
Thus, the equation \eqref{eq:very-weak} can be written as follows
\begin{align}\label{eq:very-weak1}
\begin{split}
&-\int_{\mathcal{O}_{A }}[\chi_a+d_u\Delta \chi -\mu(x,a)\chi)]w(x,a)dxda\\
&=	\int_{\O}\chi(x,0)\int_0^A\beta(x,a)w(x,a)dadx+\int_{ \mathcal{O}_{A }}m(x)w(x,a)\chi dxda
\\
&-\int_{\mathcal{O}_{A }}n(x)w^2(x,a)\chi dxda.
\end{split}
\end{align}
Since $\chi$ is arbitrary, $w\geq 0$ is a very weak solution of \eqref{eq:nodisease}. By standard parabolic arguments and the age-structured initial condition, we can obtain $w$ is indeed a strong solution of \eqref{eq:nodisease}. Hence,
\begin{align*}
\omega_{A }(u_0,v_0)=\{(u_1,0)\}
\end{align*}

In addition, it is standard to obtain that
\begin{align*}
u^r(t,\cdot,\cdot)\rightarrow u_1 \text{ strongly in } L^2(\mathcal{O}_{A }) \text{ as }t\rightarrow \infty. 
\end{align*}
This completes the proof.

\end{proof}

\begin{remark}
The $L^2$-strong convergence condition is essential. To obtain the KPP-type term in the final result and thereby achieve the disease-free equilibrium, we require the $L^2$-weak convergence of the $"u^2"$ term. However, it is well-known that this weak convergence implies $L^2$-strong convergence of $u$.
\end{remark}


Next, inspired by the work of \cite[Section 7]{coville_simple_2010}, \cite[Theorem 4.11]{ducrot_age-structured_2024} and \cite[Proposition 5.3]{kang_effects_2022}, we obtain the following result

\begin{theorem}\label{Theo:10}
Assume \ref{cond:cond1} to \ref{cond:cond5} hold. Suppose further that $(u_0, v_0) \in \mathcal{X}_+\times \mathcal{X}_+$ with  $u_0,~v_0\not\equiv 0$, $\supp(u_0),~\supp(v_0)\subset \O\times [0,A)$, $\lambda_{d_u,-m+\mu}>0$ and  $\mathcal{R}_0<1$. Then, for any $A_2<A_3<A$, one has 
\begin{align*}
||u^r(t;u_0,v_0)-u_1||_{L^{\infty}(\overline{\mathcal{O}_{A_3}})}\rightarrow 0,~||v^r(t;u_0,v_0)||_{L^{\infty}(\overline{\mathcal{O}_{A_3}})}\rightarrow 0.
\end{align*}
\end{theorem}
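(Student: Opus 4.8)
The plan is to treat the two statements separately. The bound on $v^r$ is essentially free: Lemma~\ref{lemma:conver0} already gives $\|v^r(t;u_0,v_0)\|_{L^\infty(\mathcal{O}_A)}\to 0$, and since $\overline{\mathcal{O}_{A_3}}\subset\overline{\mathcal{O}_A}$ the $L^\infty(\mathcal{O}_A)$-norm dominates the $L^\infty(\overline{\mathcal{O}_{A_3}})$-norm, so $\|v^r(t)\|_{L^\infty(\overline{\mathcal{O}_{A_3}})}\to 0$ with no extra work. All the substance lies in upgrading the convergence $u^r\to u_1$ from the $L^2(\mathcal{O}_A)$-statement, which is the output of Lemma~\ref{omegasetlimit}, to uniform convergence on the truncated cylinder $\overline{\mathcal{O}_{A_3}}$ with $A_3<A$.

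For the $u^r$-statement I would argue by a compactness-plus-uniqueness scheme. Fix an arbitrary sequence $t_n\to\infty$; it suffices to extract a subsequence along which $u^r(t_n,\cdot,\cdot)\to u_1$ uniformly on $\overline{\mathcal{O}_{A_3}}$, for then, the limit $u_1$ being independent of the sequence, the whole family converges. The engine is a uniform-in-$t$ regularity estimate: I would show that $\{u^r(t,\cdot,\cdot):t\ge T\}$ is equicontinuous on $\overline{\mathcal{O}_{A_3}}$, hence precompact in $C(\overline{\mathcal{O}_{A_3}})$ by Arzel\`a--Ascoli. Granting this, precompactness in $C$ forces exactly the strong $L^2$ convergence hypothesised in Lemma~\ref{omegasetlimit}, so its conclusion $u^r\to u_1$ in $L^2(\mathcal{O}_A)$ applies and pins down every $C$-limit point as $u_1$; the subsequence argument then closes the proof.

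The regularity estimate is where I would spend the effort. For $t>A_3$ and $a\le A_3$ the Duhamel form of the characteristic representation \eqref{charaterics-line} writes $u^r(t,x,a)$ as $\mathcal{H}(0,0,a)$ applied to the birth datum $u^r(\cdot,\cdot,0)=\int_0^A\beta\,u^r\,da$ plus the integral contribution of $F^r$ along characteristics. The birth datum and the nonlinearity are controlled by the uniform bound $0\le u^r\le N$ of Theorem~\ref{theo:2}. Because $A_3<A$, the mortality $\mu$ and its derivatives remain bounded on $\overline{\O}\times[0,A_3]$ by \ref{cond:cond3}--\ref{cond:cond4}, so the operators $\mathcal{H}(0,0,a)$, $a\le A_3$, inherit the analytic smoothing of the Neumann heat semigroup $T_{d_u\Delta_N}$ through \eqref{estimate1}; this yields uniform spatial H\"older bounds, while smoothing along characteristics supplies the corresponding control in the age variable. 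The decay of $v^r$ from Lemma~\ref{lemma:conver0} ensures that the coupling $q(x)v^r$ entering $F^r$ is asymptotically negligible, so $u^r$ behaves like a solution of the scalar disease-free equation \eqref{eq:nodisease}. Assembling these pieces gives the sought equicontinuity on $\overline{\mathcal{O}_{A_3}}$.

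The \textbf{main obstacle} is precisely this uniform regularity: one must extract parabolic smoothing from the age-structured operator despite the transport term $u_a$, which couples the time and age directions, and despite the nonlocal birth boundary condition at $a=0$. The truncation at $A_3<A$ is essential, since it keeps the problem uniformly parabolic with bounded coefficients and sidesteps the $L^1$-blow-up degeneracy of $\mu$ at $a=A$; I would also verify that the slice $a\in(A_3,A)$ contributes nothing to the birth integral, which holds because $\supp(\beta)\subset\overline{\O}\times(A_0,A_2]$ with $A_2<A_3$, so the truncated dynamics is self-contained.
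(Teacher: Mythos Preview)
Your strategy is genuinely different from the paper's. You propose a compactness-plus-uniqueness argument: establish uniform-in-$t$ equicontinuity of $u^r(t,\cdot,\cdot)$ on $\overline{\mathcal{O}_{A_3}}$ via parabolic smoothing, extract $C$-convergent subsequences by Arzel\`a--Ascoli, and identify every limit point as $u_1$ through Lemma~\ref{omegasetlimit}. The paper instead exploits that the upper system \eqref{eq:uppermain} is \emph{monotone} (cooperative): $\partial_v F^r=q>0$ and $\partial_u G=\dfrac{pv^2}{(u+v)^2}\ge 0$, so Proposition~\ref{prof:comparsys} applies. Sandwiching $(u^r(T^*_0;u_0,v_0),v^r(T^*_0;u_0,v_0))$ between a sub-solution $(\epsilon\phi,0)$ and a super-solution $(K_1Y_\alpha,K_2\varphi)$, the trajectories from these ordered data are monotone in $t$, hence converge pointwise; dominated convergence then gives the strong $L^2$ convergence needed in Lemma~\ref{omegasetlimit}, and Dini's theorem upgrades monotone pointwise convergence of continuous functions to uniform convergence on the compact $\overline{\mathcal{O}_{A_3}}$. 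This route needs no regularity theory at all.

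Your approach is viable in principle, but the equicontinuity sketch has a real gap that you flag but do not close. The $x$-regularity bootstrap you outline is sound: since $\supp\beta\subset\overline{\O}\times(A_0,A_2]$ with $A_0>0$, the birth datum $u^r(\tau,\cdot,0)=\int_{A_0}^{A_2}\beta(\cdot,b)u^r(\tau,\cdot,b)\,db$ inherits uniform $C^\alpha_x$ bounds from parabolic smoothing at ages $b\ge A_0$. The difficulty is the $a$-modulus of continuity near $a=0$. From the characteristic formula, $u^r(t,x,a)-u^r(t,x,0)$ splits into $[\mathcal{H}(0,0,a)-I]u^r(t-a,\cdot,0)(x)$, the Duhamel tail $O(a)$, and the term $u^r(t-a,x,0)-u^r(t,x,0)$. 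The first two are harmless, but the third is precisely the \emph{time}-regularity of the birth datum, which unwinds back to time-regularity of $u^r(\cdot,\cdot,b)$ for $b\in[A_0,A_2]$, which in turn depends on time-regularity of the birth datum at earlier times. You need either a contraction argument on the modulus of continuity or an appeal to eventual compactness of the age-structured semigroup $\Phi_{d_u,\mu}$ (after time $t>A$ the solution is determined by the smoothed birth history); neither is supplied. The paper's monotone semi-flow argument sidesteps this circularity entirely and is the cleaner path here.
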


\begin{proof}
Let $A_2<A_3<A$. Since $u_0\geq 0$ and $v_0\geq 0$, one has
\begin{align*}
u^r(T^*_0,x,a;u_0,v_0)>0,~v^r(T^*_0,x,a;u_0,v_0)>0,~\forall (x,a)\in \overline{\mathcal{O}_{A_3}}.
\end{align*}
Thanks to the characteristic line formula, there exists $\delta>0$ such that
\begin{align*}
u^r(T^*_0,x,a;u_0,v_0)>\delta,~v^r(T^*_0,x,a;u_0,v_0)>\delta,~\forall (x,a)\in \overline{\mathcal{O}_{A_3}}.
\end{align*}

Consider $\phi>0$ is the eigenfunction associated with $\lambda_{d_u,-m+\mu}>0$ 
\begin{align*}
\begin{split}
\left\{\begin{array}{l}
\phi_a-d_u\Delta_N\phi+\mu(x,a)\phi-m(x)\phi=-\lambda_{d_u,-m+\mu}\phi,\\
\phi(x,0) = \displaystyle\int_0^{A_3} \beta(x,a) \phi(x,a)da.
\end{array}\right.
\end{split}
\end{align*}
and $\varphi>0$ be the eigenfunction associated with $\lambda_{d_v,-p+q+\mu}<0$
\begin{align*}
\begin{split}
\left\{\begin{array}{l}
\varphi_a-d_v\Delta_N\varphi+\mu(x,a)\varphi-p(x)\varphi+q(x)\varphi=-\lambda_{d_v,-p+q+\mu}\varphi,\\
\varphi(x,0) = \displaystyle\int_0^{A_3} \beta(x,a) \varphi(x,a)da.
\end{array}\right.
\end{split}
\end{align*}
Define $\underline{u}:=\epsilon \phi$ and $\underline{v}:=0$ for some $\epsilon>0$. Choose $\epsilon$ small enough so that
\begin{align*}
\underline{u}_a-d_u\Delta_N\underline{u}+\mu(x,a)\underline{u}-m(x)\underline{u}&\leq -n(x)\underline{u}^2\leq -n(x)\underline{u}^2+q(x)\underline{v},
\end{align*}
and
\begin{align*}
\underline{v}_a-d_u\Delta_N\underline{v}+\mu(x,a)\underline{v}\leq -p(x)\dfrac{\underline{u}\underline{v}}{\underline{u}+\underline{v}}+q(x)\underline{v}.
\end{align*}
Thus, $(\underline{u},\underline{v})$ is a sub-solution of \eqref{eq:uppermain}. 

On the other hand, we choose $\alpha$ large enough so that the upper-bound age-structured condition of $Y_{\alpha}$ is satisfied. Put $\overline{u}:=K_1Y_{\alpha}(a)$, $\overline{v}:=K_2\varphi(x,a)$ for some $K_1,~K_2$ large enough. One can show that $\overline{v}$ satisfies
\begin{align*}
w_{a}&=d_v\Delta_Nw+
[p(x)-q(x)-\mu(x,a)]w-\lambda_{d_v,-p+q+\mu}w\\
&\geq d_v\Delta_Nw+\left[p(x)\dfrac{\overline{u}}{\overline{u}+w}-q(x)-\mu(x,a)\right]w
\end{align*} 
since $\lambda_{d_v,-p+q+\mu}<0$. It follows from arguments similar to those in Lemma \ref{Lemma:super} that $(\overline{u},\overline{v})$ is a super-solution of \eqref{eq:uppermain1}. Choose $\epsilon$ smaller and $K_1$, $K_2$ larger so that
\begin{align*}
&\underline{u}\leq u^r(T^*_0;u_0,v_0)\leq \overline{u};\\
&\underline{v}\leq v^r(T^*_0 ;u_0,v_0)\leq \overline{v}.
\end{align*}
on $\overline{\mathcal{O}_{A_3}}$. By the uniqueness and the comparison principle (Proposition \ref{prof:comparsys}), one has
\begin{align}\label{mainproof}
\begin{array}{llll}
&u^r(t ;\underline{u},\underline{v})\leq u^r(t+T^*_0 ;u_0,v_0)\leq u^r(t ;\overline{u},\overline{v});\\
&v^r(t ;\underline{u},\underline{v})\leq v^r(t+T^*_0 ;u_0,v_0)\leq v^r(t ;\overline{u},\overline{v}),
\end{array}
\end{align}
on $\overline{\mathcal{O}_{A_3}}$, for any $t>0$. It is known that $(u^r(t ;\overline{u},\overline{v}),v^r(t ;\overline{u},\overline{v}))$ and $(u^r(t ;\underline{u},\underline{v}),v^r(t ;\underline{u},\underline{v}))$ exist globally in time on $\overline{\mathcal{O}_{A}}$ (see Remark \ref{remark2} and \ref{remark3}).

Due to the uniqueness of \eqref{eq:uppermain} on $\overline{\mathcal{O}_{A}}$, it is easy to check that
\begin{align*}
u(t ;\underline{u},\underline{v})=u(t ;\underline{u});~v(t ;\underline{u},\underline{v})=0~\text{  on $\mathcal{O}_{A}$ for any $t>0$}
\end{align*}
and $u(t ;\underline{u})$ solves 
\begin{align*}
\begin{split}
\left\{\begin{array}{ll}
u_t + u_{a}=d_u\Delta_N u+m(x)u-n(x)u^2
-\mu(x,a)u,&t>0,~(x,a)\in\mathcal{O}_A, \\
u(t,x,0) = \displaystyle\int_0^{A} \beta(x,a) u(t,x,a)da,&t>0,\textbf{ }x\in\O,\\
u(0,x,a) = \underline{u}(x,a),&(x,a)\in\mathcal{O}_{A}.
\end{array}\right.
\end{split}
\end{align*}

Now, let us work with the right-hand side of \eqref{mainproof}. By comparison principle for age-structured model in Proposition \ref{prof:comparsys}, one has
\begin{align*}
u^r(t,x,a;\overline{u},\overline{u})\leq \overline{v};\quad v^r(t,x,a;\overline{u},\overline{v})\leq \overline{v},~\forall (x,a)\in \mathcal{O}_{A},~t>0.
\end{align*}
As the results, the uniqueness of solution admits that
\begin{align*}
\begin{array}{llll}
u^r(t+s,x,a;\overline{u},\overline{v})\leq u^r(t,x,a;\overline{u},\overline{v}),\\ v^r(t+s,x,a;\overline{u},\overline{v})\leq v^r(t,x,a;\overline{u},\overline{v}),
\end{array}
~\forall (x,a)\in \mathcal{O}_{A},~t,~s>0.
\end{align*}
Thus, there exists $w_1$ and $w_2$ such that as $t\rightarrow \infty$ 
\begin{align*}
&u^r(t,x,a;\overline{u},\overline{v}) \rightarrow w_1(x,a) \text{ pointwise},\\
&v^r(t,x,a;\overline{u},\overline{v})\rightarrow w_2(x,a) \text{ pointwise},
\end{align*}
for any $(x,a)\in \mathcal{O}_{A}$. By Lebesgue dominated convergence theorem and the fact that $(u,v)$ is bounded on $(0,\infty)\times \mathcal{O}_{A}$, one gets 
\begin{align*}
u^r(t_n ;\overline{u},\overline{v})\rightarrow w_1 \text{ strongly in }L^2(\mathcal{O}_{A}),\\
v^r(t_n ;\overline{u},\overline{v})\rightarrow w_2 \text{ strongly in }L^2(\mathcal{O}_{A}).
\end{align*}
for any sequence $t_n\rightarrow \infty$ as $n\rightarrow \infty$ (up to a sub-sequence). As the results, similar to the one in Lemma \ref{omegasetlimit}, one must have $w_1=u_1,~w_2=0$ and, as $t\rightarrow \infty$,
\begin{align*}
\begin{array}{llll}
&u^r(t ;\overline{u},\overline{v})\rightarrow u_1 &\text{ strongly in }L^2(\mathcal{O}_{A}),\\
&v^r(t ;\overline{u},\overline{v})\rightarrow 0 &\text{ strongly in }L^2(\mathcal{O}_{A}),\\
&u^r(t,x,a;\overline{u},\overline{v}) \rightarrow u_1(x,a) &\text{ pointwise},\\
&v^r(t,x,a;\overline{u},\overline{v})\rightarrow 0 &\text{ pointwise},
\end{array}
\end{align*}
where $u_1$ solves
\begin{align*}
\begin{split}
\left\{\begin{array}{llll}
 u_{a}=d_u\Delta_Nu +m(x)u-n(x)u^2
-\mu(x,a)u,&(x,a)\in\mathcal{O}_{A}, \\
u(x,0) = \displaystyle\int_0^{A} \beta(x,a) u(x,a)da,&x\in \O.
\end{array}\right.
\end{split}
\end{align*}
It follows from Dini's theorem that
\begin{align*}
\begin{array}{llll}
&u^r(t ;\overline{u},\overline{v}) \rightarrow u_1 &\text{ uniformly on }\overline{\mathcal{O}_{A_3}},\\
&v^r(t ;\overline{u},\overline{v})\rightarrow 0 &\text{ uniformly on }\overline{\mathcal{O}_{A_3}}.
\end{array}
\end{align*}
By similar arguments for the left-hand side of \eqref{mainproof}, one has
\begin{align*}
\begin{array}{llll}
&u^r(t ;\underline{u},\underline{v})=u^r(t ;\underline{u}) \rightarrow u_1 &\text{ uniformly on }\overline{\mathcal{O}_{A_3}}.\\
\end{array}
\end{align*}
This is possible since 
\begin{align*}
\begin{split}
\left\{\begin{array}{llll}
 u_{a}=d_u\Delta_Nu +m(x)u-n(x)u^2
-\mu(x,a)u,&(x,a)\in\mathcal{O}_{A_3}, \\
u(x,0) = \displaystyle\int_0^{A_3} \beta(x,a) u(x,a)da,&x\in \O.
\end{array}\right.
\end{split}
\end{align*}
admits the unique solution $u_1$ and $\supp(\beta)\subset \overline{\O}\times (A_0,A_2] \subset[0,A_3]$. Consequently, one can prove that
\begin{align*}
\begin{array}{llll}
&u^r(t ;u_0,v_0) \rightarrow u_1 &\text{ uniformly on }\overline{\mathcal{O}_{A_3}},\\
&v^r(t ;u_0,v_0)\rightarrow 0 &\text{ uniformly on }\overline{\mathcal{O}_{A_3}}.
\end{array}
\end{align*}
This completes the proof.
\end{proof}

\phantom{1}

Now, let us return the system \eqref{eq:main}. Recall 
\begin{align*}
(u,v)=(u(t),v(t))=(u(t;u_0,v_0),v(t;u_0,v_0))
\end{align*}
is the solution of the system \eqref{eq:main}. Thanks to estimations in Theorem \ref{theo:2}, one can have that
\begin{align*}
&0<u(t;u_0,v_0)\leq u^r(t;u_0,v_0),\\
&0<v(t;u_0,v_0)\leq v^r(t;u_0,v_0)\leq C_2 e^{\lambda_{d_v,-p+q+\mu} t},
\end{align*}
where $C_2$ can be found in Lemma \ref{lemma:conver0}. Thus, one has, as $t\rightarrow \infty $
\begin{align}\label{zeroinfected}
||v(t;u_0,v_0)||_{L^{\infty}(\mathcal{O}_A)}\rightarrow 0,
\end{align}
and 
\begin{align*}
0\leq \liminf\limits_{t\rightarrow \infty} u(t;u_0,v_0)\leq \limsup\limits_{t\rightarrow \infty} u(t;u_0,v_0) \leq u_1 \text{ uniformly on }\overline{\mathcal{O}_{A_3}}.
\end{align*}
for $A_2<A_3<A$. In addition, the following properties hold 
\begin{align*}
||u(t;u_0,v_0)||_{L^{\infty}(\mathcal{O}_{A})}+||v(t;u_0,v_0)||_{L^{\infty}(\mathcal{O}_{A})}\leq 2N,~\forall t>0.
\end{align*}
and
\begin{align*}
||u(t;u_0,v_0)||_{L^{2}(\mathcal{O}_{A})}+||v(t;u_0,v_0)||_{L^{2}(\mathcal{O}_{A})}\leq H,~\forall t>0,
\end{align*} 

We are position to prove the stability of disease-free equilibrium for system \eqref{eq:main}.

\begin{proof}[Proof of Theorem \ref{Theo:2} \ref{sta:DFE}]
Consider $A_2<A_3<A$. Thanks to the fact that $u>0$ on $\mathcal{O}_{A_3}$ and \eqref{zeroinfected}, it follows from the mild formula that for any $\varepsilon>0$, there exists $T=T(\varepsilon)>0$ such that
\begin{align*}
&\left|u(t)-\Phi_{d_u,\mu}(t)(u_0)-\int_0^t\Phi_{d_u,\mu}(t-s)\left[m(x)u(s)-n(x)u^2(s)\right]ds\right|\\
&<\varepsilon \int_0^t\Phi_{d_u,\mu}(t-s)ds,
\end{align*}
for any $t>T$ and uniformly on $\overline{\mathcal{O}_{A}}$. Due to the uniqueness, one can rewrite as follows
\begin{align*}
&\left|u(t+T)-\Phi_{d_u,\mu}(t)(u(T))-\int_{T}^{t+T}\Phi_{d_u,\mu}(t+T-s)\left[m(x)u(s)-n(x)u^2(s)\right]ds\right|\\
&<\varepsilon \int_{T}^{t+T}\Phi_{d_u,\mu}(t+T-s)ds,
\end{align*}
for any $t>0$ and uniformly on $\overline{\mathcal{O}_{A}}$. Consider $A_2<A_3<A$ and $\delta$ small enough. Choose $\varepsilon = \delta \min\limits_{ \overline{\mathcal{O}_{A_3}}}n(x)u_1^2>0$, one can deduce that
\begin{align*}
u(t+T)&>\Phi_{d_u,\mu}(t)(u(T))\\
&+\int_{T}^{t+T}\Phi_{d_u,\mu}(t+T-s)\left[m(x)u(s)-n(x)u^2(s)-\delta n(x)u_1^2 \right]ds
\end{align*}
Define
\begin{align}\label{eq:below}
\begin{split}
w(t) &:= \Phi_{d_u,\mu}(t)(u(T;u_0,v_0))\\
&+\int_{T}^{t+T}\Phi_{d_u,\mu}(t+T-s)\left(m(x)w(s)-n(x)w^2(s)-\delta  n(x)u_1^2 \right)ds
\end{split}
\end{align}
This may have the steady state $w^-$ satisfying
\begin{align}\label{steady-state1}
\left\{\begin{array}{llll}
w_a-\Delta_Nw +\mu(x,a)w= m(x)w-n(x)w^2  -\delta n(x)u_1^2,\\
w(x,0) = \displaystyle \int_0^A\beta(x,a)w(x,a)da.
\end{array}\right.
\end{align}
To prove the existence of the steady state, for $\epsilon>0$ that will be chosen later, we define $w^{-}=(1-\epsilon)u_1$. Then, one has
\begin{align*}
w^{-}_a - \Delta_N w^- = m(x)w^--n(x)w^- u_1=m(x)w^--n(x)(w^-)^2 -\epsilon(1-\epsilon) n(x)u_1^2
\end{align*}
At this point, we choose $\epsilon>0$ is the positive solution of $\epsilon^2 -\epsilon + \delta =0$. If $0<\delta<\dfrac{1}{4}$, we obtain $\epsilon = \dfrac{1-\sqrt{1-4\delta}}{2}>0$. In addition, it follows from a bootstrap argument that any steady-state solution belongs to $ C\left(\overline{\mathcal{O}_{A_3}}\right)\cap C^{2,1}\left(\mathcal{O}_{A_3}\right)$.


Now, inspired of the technique in \cite[Theorem 3.3]{zhao_spatiotemporal_2023}, we define $\kappa = 2\sup\limits_{\O}\dfrac{m}{n}$ and
\begin{align*}
j(x,\tau)=\left\{\begin{array}{llll}
m(x)\tau,&\tau \leq 0\\
m(x)\tau -n(x)\tau^2,&0\leq \tau \leq \kappa\\
n(x)\kappa^2+m(x)\tau-2n(x)\kappa\tau,&\tau \geq \kappa.
\end{array}\right.
\end{align*}
Clearly, $j\in C^{1}\left(\overline{\O}\times \R\right)$. We define  the following Banach space $H^1_{\beta}(\mathcal{O}_{A_3})$
\begin{align*}
H^1_{\beta}(\mathcal{O}_{A_3}):=\left\{\phi \in H^1(\mathcal{O}_{A_3}):\phi(x,0)=\displaystyle\int_0^A\beta(x,a)\phi(x,a)da\right\}
\end{align*}

Define $\mathcal{F}:H^1_{\beta}(\mathcal{O}_{A_3})\times \R\rightarrow \mathcal{L}(H^1_{\beta}(\mathcal{O}_{A_3}),\R)$, the duality space of $H^1_{\beta}(\mathcal{O}_{A_3})$, as follows
\begin{align*}
\left<\mathcal{F}(w,\delta)~|~v\right>&:=\left<\mathcal{A}_{d_u,\mu}w~|~v\right>+\left<j(x,w)-\delta n(x)u^2_1~|~v\right>\\
&=-d_u\int_{\O}\nabla w\nabla v dx-\int_{\O}w_a v dx - \int_{\O}\mu(x,a)w v dx\\
& +\left<j(x,w)-\delta n(x)u^2_1~|~v\right>.
\end{align*}
for $w,~v\in H^1_{\beta}(\mathcal{O}_{A_3})$. One can check that $\mathcal{F}\in C^1\left(H^1_{\beta}(\mathcal{O}_{A_3})\times \R\right)$ and
\begin{align*}
\mathcal{F}(u_1,0)=0,~\left<\partial_{w}\mathcal{F}(u_1,0)w~|~v\right>=\left<\mathcal{A}_{d_u,\mu}w~|~v\right>+\left<[m(x)-2n(x)u_1]w~|~v\right>
\end{align*}

Similar to the technique used in Theorem \ref{main:theo1} \ref{EE}, it is known that $2nu_1-m+\mu\geq nu_1-m+\mu$ and $d_u\Delta_Nu_1-(u_1)_{a} 
-\left[m(x)-n(x)u_1-\mu(x,a)\right]u_1=0$. Thus, $\lambda_{d_u,-m+2nu_1+\mu}<\lambda_{d_u,-m+nu_1+\mu}=0$. This leads to the fact that the spectrum of $\partial_{w}\mathcal{F}(u_1,0)$ is in the negative real-part of the complex plane and, thus, $\partial_{w}\mathcal{F}(u_1,0)$ is invertible. It then follows from the implicit function theorem that $\mathcal{F}(w,\delta)=0$ has a unique solution on $H^1_{\beta}(\mathcal{O}_A)$ for any $\delta>0$ small enough. Similar to Theorem \ref{Theo:10}, it follows from \eqref{eq:below} that there exists $T_0>0$ such that
\begin{align*}
w(t)>w^- -\epsilon ||u_1||_{\infty},~\forall t>T_0,\text{ uniformly on }\overline{\mathcal{O}_{A_3}}.
\end{align*}
By the age-structured comparison principle (Proposition \ref{prof:comparsys1}), one has
\begin{align*}
u(t;u_0,v_0)> u_1-2\epsilon ||u_1||_{\infty}=u_1-(1-\sqrt{1-4\delta}) ||u_1||_{\infty},~\forall t>T+T_0.
\end{align*}
$\text{uniformly on }\overline{\mathcal{O}_{A_3}}$. This implies that
\begin{align*}
u_1-(1-\sqrt{1-4\delta}) ||u_1||_{\infty}\leq \liminf\limits_{t\rightarrow \infty} u(t;u_0,v_0)\leq \limsup\limits_{t\rightarrow \infty} u(t;u_0,v_0) \leq u_1 .
\end{align*}
$\text{uniformly on }\overline{\mathcal{O}_{A_3}}$. This is true for any $\delta>0$ small enough. One can deduce that
\begin{align*}
\lim\limits_{t\rightarrow \infty} u(t;u_0,v_0)=u_1 \text{ uniformly on }\overline{\mathcal{O}_{A_3}}.
\end{align*}

Finally, it is easy to check that 
\begin{align*}
\begin{array}{llll}
&u(t;u_0,v_0) \rightarrow u_1 \text{ pointwise on $\mathcal{O}_{A}$,}\\
\end{array}
\end{align*}
since we can choose a suitable $A_3$ for each $a$. It follows from Lebesgue dominated convergence theorem that
\begin{align*}
\begin{array}{llll}
&u(t;u_0,v_0)\rightarrow u_1 \text{ strongly in }L^2(\mathcal{O}_A),\\
\end{array}
\end{align*}
This concludes the proof.

\end{proof}

%



%

%

\subsection{\bf Stability of the endemic equilibrium $\mathcal{R}_0>1$}

Let us begin the section with some remarks. For any $(w,k)\in \omega(u_0,v_0)$, there exists $t_n$ such that
\begin{align*}
\begin{array}{llll}
&u^r(t_n;u_0,v_0)\rightarrow w &\text{ weakly on }L^2(\mathcal{O}_{A}),\\
&v^r(t_n;u_0,v_0)\rightarrow k &\text{ weakly on }L^2(\mathcal{O}_{A}).
\end{array}
\end{align*}
as $n\rightarrow \infty$. If these convergences are strong (up to a sub-sequence), we can prove $(w,k)$ is the solution of \eqref{eq:uppermain1} by similar calculations to the one in Lemma \ref{omegasetlimit}. An example for the convergences to be strong is by choosing $(u_0,v_0)$ as a super-solution or a sub-solution of \eqref{eq:uppermain1}. However, due to the lack of uniqueness, the global stability of the endemic equilibrium is not generally achievable, i.e., convergence as 
$t\rightarrow \infty$ cannot be ensured. As the results, the estimation of $(u,v)$, the solution of \eqref{eq:main}, becomes more complicated. Recall, there exists $T^*_0>A$ such that
\begin{align*}
u(t)>0,~v(t)>0,~\forall t\geq T^*_0 \text{ on } \overline{\O}\times [0,A).
\end{align*}

Let us state some results about long-time dynamics of \eqref{eq:main}.


\begin{proof}[Proof of Theorem \ref{Theo:2} \ref{sta:EE}]

Consider $A_2<A_3<A$, recall that
\begin{align*}
\displaystyle\int_0^{A} \beta(x,a) \phi(x,a)da=\displaystyle\int_0^{A_2} \beta(x,a) \phi(x,a)da = \displaystyle\int_0^{A_3} \beta(x,a) \phi(x,a)da
\end{align*}
since $\supp(\beta)\subset \overline{\O}\times (A_0,A_2]$. Let $\phi>0$ be the eigenfunction associated with $\lambda_{d_u,-m+\mu}>0$ 
\begin{align*}
\begin{split}
\left\{\begin{array}{l}
\phi_a-d_u\Delta_N\phi+\mu(x,a)\phi-m(x)\phi=-\lambda_{d_u,-m+\mu}\phi,\\
\phi(x,0) = \displaystyle\int_0^{A_3} \beta(x,a) \phi(x,a)da.
\end{array}\right.
\end{split}
\end{align*}
and $\varphi>0$ be the eigenfunction associated with $\lambda_{d_v,-p+q+\mu}>0$
\begin{align*}
\begin{split}
\left\{\begin{array}{l}
\varphi_a-d_v\Delta_N\varphi+\mu(x,a)\varphi-p(x)\varphi+q(x)\varphi=-\lambda_{d_v,-p+q+\mu}\varphi,\\
\varphi(x,0) = \displaystyle\int_0^{A_3} \beta(x,a) \varphi(x,a)da.
\end{array}\right.
\end{split}
\end{align*}

Similar to the one in Theorem \ref{Theo:10}, we choose $\epsilon_1>0$ small enough so that
\begin{align*}
\epsilon_1 \phi\leq u(T^*_0;u_0,v_0) \text{ on }\overline{\mathcal{O}_{A_3}}.
\end{align*}
and $\underline{u}:=\epsilon_1 \phi$ satisfies
\begin{align*}
\underline{u}_a-d_u\Delta_N\underline{u}+\mu(x,a)\underline{u}-m(x)\underline{u}&\leq -n(x)\underline{u}^2-p(x)\underline{u}
\end{align*}


Next, choose $h=\underline{u}$, $k_1=p$ and $k_2=q$ in Theorem A.\ref{Lemma:A.10}-Step 1 and $\epsilon_2>0$ small enough such that $\underline{v}:=\epsilon_2 \varphi$ satisfies
\begin{align*}
\underline{v}_{a}-d\Delta_N\underline{v}+
\mu(x,a)\underline{v}\leq p(x)\dfrac{\underline{u}}{\underline{u}+\underline{v}}\underline{v}-q(x)\underline{v} \text{ on }\mathcal{O}_{A_3},
\end{align*}
and
\begin{align*}
\epsilon_2 \varphi\leq v(T^*_0;u_0,v_0) \text{ on }\overline{\mathcal{O}_{A_3}}.
\end{align*}
As the results, $(\underline{u},\underline{v})$ is a sub-solution of the system associated $\textbf{F}^l$ in Lemma \ref{local} $\text{on }\overline{\mathcal{O}_{A_3}}$. Now, choose $\overline{u}:=K_1Y_{\alpha}(a)$ and $\overline{v}:=K_2Y_{\alpha}(a)$ with $K_1$, $K_2$ and $\alpha$ large enough such that $(\overline{u},\overline{v})$ is a super-solution of the system associated $\textbf{F}^r$ in Lemma \ref{Lemma:super} and
\begin{align*}
\begin{array}{llll}
&\underline{u}\leq u(T^*_0;u_0,v_0)\leq \overline{u}\\
&\underline{v}\leq v(T^*_0;u_0,v_0)\leq \overline{v}
\end{array}
\text{ on $\overline{\mathcal{O}_{A_3}}$.}
\end{align*}
By the uniqueness and the comparison principle (Proposition \ref{prof:comparsys1}), it follows that
\begin{align}\label{mainproof1}
\begin{array}{llll}
&\underline{u}\leq u(t+T^*_0;u_0,v_0)\leq \overline{v}\\
&\underline{v}\leq v(t+T^*_0;u_0,v_0)\leq \overline{v}
\end{array}
\text{ on $\overline{\mathcal{O}_{A_3}}$, for any $t>0$. }
\end{align}
As the result, there exist $\varepsilon_1,~\varepsilon_2>0$ such that
\begin{align*}
\begin{array}{llll}
&0<\varepsilon_1< \liminf\limits_{t\rightarrow \infty}u(t;u_0,v_0)\leq \limsup\limits_{t\rightarrow \infty}u(t;u_0,v_0)\leq  K_1,\\
&0<\varepsilon_2< \liminf\limits_{t\rightarrow \infty}v(t;u_0,v_0)\leq \limsup\limits_{t\rightarrow \infty}v(t;u_0,v_0)\leq K_2,
\end{array} \text{ uniformly on $\overline{\mathcal{O}_{A_3}}$.}
\end{align*}
which conclude the proof.
\end{proof}
 The results in Theorem \ref{Theo:2} \ref{sta:EE} show the persistent of the disease in the population when $\mathcal{R}_{0}>1$, a potential outbreak of the disease.


%

\section{\bf Stability of the endemic equilibrium $\mathcal{R}_0>1$ when $m=n=0$}\label{section:5}
In this section, we reduce the logistic term $mu-nu^2$. The main objective is to establish the stability of the endemic equilibrium by showing the uniqueness of the steady state in a certain sense, as will be discussed later. Although the background setting is similar, we will highlight some important differences. From this point, we assume that
\begin{enumerate}[label=(B\arabic*)]  
\setcounter{enumi}{0}
\item \label{condA'1} $d_u=d_v=d,~p,~q>0 \text{ are  constants}$ and $m=n=0$. 
\end{enumerate}

Let us consider an alternative version of the assumptions \ref{cond:cond3} to \ref{cond:cond5} as follows
\begin{enumerate}[label=(B\arabic*)]  
\setcounter{enumi}{1}
 \item \label{condA'2} $\mu\in C^{1}([0,A))$, $\mu>0$ satisfies
\begin{align*} 
\int_0^A \mu(k)dk = \infty.
\end{align*}
and
\begin{align*}
a_0\mapsto \sup_{a\in [0,a_0]}\mu(a)\text{ is continuous with $a_0\in [0,A]$.}
\end{align*}

\item \label{condA'3} $\beta$ is a non-negative function in $C^{1}([0,A])$ and
there exists $A_0,~A_1,~A_2 \in (0,A)$, $A_0<A_1<A_2$ such that 
\begin{align*}
\beta(a)>0,~\forall a\in (A_0,A_1) \text{ and }\supp(\beta)\subset(A_0,A_2],
\end{align*}
and
\begin{align*}
R:=\displaystyle\int_0^A\beta(a)e^{-\displaystyle\int_0^a\mu(k)dk}da= 1.
\end{align*}

\end{enumerate}

In \cite{gurtin_non-linear_1974}, the authors interpreted $R$ as the expected number of children born to an individual or the net reproduction rate.
For convenience, we define
\begin{align*}
\pi_0(a) := e^{-\displaystyle\int_0^a\mu(k)dk},~L_0(a) := e^{\displaystyle\int_0^a\mu(k)dk}.
\end{align*}


Let us consider the initial condition $u_0,~v_0$ of the system \eqref{eq:main-no_m_n} satisfies 
\begin{align}\label{bounded:above_nospace}
0\leq u_0\leq K\pi_0,~0\leq v_0\leq H\pi_0 \text{ on }\mathcal{O}_A.
\end{align}
for some constants $K,~H>0$. Similar to Lemma \ref{local}, there exists 
\begin{align*}
(u,v)=(u(t;u_0,v_0),v(t;u_0,v_0))=(u(t,x,a;u_0,v_0),v(t,x,a;u_0,v_0))
\end{align*}
is a solution of \eqref{eq:main-no_m_n} and it is uniqueness in local time. Then, from first and second equation, with $w:=u+v$, one has
\begin{align*}
w_t+w_a=d\Delta_Nw-\mu(a)w.
\end{align*}
Furthermore, one can check $\overline{w}:=(K+H)\pi_0(a)$ satisfying $\overline{w}_a= -\mu(a)\overline{w}$. It follows from the age-structured comparison principle that $w\leq \overline{w}$ on $\mathcal{O}_A$. As the results, $0\leq u\leq \overline{w}$ on $\mathcal{O}_A$ and $0\leq v\leq \overline{w}$ on $\mathcal{O}_A$ for suitable time $t>0$, meaning the solution exists globally for any $t>0$. We further obtain that $u(t)>0,~v(t)>0$ for $t>0$ if $u_0\not\equiv 0$, $v_0\not\equiv 0$.

Now, let us consider the system
\begin{align}\label{eq:main_nospace}
\begin{split}
\left\{\begin{array}{ll}
u_t + u_{a}=
-\dfrac{puv}{u+v}+qv-\mu(a)u,&t>0,~a\in(0,A),\\
v_t + v_{a}=
\dfrac{puv}{u+v}-qv-\mu(a)v,&t>0,~a\in(0,A),\\
u(t,0) = \displaystyle\int_0^{A} \beta(a) u(t,a)da,&t>0,\\
v(t,0) = \displaystyle\int_0^{A} \beta(a) v(t,a)da,&t>0,\\
u(0,a) = u_0(a),~
v(0,a) = v_0(a),&a\in(0,A),
\end{array}\right.
\end{split}
\end{align}
Similarly, there exists $(u,v)=(u(t),v(t))=(u(t,a;u_0,v_0),v(t,a;u_0,v_0))$ is a solution of \eqref{eq:main_nospace} and it is uniqueness. On the other hand, this solution is also the solution of \eqref{eq:main-no_m_n}. As the result, the solution of \eqref{eq:main-no_m_n} is independent of space when the initial condition is independent of space.

Let us analyze the steady state system \eqref{eq:main1_nospace}.
For $u_0\in C_c([0,A))\setminus\{0\}$, $u_0\geq 0$, when $v=0$, \eqref{eq:main-no_m_n} admits a unique solution $u(t)=u(t,a;u_0)$ satisfying
\begin{align*}
\left\{\begin{array}{llll}
u_t + u_{a}=
-\mu(a)u,\\
u(t,0)=\displaystyle \int_0^A\beta(a)u(t,a)da,\\
u(0,a)=u_0(a)
\end{array}
\right.
\end{align*}
and \eqref{eq:main_nospace} admits
\begin{align*}
u_*(a) = u_*(0)\pi_0(a). 
\end{align*}
for some constant $u_*(0)\geq 0$. From the work of \cite[Remark 2.3.1 and Theorem 2.3.3]{anita_2000}, for each $u_0\geq 0$ such that \begin{align*}
\int_0^A\beta(a)u_0(a)da\neq 0
\end{align*}
there exists $u_*(0)>0$ such that $u(t)\rightarrow u_*$ as $t\rightarrow \infty$ uniformly on $a\in (0,A)$. Next, we linearize the second equation of \eqref{eq:main-no_m_n} around the  disease-free equilibrium $E^0=(u_*,0)$ to obtain
\begin{align}\label{linear_age1}
\left\{\begin{array}{llll}
v_t+v_a=(p-q-\mu(a))v,&t>0,~a\in (0,A),\\
v(t,0)=\displaystyle\int_0^A\beta(a)v(t,a)da.
\end{array}\right.
\end{align}
Using the characteristic line method in \cite[Chapter 2]{evans_partial_2010}, we obtain that
\begin{align*}
v(t,a)=\left\{\begin{array}{lll}
e^{(p-q)t}\dfrac{\pi_0(a)}{\pi_0(a-t)}v_0(a-t)&a\geq t\\
e^{(p-q)a}\pi_0(a)v(t-a,0)&a<t.
\end{array}\right.
\end{align*}
We substitute this to the age-structure condition to obtain
\begin{align*}
v(t,0)&=\int_0^{\min(t,A)}\beta(a)\pi_0(a)e^{(p-q)a}v(t-a,0)da\\
&+\int^A_{\min\{t,A\}} \beta(a)\dfrac{\pi_0(a)}{\pi_0(a-t)} e^{(p-q)t}v_0(a-t)da.
\end{align*}
From here, similar to the one in \cite{chekroun_global_2020,yang_asymptotical_2023,ducrot_travelling_2009}, we define the basic reproduction number $\mathcal{R}_0$ as follows
\begin{align*}
\mathcal{R}_0 := \int_0^{A}\beta(a)\pi_0(a)e^{(p-q)a}da.
\end{align*}
Thanks to the condition \ref{condA'3}, one has $\mathcal{R}_0 > 1$ if and only if $p>q$. In addition, $p=q$ if and only if $\mathcal{R}_0 = 1$. On the other hand, without state the exact formula of the next infected generation operator $\mathcal{I}$, one has 
\begin{theorem}
Assume \ref{condA'1} to \ref{condA'3} hold. Let $\mathcal{I}$ be the next infected generation operator (NIGO)  and $\lambda_{d_v,-p+q+\mu}$ denote the principal eigenvalue associated with  \eqref{linear_age1}. Then, $\mathcal{R}_0 = r(\mathcal{I})$, the spectral radius of $\mathcal{I}$ and $\mathcal{R}_0>1$ if and only if $\lambda_{d_v,-p+q+\mu}>0$. Furthermore, $\mathcal{R}_0=1$ if and only if $\lambda_{d_v,-p+q+\mu}=0$.
\end{theorem}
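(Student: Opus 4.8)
The plan is to exploit that in the reduced setting of Section~\ref{section:5}---where $m=n=0$, $p,q,d$ are constants and $\mu,\beta$ depend only on $a$---the full eigenvalue problem separates in the space variable $x$, so that the principal eigenvalue of $\mathcal{A}_{d_v,-p+q+\mu}$ becomes the unique root of a scalar characteristic equation whose value at $\lambda=0$ is exactly $\mathcal{R}_0$. Everything is then an explicit computation together with a monotonicity argument.

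First I would establish $\mathcal{R}_0=r(\mathcal{I})$ directly. In this setting the semigroup $\Psi(a)$ of \eqref{eq:semi-group} separates, since $p-q-\mu(a)$ splits into the scalar factor $p-q-\mu(a)$ and the Neumann heat flow, giving $\Psi(a)\varphi=e^{(p-q)a}\pi_0(a)\,T_{d_v\Delta_N}(a)\varphi$. Hence the NIGO of \eqref{for:NIGO} becomes $\mathcal{I}(\varphi)=\int_0^A\beta(a)e^{(p-q)a}\pi_0(a)\,T_{d_v\Delta_N}(a)\varphi\,da$. Because the constant function $\mathbf 1$ is the principal Neumann eigenfunction, $T_{d_v\Delta_N}(a)\mathbf 1=\mathbf 1$, and so $\mathcal{I}(\mathbf 1)=\big(\int_0^A\beta(a)e^{(p-q)a}\pi_0(a)\,da\big)\mathbf 1=\mathcal{R}_0\,\mathbf 1$. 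Thus $\mathbf 1\in\mathrm{Int}(\mathcal{Y}_+)$ is a positive eigenvector of $\mathcal{I}$ with eigenvalue $\mathcal{R}_0$, and the same Krein--Rutman argument already used for the NIGO yields $r(\mathcal{I})=\mathcal{R}_0$.

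Next I would characterize $\lambda:=\lambda_{d_v,-p+q+\mu}$ by separation of variables. Seeking eigenfunctions $\phi(x,a)=\psi_k(x)\Theta_k(a)$ with $-d_v\Delta_N\psi_k=\nu_k\psi_k$ (Neumann eigenvalues $0=\nu_0<\nu_1\le\cdots$), the equation \eqref{eq:eigenvalue} reduces to $\Theta_k'=(p-q-\mu(a)-\nu_k-\lambda)\Theta_k$, so $\Theta_k(a)=\Theta_k(0)e^{(p-q-\nu_k-\lambda)a}\pi_0(a)$, and the age condition forces $\int_0^A\beta(a)e^{(p-q-\nu_k-\lambda)a}\pi_0(a)\,da=1$, i.e. $\Xi(\lambda+\nu_k)=1$ with $\Xi(\lambda):=\int_0^A\beta(a)\pi_0(a)e^{(p-q)a}e^{-\lambda a}\,da$. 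Positivity of the principal eigenfunction selects the sign-definite mode $\nu_0=0$ (higher modes shift the eigenvalue downward by $\nu_k\ge 0$), so $\lambda_{d_v,-p+q+\mu}$ is the unique root of $\Xi(\lambda)=1$, with positive eigenfunction $\phi(x,a)=e^{(p-q-\lambda)a}\pi_0(a)$. Finally, the equivalences follow from elementary monotonicity: using \ref{condA'3} that $\beta>0$ on $(A_0,A_1)$, the map $\Xi$ is continuous and strictly decreasing on $\R$ with $\Xi(0)=\mathcal{R}_0$ and $\Xi(\lambda_{d_v,-p+q+\mu})=1$, whence $\lambda_{d_v,-p+q+\mu}>0\iff\Xi(0)>1\iff\mathcal{R}_0>1$ and likewise $\lambda_{d_v,-p+q+\mu}=0\iff\mathcal{R}_0=1$.

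The main obstacle I anticipate is justifying that the principal eigenvalue of the genuinely $x$-dependent operator $\mathcal{A}_{d_v,-p+q+\mu}$ coincides with the root of the scalar equation $\Xi(\lambda)=1$, i.e. that the principal eigenfunction is $x$-independent. This is where positivity (Krein--Rutman) must be combined with the separation argument: only the constant Neumann mode $\nu_0=0$ produces a sign-definite eigenfunction, while every higher mode contributes an eigenvalue $\lambda_{d_v,-p+q+\mu}-\nu_k<\lambda_{d_v,-p+q+\mu}$ with a sign-changing eigenfunction, so none of them can be the principal one. Once this separation is secured, the remaining steps are routine consequences of the explicit formulas made available by the $x$-independence of the coefficients.
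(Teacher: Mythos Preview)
Your argument is correct and complete; the separation-of-variables reduction is exactly what Lemma~A.\ref{Lemma A.1} in the Appendix provides (with $\mathcal{M}(a)=-p+q+\mu(a)$), so your flagged ``obstacle'' about the principal eigenfunction being $x$-independent is already fully handled in the paper: only the Neumann mode $\nu_0=0$ yields a positive eigenfunction, and the principal eigenvalue is characterized by the scalar equation you write.

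However, your route differs from what the paper (implicitly) intends. The paper states this theorem without proof, writing that the result holds ``without stat[ing] the exact formula of the next infected generation operator $\mathcal{I}$''; the intended justification is that the general argument of Section~\ref{section:3} already covers this case. That argument is an abstract duality computation: with $w^0$ the positive eigenvector of the dual $\mathcal{J}^*$, one compares $\langle w^0,\mathcal{J}_{\lambda}(u^0)\rangle$ at $\lambda=0$ and at $\lambda=\lambda_{d_v,-p+q+\mu}$ and uses monotonicity in $\lambda$ to force the sign relation. Your approach instead exploits the additional structure of \ref{condA'1}--\ref{condA'3}: you write down the NIGO explicitly as $\mathcal{I}(\varphi)=\int_0^A\beta(a)e^{(p-q)a}\pi_0(a)\,T_{d_v\Delta_N}(a)\varphi\,da$, observe that $\mathbf{1}$ is a positive eigenvector with eigenvalue $\mathcal{R}_0$, and then reduce the principal eigenvalue problem to the explicit characteristic equation $\Xi(\lambda)=\int_0^A\beta(a)\pi_0(a)e^{(p-q-\lambda)a}\,da=1$ via Lemma~A.\ref{Lemma A.1}. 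This is more elementary and more informative in the present setting---it actually identifies $\lambda_{d_v,-p+q+\mu}$ and the eigenfunction---whereas the paper's duality argument has the advantage of applying verbatim to the heterogeneous case of Section~\ref{section:3} where no such closed form exists.
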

\begin{remark}
Consequently, $\lambda_{d_v,-p+q+\mu}>0$ if and only if $p>q$. Furthermore, $\lambda_{d_v,-p+q+\mu}=0$ if and only if $p=q$.
\end{remark}

The remainder of this section closely follows the structure of the previous one. Next, from the first and second equations of \eqref{eq:main1_nospace}, with $w:=u+v$, one has
\begin{align*}
 w(a) = -\mu(a)w
\end{align*}
which is $w^*(a)=w(0)\pi_0(a)$ for some $w^*(0)>0$. 

We are in position to state the existence and uniqueness of \eqref{eq:main1_nospace} in the case endemic equilibrium as follows

\begin{theorem}\label{theo:16}
Assume \ref{condA'1} to \ref{condA'3} hold. Suppose further that  $\mathcal{R}_0>1$. Then, for any $w^*(0)>0$, \eqref{eq:main1_nospace} admits a unique positive solution $(u^*,v^*)$.
\end{theorem}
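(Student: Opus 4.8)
The plan is to reduce the $2\times 2$ age-structured boundary-value problem \eqref{eq:main1_nospace} to a single scalar fixed-point equation for the value $v(0)$, exploiting the fact that the total population $w=u+v$ decouples. Adding the two equations gives $w_a=-\mu(a)w$ together with the inherited renewal condition, so $w(a)=w^*(0)\pi_0(a)$ is completely determined once $w^*(0)$ is prescribed. Substituting $u=w-v=w^*(0)\pi_0-v$ into the second equation, the quotient nonlinearity collapses, $\frac{puv}{u+v}=\frac{pv(w-v)}{w}=pv-\frac{p}{w}v^2$, and $v$ satisfies the scalar Bernoulli equation
\begin{equation*}
v_a=(p-q-\mu(a))v-\frac{p}{w^*(0)\pi_0(a)}v^2,\qquad a\in(0,A).
\end{equation*}
A direct check shows conversely that for any solution $v$ of this equation the pair $(w^*(0)\pi_0-v,\,v)$ solves the first two equations of \eqref{eq:main1_nospace}; moreover, using $R=1$ from \ref{condA'3}, the renewal conditions on $u$ and on $v$ are \emph{both} equivalent to the single scalar identity $v(0)=\int_0^A\beta(a)v(a)\,da$.

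Next I would integrate the Bernoulli equation explicitly. Since $\mathcal{R}_0>1$ is equivalent to $p>q$, the substitution $z=1/v$ linearizes the equation, and with integrating factor $e^{(p-q)a}\pi_0(a)$ one obtains, writing $s:=v(0)>0$,
\begin{equation*}
v(a;s)=\frac{s}{c_1(a)+c_2(a)\,s},\qquad c_1(a)=\frac{e^{-(p-q)a}}{\pi_0(a)},\quad c_2(a)=\frac{p\,(1-e^{-(p-q)a})}{w^*(0)(p-q)\pi_0(a)},
\end{equation*}
with $c_1,c_2>0$ on $(0,A)$. The boundary-value problem is therefore equivalent to finding a positive fixed point of
\begin{equation*}
\Phi(s):=\int_0^A\beta(a)\,v(a;s)\,da,\qquad s=\Phi(s).
\end{equation*}

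The analysis of $\Phi$ is where the hypotheses pay off. From the explicit form, $s\mapsto v(a;s)$ is strictly increasing and strictly concave (its second $s$-derivative equals $-2c_1c_2/(c_1+c_2s)^3<0$), so $\Phi$ is strictly increasing, strictly concave and bounded, with $\Phi(0)=0$ and $\Phi'(0)=\int_0^A\beta(a)\pi_0(a)e^{(p-q)a}\,da=\mathcal{R}_0>1$. Hence $\psi(s):=\Phi(s)-s$ satisfies $\psi(0)=0$, $\psi'(0)=\mathcal{R}_0-1>0$, is strictly concave and tends to $-\infty$ (as $\Phi$ stays bounded while $-s\to-\infty$); it therefore has exactly one positive root $s^*$, yielding existence and uniqueness of the fixed point. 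To promote this to a genuinely positive solution I must verify $u^*=w^*(0)\pi_0-v^*>0$, i.e. $s^*\in(0,w^*(0))$ and $v(\cdot;s^*)<w$ throughout. For this I would use invariance of the cone $\{u>0,\,v>0\}$: on $\{u=0,\,v>0\}$ one has $u_a=qv>0$, so $u$ cannot cross zero; and taking $s=w^*(0)$ (so that $u(0)=0$) forces $u(a)>0$ for $a>0$, whence $v(a;w^*(0))<w(a)$ on $\supp\beta\subset(A_0,A_2]$ and thus $\Phi(w^*(0))<w^*(0)R=w^*(0)$, which gives $\psi(w^*(0))<0$ and therefore $s^*<w^*(0)$.

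Bijectivity of the correspondence then closes the argument: any positive solution $(u^*,v^*)$ has $w^*:=u^*+v^*=w^*(0)\pi_0$ and $v^*(0)$ a positive fixed point of $\Phi$, which is unique. The main obstacle I anticipate is not the reduction or the explicit integration but the positivity bookkeeping — keeping the selected fixed point inside the admissible range $(0,w^*(0))$ and ruling out orbits leaving the positive cone — since the quotient nonlinearity is only defined for $u+v>0$. The strict concavity of $\Phi$, together with the normalization $R=1$ and the threshold $\mathcal{R}_0>1$, is precisely what makes this step work.
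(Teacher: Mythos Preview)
Your proposal is correct and your reduction to the scalar Bernoulli equation for $v$ is exactly the same as the paper's first step. Where you diverge is in how you handle that scalar problem. The paper does not integrate the ODE explicitly; instead it observes that the equation
\[
v_a+\mu(a)v=(p-q)v-\frac{p}{w^*(0)\pi_0(a)}v^2
\]
is of KPP type on any truncated interval $[0,A_3]$ with $A_2<A_3<A$, and invokes the sub-super solution machinery and comparison principle developed in the Appendix (Theorem~A.\ref{Lemma A.7}) together with $\lambda^*_{d_v,-p+q+\mu}<0$ (equivalently $\mathcal R_0>1$) to get existence and uniqueness on $[0,A_3]$, then extends to $[0,A]$ by uniqueness. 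Your route is more elementary and more transparent in this specific spatially homogeneous setting: by solving the Bernoulli equation in closed form you reduce the nonlocal renewal condition to a one-dimensional fixed-point problem $s=\Phi(s)$, and the strict concavity of $\Phi$ together with $\Phi'(0)=\mathcal R_0>1$ and the bound $\Phi(w^*(0))<w^*(0)$ (from the cone-invariance argument) gives existence and uniqueness of $s^*\in(0,w^*(0))$ directly. Your approach also makes the positivity of $u^*=w^*-v^*$ explicit, which the paper glosses over with ``This same holds for $u^*$.'' The trade-off is that your argument is tailored to the ODE case and does not immediately generalize to the spatially heterogeneous problems treated elsewhere in the paper, whereas the paper's approach keeps the analysis within its unified age-structured spectral framework.
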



\begin{proof}
Let $A_3$ be the number satisfying $A_2<A_3<A$. Using the relation $v=w-u$, it follows from the second equation that
\begin{align}\label{eq:KPPv}
v_a+\mu(a) v = (p-q)v - \dfrac{p}{w^*(0)\pi_0(a)} v^2 \text{ on }\O\times (0,A_3)
\end{align}
Similar to Theorem A.\ref{Lemma A.7} and the fact that $\mathcal{R}_0>1$, for each $w^*(0)>0$,  there exists a unique $v^*$ is the solution of \eqref{eq:KPPv}. As the results, with the uniqueness, one can extend $v^*$ to the domain $\overline{\mathcal{O}_{A}}$ using similar arguments to the one in Theorem A.\ref{Lemma A.7}. This same holds for $u^*$.
\end{proof}

\begin{remark}
Consider the system
\begin{align}\label{eq:main2_nospace}
\begin{split}
\left\{\begin{array}{ll}
u_{a}=d\Delta_Nu
-\dfrac{puv}{u+v}+qv-\mu(a)u,&(x,a)\in\mathcal{O}_A,\\
v_{a}=d\Delta_Nu+
\dfrac{puv}{u+v}-qv-\mu(a)v,&(x,a)\in\mathcal{O}_A,\\
u(0) = \displaystyle\int_0^{A} \beta(a) u(a)da,\\
v(0) = \displaystyle\int_0^{A} \beta(a) v(a)da.\\
\end{array}\right.
\end{split}
\end{align}
One can check the uniqueness thanks to Hopf's lemma and strong maximum principle for the parabolic equation in a similar manner to Lemma A.\ref{comparison}. Thus, the system \eqref{eq:main1_nospace} and \eqref{eq:main2_nospace} admit the same unique positive solution.

\end{remark}


Next, let us analyze the system \eqref{eq:main-no_m_n}. Assume $u_0,~v_0\in \mathcal{X}_+$, $u_0\leq H \pi_0(a)$, $u_0\leq K \pi_0(a)$ for some $K,~H>0$ and $u_0,~v_0,~u_0+v_0\not\equiv 0$. From \eqref{bounded:above_nospace}, we get
\begin{align*}
\begin{split}
\left\{\begin{array}{ll}
w_t+w_a=d\Delta_Nw-\mu(a)w,\\
w(t,x,0) = \displaystyle\int_0^{A} \beta(a) w(t,x,a)da,\\
w(0,x,a) = w_0(x,a).
\end{array}\right.
\end{split}
\end{align*}
where $w:=u+v>0$ and $w_0:=u_0+v_0\geq 0$ on $\mathcal{O}_A$. Consider $A_2<A_3<A$. Based on similar arguments to \cite[Theorem 4.9]{langlais_large_1988}, it follows from $R=1$  that
\begin{align*}
w(t;u_0,v_0)\rightarrow w^*(0)\pi_0(a)\varphi_1(x) \text{ in $L^2(\mathcal{O}_{A_3})$ }
\end{align*} 
where
\begin{align*}
w^*(0):=\dfrac{\displaystyle\int_0^A\beta(b)\displaystyle\int_0^be^{-\displaystyle \int_b^e\mu(s)ds}\int_{\O}w_0(x,e)\varphi_1(x)dxdeda}{\displaystyle\int_0^A\beta(b)b\pi_0(b)db}>0.
\end{align*}

Note that since $\varphi_1=1$ is the only $L^2$-normalized positive eigenvector associated with the first eigenvalue $\lambda_1=0$ of $-d\Delta_N$, one can omit the eigenvector in this result.
Hence, the point-wise convergence on $\mathcal{O}_A$ is established. Since $w$ is bounded on $\mathcal{O}_A$, it follows that
\begin{align*}
w(t;u_0,v_0)\rightarrow w^*(0)\pi_0(a)\ \text{ strongly in $L^2(\mathcal{O}_A)$ }
\end{align*} 

Next, the second equation can be rewritten as follows
\begin{align*}
v_t+v_a + \mu(a)v= d\Delta_N v + (p-q)v - \dfrac{p}{w}v^2 \text{ for }t>0.
\end{align*}
Since $\mathcal{R}_0>1$, we shall prove that, $\text{as }t\rightarrow \infty$,
\begin{align*}
v(t;u_0,v_0)\rightarrow v^*(a)>0 \text{ pointwise on $\mathcal{O}_A$ }
\end{align*}
where $v^*$ is the unique positive solution of 
\begin{align}\label{Steadystate}
\left\{\begin{array}{llll}
(v^*)_a+\mu(a) v^* = (p-q)v^* - \dfrac{p}{w^*(0)\pi(a)}(v^*)^2,\\
v^*(0) = \displaystyle\int_0^A\beta(a)v^*(a)da.
\end{array}\right.
\end{align}
Similarly, one has $u(t)\rightarrow u^*:=w^*-v^*$.

We are in position to prove the stability of endemic equilibrium for system \eqref{eq:main-no_m_n}.

\begin{proof}[Proof of Theorem \ref{theo:stab_no_m_n}]
For $A_2<A_3<A$, the key point is to show $v_0=K Y_{\alpha}$  is a super solution of the following equation
\begin{align}\label{KPPw}
\left\{\begin{array}{llll}
v_t+v_a + \mu(a)v=d\Delta_N v + (p-q)v - \dfrac{p}{w}v^2,~t>0,~(x,a)\in \mathcal{O}_{A_3}\\
v(t,x,0)=\displaystyle \int_0^A\beta(a)v(t,x,a)da.
\end{array}\right.
\end{align}
 for $\alpha,~K>0$ large enough. One can check that
\begin{align*}
(v_0)_t+(v_0)_a+\mu(a)v_0&-(p-q)v_0+\dfrac{p}{w}v_0^2\\
&= \alpha KY_{\alpha}-KY_{\alpha}^2-(p-q)KY_{\alpha} + K^2\dfrac{p}{w}Y^2_{\alpha}\\
 &\geq KY_{\alpha}(\alpha -Y_{\alpha}-p+q)\geq 0
\end{align*}
provided that $\alpha>0$ large enough and $w(t;u_0,v_0)>0$ on $\mathcal{O}_{A_3}$, for any $t>0$. Thus, $v_0$ is a super solution of \eqref{KPPw}. Note that the choice of $K$ and $\alpha$ is independent with $w$. By the age-structured comparison principle (Proposition \ref{prof:comparoneeq}) for the domain $\mathcal{O}_{A_3}$, one can prove that
\begin{align*}
v(t;u_0,v_0)\leq v_0\text{ and } v(t+s;u_0,v_0)\leq v(t;u_0,v_0)
\end{align*}
for any $(x,a)\in \mathcal{O}_{A_3}$, $t,~s>0$. As the results, there exists a measurable function $h^*\geq 0$  such that
\begin{align*}
v(t;u_0,v_0) \rightarrow h^* \text{ as $t\rightarrow \infty$ pointwise on }\mathcal{O}_{A_3}.
\end{align*}
The rest is similar to Lemma \ref{omegasetlimit} and Theorem \ref{Theo:10}. We highlight an important detail in nonlinear term
\begin{align*}
III = -\int_{(-1,1)\times \mathcal{O}_{A_3}}\dfrac{p v^2(t_n+s;u_0,v_0)}{w(t_n+s;u_0,v_0)}\chi \rho(s)dsdxda
\end{align*}
It is known that
\begin{align*}
\dfrac{p v^2(t_n+s;u_0,v_0)}{w(t_n+s;u_0,v_0)} \rightarrow \dfrac{p (h^*)^2}{w^*(0)\pi(a)}
\end{align*}
and
\begin{align*}
\left|\dfrac{p v^2(t_n+s;u_0,v_0)}{w(t_n+s;u_0,v_0)}\right|=\dfrac{p v^2(t_n+s;u_0,v_0)}{u(t_n+s;u_0,v_0)+v(t_n+s;u_0,v_0)}\leq pv(t_n+s)\leq pM.
\end{align*}
for some $M>0$. Thanks to Lebesque dominated convergence theorem, one has
\begin{align*}
III \rightarrow -\int_{(-1,1)\times \mathcal{O}_{A_3}}\dfrac{p (h^*)^2}{w^*(0)\pi(a)}\chi \rho(s)dsdxda.
\end{align*}
It then follows that $h^*\equiv v^*\text{ is the unique solution of \eqref{Steadystate} and}$ 
\begin{align*}
v(t;u_0,v_0)\rightarrow v^* \text{ strongly in }L^2(\mathcal{O}_{A_3}).
\end{align*} 
Define $u^* =w^*(0)\pi(a)-v^*$. One can show that 
\begin{align*}
u(t;u_0,v_0)\rightarrow u^* \text{ strongly in }L^2(\mathcal{O}_{A_3}).
\end{align*}

Finally, it is easy to check that 
\begin{align*}
\begin{array}{llll}
&u(t;u_0,v_0) \rightarrow u^* &\text{ pointwise on $\mathcal{O}_{A}$,}\\
&v(t;u_0,v_0) \rightarrow v^* &\text{ pointwise on $\mathcal{O}_{A}$,}
\end{array}
\end{align*}
since we can choose a suitable $A_3$ for each $a$. It follows from Lebesgue dominated convergence theorem that
\begin{align*}
\begin{array}{llll}
&u(t;u_0,v_0)\rightarrow u^* &\text{ strongly in }L^2(\mathcal{O}_A),\\
&v(t;u_0,v_0)\rightarrow v^* &\text{ strongly in }L^2(\mathcal{O}_A),\\
\end{array}
\end{align*}
This concludes the proof.

\end{proof}

\appendix
\section{\bf Spectral Theory for Age-Structured Model with Neumann Boundary Conditions} \label{Appen A}

In this section, we present the spectral theory of age-structured model for Neumann boundary condition, analogues results with \cite{delgado_nonlinear_2006}. This work is essential since death rate $\mu$ blows up at $a=A$ in the sense of $L^1$. 

\begin{defa}\label{DefA.1}
Denote by $L^2_+(\O):=\{f\in L^2(\O):f(x)\geq 0 ~a.e,~x\in \O\}$. We say that $u\in L^2_+{(\O)}$ is \textit{quasi-interior point} of $L^2_+{(\O)}$, which is written as $u\gg 0$, if 
\begin{align*}
\int_{\O}u(x)f(x)dx>0~\text{ for all }f\in L^2_+{(\O)} \text{ and nontrivial}.
\end{align*}
\end{defa}
We say $H^{-1}(\O)=\mathcal{L}(H^1(\O),\R)$, the duality of $H^1(\O)$ and $\left<\cdot,\cdot\right>$ is scalar product for the duality of $H^{-1}(\O)$ and $H^1(\O)$.

\begin{defa}
For any $f\in L^2(\mathcal{O}_A)$, we say $u\in  L^2(0,A;H^1(\O))$, $u_a+qu\in L^2(0,A;H^{-1}(\O))$ is a weak solution if 
\begin{align*}
\left\{\begin{array}{llll}
\displaystyle\int_0^A\left<u_a+qu,v\right>da+\displaystyle\int_{\mathcal{\mathcal{O}_A}}\nabla u\cdot\nabla v dxda=\int_{\mathcal{O}_A}f(x,a)vdxda,\\
u(x,0) = \phi(x),&x\in \O.
\end{array}\right.
\end{align*}
for any $v\in L^2(0,A;H^1(\O))$.
\end{defa}

Let us start with the eigenvalue-eigenfuntion equation as follows
\begin{align}\label{eq:onevareigen}
\begin{split}
\left\{\begin{array}{llll}
\phi_{a}-d\Delta_N\phi+
\mathcal{M}(a)\phi=\lambda^*\phi,&(x,a)\in\mathcal{O}_A,\\
\dfrac{\partial \phi}{\partial \textbf{n}}=0,&x\in \partial \O,~a\in (0,A),\\
\phi(x,0) = \displaystyle\int_0^{A} \gamma(a) \phi(x,a)da,&x\in \O.
\end{array}\right.
\end{split}
\end{align}
where $d>0$ and 
\begin{enumerate}[label=(C\arabic*)] 
\item \label{cond:B1} $\mathcal{M}\in L^{\infty}(0,A_0)$ for any $0<A_0<A$ and
\begin{align*}
\int_0^A\mathcal{M}(a)da=\infty
\end{align*}
\item \label{cond:B2} $\gamma \in L^{\infty}(0,A)$, $\gamma\geq 0$ and there exists $a_0,~a_1\in (0,A)$ and $a_0< a_1$ such that $\gamma>0$ on $(a_0,a_1)$.

\end{enumerate}

\begin{lma}\label{Lemma A.1}
Suppose that \ref{cond:B1} and \ref{cond:B2} hold. Then, \eqref{eq:onevareigen} admits a positive solution on $\overline{\O}\times (0,A)$ if and only if
\begin{align*}
\lambda^* = \lambda_1+r_\mathcal{M}
\end{align*}
where $r_\mathcal{M}$ is the unique solution of 
\begin{align*}
\int_0^A\gamma(a)\exp\left(r_\mathcal{M} a - \int_0^a \mathcal{M}(s)ds\right)da=1
\end{align*}
and $\lambda_1= 0$ is the principal eigenvalue of $-d \Delta_N$. Furthermore, one has
\begin{align*}
\phi(x,a)=\exp\left(r_\mathcal{M} a - \int_0^a \mathcal{M}(s)ds\right)\varphi_1(x)
\end{align*}
where $\varphi_1$ is the eigenfunction associated with $\lambda_1$ and $\varphi_1>0$ on $\overline{\O}$.
\end{lma}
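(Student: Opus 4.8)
The plan is to prove the equivalence by separation of variables, exploiting that under Neumann conditions the spatial operator $-d\Delta_N$ has a complete orthonormal eigenbasis $\{\varphi_k\}_{k\ge 1}$ of $L^2(\O)$ with eigenvalues $0=\lambda_1<\lambda_2\le\cdots$, where the principal eigenvalue $\lambda_1=0$ is simple and its eigenfunction $\varphi_1$ is a positive constant. First I would record the scalar characteristic function
\[
\Theta(r):=\int_0^A \gamma(a)\exp\Big(ra-\int_0^a\mathcal{M}(s)\,ds\Big)\,da,
\]
and show, under \ref{cond:B1}--\ref{cond:B2}, that $\Theta$ is finite for every $r\in\R$, continuous, and strictly increasing, with $\Theta(r)\to 0$ as $r\to-\infty$ and $\Theta(r)\to\infty$ as $r\to+\infty$; consequently $\Theta(r)=1$ has a unique root, which is exactly $r_{\mathcal{M}}$. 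Strict monotonicity follows from differentiating under the integral sign, $\Theta'(r)=\int_0^A a\,\gamma(a)e^{ra-\int_0^a\mathcal{M}}\,da>0$, since $\gamma>0$ on $(a_0,a_1)$ with $a_0>0$; the limit at $+\infty$ uses the lower bound $e^{ra_0}$ on $(a_0,a_1)$, where $\int_0^a\mathcal{M}$ stays finite; and the limit at $-\infty$ follows by dominated convergence with dominating function $\gamma(a)e^{-\int_0^a\mathcal{M}(s)\,ds}$, which is integrable because the $L^1$-blow-up of $\mathcal{M}$ at $a=A$ forces $e^{-\int_0^a\mathcal{M}}\to 0$.

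For sufficiency, assuming $\lambda^*=\lambda_1+r_{\mathcal{M}}=r_{\mathcal{M}}$, I would directly verify that
\[
\phi(x,a)=\exp\Big(r_{\mathcal{M}} a-\int_0^a\mathcal{M}(s)\,ds\Big)\varphi_1(x)
\]
solves \eqref{eq:onevareigen}. The Neumann condition is automatic since $\varphi_1$ is constant; substituting the ansatz into the equation reduces it, after dividing by $\varphi_1$, to the scalar ODE $g'(a)=(r_{\mathcal{M}}-\mathcal{M}(a))g(a)$ solved by $g(a)=e^{r_{\mathcal{M}}a-\int_0^a\mathcal{M}}$ (using $\lambda_1=0$); and the renewal condition $\phi(x,0)=\int_0^A\gamma(a)\phi(x,a)\,da$ holds precisely because $\Theta(r_{\mathcal{M}})=1$. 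Positivity on $\overline{\O}\times(0,A)$ is immediate from $\varphi_1>0$ and the positivity of the exponential.

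For necessity, given any positive solution $\phi$ with parameter $\lambda^*$, I would expand $\phi(\cdot,a)=\sum_k c_k(a)\varphi_k$ in $L^2(\O)$, so that each mode decouples. Testing the (weak) equation against $\varphi_k$ and using $\langle -d\Delta_N\phi,\varphi_k\rangle=\lambda_k c_k$ (Neumann integration by parts) yields $c_k'(a)=(\lambda^*-\lambda_k-\mathcal{M}(a))c_k(a)$ together with $c_k(0)=\int_0^A\gamma(a)c_k(a)\,da$, whence $c_k(0)\big[\Theta(\lambda^*-\lambda_k)-1\big]=0$. For the principal mode $c_1(a)=\int_\O\phi(x,a)\varphi_1(x)\,dx>0$ on $(0,A)$ by positivity, and then $c_1(0)=\int_0^A\gamma c_1\,da>0$ (as $\gamma\ge 0$, $\gamma>0$ on $(a_0,a_1)$), forcing $\Theta(\lambda^*)=1$, i.e. $\lambda^*=r_{\mathcal{M}}=\lambda_1+r_{\mathcal{M}}$ by uniqueness of the root. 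Moreover, for every $k\ge 2$ one has $\lambda_k>\lambda_1$, so strict monotonicity gives $\Theta(\lambda^*-\lambda_k)<\Theta(\lambda^*)=1$, forcing $c_k(0)=0$ and hence $c_k\equiv 0$; thus $\phi=c_1(a)\varphi_1$ with $c_1(a)=c_1(0)e^{r_{\mathcal{M}}a-\int_0^a\mathcal{M}}$, which recovers the stated eigenfunction up to the positive scalar $c_1(0)$.

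The main obstacle I anticipate is the analytic control of $\Theta$ at the two endpoints: because $\mathcal{M}$ is only locally bounded and merely sign-indefinite with $\int_0^A\mathcal{M}=+\infty$, I must establish finiteness and the two limiting regimes carefully rather than treating $\mathcal{M}$ as a bounded nonnegative mortality. A secondary technical point is to justify the projection step at the level of weak solutions, or else to invoke the bootstrap regularity $\phi\in C(\overline{\mathcal{O}_A})\cap C^{2,1}(\mathcal{O}_A)$ available for such eigenfunctions, so that the scalar ODEs for the $c_k$ and the boundary evaluations $c_k(0)$ are both legitimate.
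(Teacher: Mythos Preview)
Your proposal is correct and follows essentially the same route as the paper: both arguments expand a positive solution in the Neumann eigenbasis $\{\varphi_k\}$ of $-d\Delta_N$, reduce each mode to the scalar characteristic equation $\Theta(\lambda^*-\lambda_k)=1$, and use uniqueness of the root $r_{\mathcal{M}}$ together with positivity to single out the principal mode. The only cosmetic difference is the order in which positivity enters: you invoke $c_1(0)>0$ up front to pin down $\lambda^*=r_{\mathcal{M}}$ and then kill the higher modes via strict monotonicity of $\Theta$, whereas the paper first argues that all nonzero modes must share the same $r_i=r_{\mathcal{M}}$ (hence the same $\lambda_i$), obtaining separability, and only then uses that $\varphi_1$ is the unique positive Neumann eigenfunction to identify the eigenspace. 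Your treatment of $\Theta$ (finiteness, strict monotonicity, limits) is more explicit than the paper's, which simply asserts uniqueness of the root; your anticipated obstacles are handled correctly and are not genuine gaps.
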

\begin{proof}
To start the proof, we adapt the technique in \cite[Theorem 3.5]{langlais_large_1988} to show all solutions of \eqref{eq:onevareigen} is separable. We denote by $(\lambda_i,\varphi_i)$ the eigenvalues and eigenfunctions of the Neumann problem of $-d\Delta_N $ in $\O$, meaning
\begin{align}\label{eq:NeumannL}
\begin{split}
\left\{\begin{array}{llll}
-d\Delta_N\varphi_i=\lambda_i\varphi_i,&x\in\O,\\
\dfrac{\partial \varphi_i}{\partial \textbf{n}}=0&x\in \partial \O.
\end{array}\right.
\end{split}
\end{align}
It is well-known that $\{u_i\}_{i\geq 1}$ is an orthonormal basis of $L^2(\O)$. Thus, one has
\begin{align*}
\phi(x,0)=\sum_{i\geq 1} v_i \varphi_i \text{ in }L^2(\O)
\end{align*}
for some sequence $\{v_i\}$ in $\R$. Then,
\begin{align*}
\phi(x,a)=\sum_{i\geq 1}v_i \exp\left(-\lambda_i a+\lambda^* a - \int_0^a \mathcal{M}(s)ds\right) \varphi_i
\end{align*}
By using the age-structured condition, one gets
\begin{align*}
\sum_{i\geq 1} v_i \varphi_i=\sum_{i\geq 1}v_i \varphi_i \int_0^A\gamma(a) \exp\left(r_i a - \int_0^a \mathcal{M}(s)ds\right)da 
\end{align*}
where $r_i:=\lambda^* - \lambda_i$. This leads to 
\begin{align*}
\text{ either }v_i=0 \text{ or }\int_0^A\gamma(a) \exp\left(r_i a - \int_0^a \mathcal{M}(s)ds\right)da=1
\end{align*}
Thanks to the fact that $\displaystyle\int_0^A\gamma(a) \exp\left(r_i a - \int_0^a \mathcal{M}(s)ds\right)da=1$ admits at most one solution $r_i=r_\mathcal{M}$, we conclude the separable property. 

Now, consider $\phi(x,a)=X(x)A(a)$. Then, one has
\begin{align*}
A(a)=p_0\exp\left(r a - \int_0^a \mathcal{M}(s)ds\right)
\end{align*}
and 
\begin{align*}
A(0)=\int_0^A\gamma(a) A(a) da \text{ if and only if }r=r_\mathcal{M}.
\end{align*}
On the other hand, one gets
\begin{align*}
\begin{split}
\left\{\begin{array}{llll}
-d\Delta_NX=(\lambda^* - r_\mathcal{M})X,&x\in\O,\\
\dfrac{\partial X}{\partial \textbf{n}}=0&x\in \partial \O.
\end{array}\right.
\end{split}
\end{align*}
Using the fact that $\varphi_1$ is only the positive eigenfunction on $\overline{\O}$, we conclude that $\lambda^* = \lambda_1 + r_\mathcal{M}=r_\mathcal{M}$.
\end{proof}

\begin{remark}
Note that $\lim\limits_{a\rightarrow A}\phi(x,a)=0$ uniformly with $x$ since $\displaystyle\int_0^A\mathcal{M}(a)da=\infty$. In fact, it is well-known that $\varphi_1$ is the constant function. Thus, the solution can be written as $\phi(a,x)=C\exp\left(r_\mathcal{M} a - \displaystyle\int_0^a \mathcal{M}(s)ds\right)$ for some constant $C>0$. Furthermore, if the solution is regular enough, we can recover the Neumann boundary condition with the appropriate test functions.
\end{remark}

\begin{lma}
Assume \ref{cond:B1} holds. Consider $u$ is a solution (in weak sense) of
\begin{align}\label{eq:paraNeu}
\begin{split}
\left\{\begin{array}{llll}
u_{a}-d\Delta_Nu+
\mathcal{M}(a)u=f(x,a),&(x,a)\in\mathcal{O}_A,\\
\dfrac{\partial u}{\partial \textbf{n}}=0&x\in \partial \O,~a\in (0,A),\\
u(x,0) = \phi(x),&x\in \O.
\end{array}\right.
\end{split}
\end{align}
and $f\geq 0$, $\phi\geq 0$ and some of inequalities are strict. Then, $u\gg 0$ on $\mathcal{O}_A$.

If $f=\phi=0$, then $u=0$.
\end{lma}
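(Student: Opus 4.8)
The plan is to eliminate the zeroth-order term $\mathcal{M}(a)u$ by an integrating factor, reducing \eqref{eq:paraNeu} to the pure Neumann heat equation, for which strong positivity of the semigroup and uniqueness of weak solutions are classical. Fix any $a_0\in(0,A)$; by \ref{cond:B1} the function $\mathcal{M}$ lies in $L^\infty(0,a_0)$, so $E(a):=\exp\!\big(\int_0^a\mathcal{M}(s)\,ds\big)$ satisfies $0<E_{\min}\le E(a)\le E_{\max}<\infty$ on $[0,a_0]$. Setting $v:=Eu$ and using $E'(a)=\mathcal{M}(a)E(a)$ together with the fact that $E$ depends only on $a$ (hence commutes with $\Delta_N$ and with the Neumann trace), a direct computation shows that $v$ is a weak solution on $\Omega\times(0,a_0)$ of
\begin{align*}
\left\{\begin{array}{llll}
v_a-d\Delta_N v=\tilde f:=Ef\ge 0,&(x,a)\in\Omega\times(0,a_0),\\
\dfrac{\partial v}{\partial \textbf{n}}=0,&x\in\partial\Omega,\\
v(x,0)=\phi(x),&x\in\Omega,
\end{array}\right.
\end{align*}
with $\tilde f\in L^2$ since $E$ is bounded on $[0,a_0]$. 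As this holds for every $a_0<A$ and $\mathcal{O}_A=\bigcup_{a_0<A}\Omega\times(0,a_0)$, it suffices to argue on each slab and let $a_0\uparrow A$; the blow-up $\int_0^A\mathcal{M}=\infty$ is thereby sidestepped. Since $E>0$, one has $u(\cdot,a)\gg0$ in $L^2_+(\Omega)$ if and only if $v(\cdot,a)\gg0$.

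Next I would invoke the variation-of-constants formula
\begin{align*}
v(\cdot,a)=T_{d\Delta_N}(a)\phi+\int_0^a T_{d\Delta_N}(a-s)\,\tilde f(\cdot,s)\,ds,
\end{align*}
where $T_{d\Delta_N}$ is the analytic semigroup generated by $-d\Delta_N$ with Neumann conditions. On the bounded connected domain $\Omega$ this semigroup is positivity improving: its kernel is strictly positive for $a>0$, so any $g\ge0$, $g\not\equiv0$ yields $T_{d\Delta_N}(a)g\gg0$ (a quasi-interior point of $L^2_+(\Omega)$ in the sense of Definition A.\ref{DefA.1}). Both terms in the formula are nonnegative. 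If $\phi\not\equiv0$, then $T_{d\Delta_N}(a)\phi\gg0$ for every $a>0$ and, the Duhamel integral being $\ge0$, we obtain $v(\cdot,a)\gg0$ and hence $u(\cdot,a)\gg0$ for all $a\in(0,A)$. If instead $\phi\equiv0$ while $f\not\equiv0$, pick $s_0$ with $\tilde f(\cdot,s_0)\not\equiv0$; then $T_{d\Delta_N}(a-s_0)\tilde f(\cdot,s_0)\gg0$ for $a>s_0$, and nonnegativity of the remaining integrand forces $v(\cdot,a)\gg0$ for such $a$. In either case positivity propagates forward in the age variable and we conclude $u\gg0$ on $\mathcal{O}_A$.

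For the final assertion take $f=\phi=0$. Then $\tilde f\equiv0$ and the Duhamel formula gives $v\equiv0$, whence $u\equiv0$; equivalently, one tests the weak formulation with $u$, integrates over $\Omega\times(0,a)$, and uses $\|\mathcal{M}\|_{L^\infty(0,a_0)}<\infty$ together with Gronwall's inequality to get $\|u(\cdot,a)\|_{L^2(\Omega)}=0$ on $[0,a_0]$, then bootstraps across $(0,A)$.

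The main obstacle I anticipate is not the algebra but the passage from the $L^2$/weak framework to the pointwise, up-to-the-boundary strong positivity $\gg0$. One must justify the variation-of-constants representation for weak solutions with merely $L^2$ data, upgrade regularity by parabolic smoothing so the strong maximum principle applies in the interior, and invoke the Hopf boundary-point lemma at the Neumann boundary $\partial\Omega$ to ensure strict positivity up to $\partial\Omega$ (so that $u(\cdot,a)$ is genuinely a quasi-interior point of $L^2_+(\Omega)$, not merely positive on compact interior subsets). The localization to slabs $a\le a_0<A$ is what keeps the weight $E$ controlled, and it entails no loss because the conclusion on every slab exhausts $\mathcal{O}_A$.
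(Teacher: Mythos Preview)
Your approach---eliminate $\mathcal{M}(a)$ by the integrating factor $E(a)=\exp\int_0^a\mathcal{M}$, localise to slabs $[0,a_0]$ to keep $E$ bounded, and then invoke Duhamel plus the positivity-improving property of the Neumann heat semigroup---is exactly the mechanism the paper intends (it defers to \cite[Lemma~6]{delgado_nonlinear_2006} and cites strong positivity of $T_{d\Delta_N}$ from \cite{zbMATH03937879}); you have simply made it explicit. The case $\phi\not\equiv 0$ and the uniqueness clause $f=\phi=0\Rightarrow u=0$ are handled correctly; your Gronwall/Duhamel argument for the latter is in fact cleaner than the paper's, which somewhat obscurely asserts that the solution must be a constant $C$ and then uses $\mathcal{M}(a)C=0$.

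There is, however, a real gap in the subcase $\phi\equiv 0$, $f\not\equiv 0$. You only obtain $v(\cdot,a)\gg 0$ for $a$ beyond the infimum $s_0$ of the $a$-support of $f$; indeed your own uniqueness argument, applied on the slab $\Omega\times(0,s_0)$ where $\tilde f\equiv 0$ and $v(\cdot,0)=0$, forces $v\equiv 0$ there. Hence $u$ cannot be a quasi-interior point of $L^2_+(\mathcal{O}_A)$, nor can $u(\cdot,a)\gg 0$ for $a<s_0$. Your sentence ``positivity propagates forward in the age variable and we conclude $u\gg 0$ on $\mathcal{O}_A$'' glosses over this: forward propagation does not retroactively create positivity at small ages. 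This is arguably a defect in the lemma's \emph{statement} rather than in your method---a concrete counterexample is $\phi=0$ with $f$ supported in $\{a>s_0\}$---and the paper's downstream uses (the map $z(\phi)$ in Step~2 of Theorem~A.\ref{TheoA.1}, where $f=0$ and $\phi\not\equiv 0$) only require the case you do prove. You should therefore either restrict the claim to $\phi\not\equiv 0$, or weaken the conclusion to $u(\cdot,a)\gg 0$ for all $a$ strictly larger than the essential infimum of the $a$-support of $f$, rather than assert the full conclusion on $\mathcal{O}_A$.
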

\begin{proof}
The proof is the analogous result to the one in \cite[Lemma 6]{delgado_nonlinear_2006} based on the fact that the semi-group of Neumann-Laplacian is strongly positive (see, for instance, \cite[Theorem C-III-3.2(b)]{zbMATH03937879}). We modify a detail as follows: The solution is a constant function $u=C$ in the case $f=\phi=0$. Then, 
\begin{align*}
\mathcal{M}(a)C=0,~\forall a\in(0,A).
\end{align*}
If $C\neq 0$, then $\mathcal{M}(a)=0,~\forall a\in(0,A)$. Contradiction with \ref{cond:B1}. Thus, $C=0$.
\end{proof}

Let us consider the following condition
\begin{enumerate}[label=(D\arabic*)]
\item \label{cond:C1}$\mu^*=\mu^*(x,a)\in L^{\infty}(\overline{\O}\times (0,A_0))$ for any $0<A_0<A$ and 
\begin{align*}
\mu^*_{\min}(a):=\inf\limits_{x\in \overline{\O}} \mu^*(x,a),~\mu^*_{\max}(a):=\sup\limits_{x\in \overline{\O}} \mu^*(x,a)
\end{align*}
satisfy 
\begin{align*} 
\int_0^a \mu_{\max}(k)dk <\infty,~\forall a\in (0,A),~\int_0^A \mu_{\min}(k)dk = \infty.
\end{align*}
\item  \label{cond:C2}$\beta^*=\beta^*(x,a) \in W^{1, \infty}(\overline{\O}\times(0,A))$, $\beta^*\geq 0$ and there exists $a_0,a_1 \in (0,A)$ and $a_0< a_1$ such that 
\begin{align*}
\beta^*_{\min}:=\inf_{x\in \overline{\O}}\beta^*(x,\cdot)>0 \text{ on }(a_0,a_1) \text{ and }\supp(\beta^*)\subset \overline{\O}\times (a_0,A).
\end{align*}
\end{enumerate}

Next, let us state the main results of this Appendix A. 
\begin{align}\label{eq:onevareigenage}
\begin{split}
\left\{\begin{array}{llll}
u_{a}-d\Delta_Nu+
\mu^*(x,a)u=\lambda^* u,&(x,a)\in\mathcal{O}_A,\\
\dfrac{\partial u}{\partial \textbf{n}}=0,&x\in \partial \O,~a\in (0,A),\\
u(x,0) = \displaystyle\int_0^{A} \beta^*(x,a) u(x,a)da,&x\in \O.
\end{array}\right.
\end{split}
\end{align}
We call an eigenvalue $\lambda^*$ is \textit{principal} if there exists an eigenfunction $\phi>0$ in $\overline{\O}\times (0,A)$.

Now, we state the main result
\begin{tha}\label{TheoA.1}
Assume \ref{cond:C1} and \ref{cond:C2} hold. Then, there exists a unique principal eigenvalue of \eqref{eq:onevareigenage}, denoted by $\lambda^*_{d,\mu^*}$. Moreover, it is simple and the only eigenvalue having a positive eigenfunction. The positive eigenfunctions can be taken bounded. Furthermore, for any other eigenvalue $\lambda^*$ of \eqref{eq:onevareigenage}, it holds that
\begin{align*}
\text{Re}(\lambda^*)> \lambda^*_{d,\mu^*}.
\end{align*}
Finally, the map $\mu^*\mapsto \lambda^*_{d,\mu^*}$ is increasing.
\end{tha}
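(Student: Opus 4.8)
The plan is to characterize the principal eigenvalue through a one-parameter family of positive compact operators on $\mathcal{Y}=C(\overline{\O})$ and to apply the Krein--Rutman theorem, following the scheme of Delgado, Molina-Becerra and Su\'arez \cite{delgado_nonlinear_2006}. For real $\lambda$, let $V(a):\mathcal{Y}\to\mathcal{Y}$, $a\in[0,A)$, be the evolution family solving the Neumann problem $v_a=d\Delta_N v-\mu^*(x,a)v$, $\partial v/\partial\mathbf{n}=0$, $v(\cdot,0)=\varphi$. Writing $u=e^{\lambda a}v$ reduces the eigenvalue equation with growth term $\lambda^* u$ to this $\lambda$-independent family, so any eigenfunction $u$ of \eqref{eq:onevareigenage} satisfies $u(x,a)=e^{\lambda^* a}(V(a)u(\cdot,0))(x)$. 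Substituting into the age-boundary condition turns \eqref{eq:onevareigenage} into the fixed-point relation $\mathcal{Q}(\lambda^*)\Phi=\Phi$ with $\Phi=u(\cdot,0)$, where
\begin{align*}
\mathcal{Q}(\lambda)\varphi(x):=\int_0^A \beta^*(x,a)\,e^{\lambda a}\,(V(a)\varphi)(x)\,da.
\end{align*}

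First I would establish the basic properties of $\mathcal{Q}(\lambda)$. Using \ref{cond:C1} (the $L^1$-blow-up of $\mu^*$ at $a=A$, which forces $V(a)\to 0$ as $a\to A$) together with the parabolic smoothing and the strong positivity of the Neumann heat evolution family --- guaranteed by the strong positivity of $T_{d\Delta_N}$ invoked in Section \ref{sec:semi-group} and by the positivity Lemma preceding this theorem --- the integral defining $\mathcal{Q}(\lambda)$ converges in $\mathcal{L}(\mathcal{Y})$, is compact, and maps $\mathcal{Y}_+\setminus\{0\}$ into $\mathrm{Int}(\mathcal{Y}_+)$; here strong positivity uses \ref{cond:C2}, namely that $\beta^*>0$ on $(a_0,a_1)$ with $a_0>0$, so the strongly positive operators $V(a)$, $a\in(a_0,a_1)$, contribute. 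The Krein--Rutman theorem then gives that $r(\lambda):=r(\mathcal{Q}(\lambda))$ is a simple eigenvalue of $\mathcal{Q}(\lambda)$ with positive eigenvector and is the only eigenvalue admitting a positive eigenvector.

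Next I would analyse the scalar function $\lambda\mapsto r(\lambda)$. Since $\lambda\mapsto\mathcal{Q}(\lambda)$ is continuous and strictly increasing on the cone (as $e^{\lambda a}$ is), strong positivity makes $r(\lambda)$ continuous and strictly increasing. Because $\beta^*$ is supported in $a\geq a_0>0$, the factor $e^{\lambda a}$ forces $r(\lambda)\to 0$ as $\lambda\to-\infty$ and $r(\lambda)\to+\infty$ as $\lambda\to+\infty$, so there is a unique real $\lambda^*_{d,\mu^*}$ with $r(\lambda^*_{d,\mu^*})=1$. By the fixed-point reduction, $\lambda$ is a principal eigenvalue of \eqref{eq:onevareigenage} if and only if $1$ is an eigenvalue of $\mathcal{Q}(\lambda)$ with positive eigenvector, i.e. if and only if $r(\lambda)=1$; this yields existence, uniqueness and simplicity of the principal eigenvalue and that it is the only eigenvalue with a positive eigenfunction. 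The eigenvector $\Phi$ lies in $\mathrm{Int}(\mathcal{Y}_+)$, hence is bounded, and $u(x,a)=e^{\lambda^*_{d,\mu^*}a}(V(a)\Phi)(x)$ is bounded because $V(a)\Phi\to 0$ as $a\to A$.

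For the dominance of the real part and the monotonicity in $\mu^*$ I would argue by comparison. For a general (possibly complex) eigenvalue $\lambda=\sigma+i\tau$ with eigenfunction $u$, the same reduction gives $\mathcal{Q}(\lambda)\Phi=\Phi$; taking moduli and using $|V(a)\Phi|\leq V(a)|\Phi|$ and $|e^{\lambda a}|=e^{\sigma a}$ yields $|\Phi|\leq\mathcal{Q}(\sigma)|\Phi|$, whence $r(\sigma)\geq 1=r(\lambda^*_{d,\mu^*})$ and so $\sigma\geq\lambda^*_{d,\mu^*}$ by strict monotonicity. The delicate point --- the step I expect to be the main obstacle --- is upgrading this to the strict inequality $\mathrm{Re}(\lambda)>\lambda^*_{d,\mu^*}$ for $\lambda\neq\lambda^*_{d,\mu^*}$: if $\sigma=\lambda^*_{d,\mu^*}$ then $|\Phi|$ is forced to be the principal eigenvector and the modulus inequality to be an equality, and a standard strong-positivity/phase argument then forces $\tau=0$, i.e. $\lambda=\lambda^*_{d,\mu^*}$. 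Finally, if $\mu_1^*\leq\mu_2^*$ then the corresponding families satisfy $V_2(a)\leq V_1(a)$, hence $\mathcal{Q}_2(\lambda)\leq\mathcal{Q}_1(\lambda)$ and $r_2(\lambda)\leq r_1(\lambda)$; evaluating at $\lambda=\lambda^*_{d,\mu_1^*}$ gives $r_2(\lambda^*_{d,\mu_1^*})\leq 1$, and strict monotonicity of $r_2$ forces $\lambda^*_{d,\mu_2^*}\geq\lambda^*_{d,\mu_1^*}$, with strictness when $\mu_1^*<\mu_2^*$ on a set of positive measure by strong positivity. This shows $\mu^*\mapsto\lambda^*_{d,\mu^*}$ is increasing and completes the plan.
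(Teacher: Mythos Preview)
Your proposal is correct and follows essentially the same Delgado--Molina-Becerra--Su\'arez scheme as the paper: both reduce \eqref{eq:onevareigenage} to a one-parameter family $\mathcal{Q}(\lambda)$ (the paper's $\mathcal{B}_{\lambda^*}$, set up on $L^2(\Omega)$ rather than your $C(\overline{\Omega})$), apply Krein--Rutman, and find the unique $\lambda$ with spectral radius $1$ via monotonicity and continuity in $\lambda$. The only execution differences are that the paper pins down this $\lambda$ by sandwiching $r(\mathcal{B}_\lambda)$ between the explicitly solvable $\mu^*_{\min}$- and $\mu^*_{\max}$-problems (Lemma~A.\ref{Lemma A.1} and Step~4) rather than your direct $r(\lambda)\to 0,\infty$ limits, and that your modulus/phase argument for $\mathrm{Re}(\lambda^*)>\lambda^*_{d,\mu^*}$ is more explicit than the paper's bare invocation of Krein--Rutman.
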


\begin{proof}
 There are several steps to prove this result as follows

\textbf{Step 1:} Let us consider the result for the general case
\begin{align}\label{eq:paraNeu}
\begin{split}
\left\{\begin{array}{llll}
u_{a}-d\Delta_Nu+
\mu^*(x,a) u=f(x,a),&(x,a)\in\mathcal{O}_A,\\
\dfrac{\partial u}{\partial \textbf{n}}=0&x\in \partial \O,~a\in (0,A),\\
u(x,0) = \phi(x),&x\in \O.
\end{array}\right.
\end{split}
\end{align}
where $f\in L^2(\mathcal{O}_A)$, $f\geq 0$, $\phi \in L^2(\O)$, $\phi \geq 0$, $\mu^*$ satisfies \ref{cond:C1}. Then, there exists a unique weak solution $u$ of \eqref{eq:paraNeu}. Moreover, for any $0<A_0<A$, one has $u\in C([0,A_0];L^2(\O))$. Furthermore, we obtain the following comparison principles
\begin{enumerate}[label=(\roman*)]
\item If $f\geq 0$ and $\phi\geq 0$, then $u\geq 0$. If some of the inequalities are strict, we deduce that $u\gg 0$.

\item If $f_1\geq f_2\geq  0$ and $\phi_1\geq \phi_2\geq 0$ and $\mu^*_1\leq \mu^*_2$ in their respective domains, then $u_1\geq u_2$, where $u_i$, $i=1,2$ is the solution of \eqref{eq:paraNeu} with $f=f_i$, $\phi=\phi_i$ and $q=q_i$. In addition, one can have $u_1>u_2$ if one of the inequalities is strict.
\end{enumerate}

The proof of this step is similar to the one in \cite[Lemma 7]{delgado_nonlinear_2006} with some modifications as follows
\begin{enumerate}
\item We consider the weak convergence of $w_n$  in the functional space $L^2(0,A;H_0^{1}(\O))$ instead of $L^2(0,A;H^{1}(\O))$.
\item We use $\phi \in C_c^{\infty}(\O\times (0,A))$ to obtain the first equation and then use $\phi \in C^{\infty}_c(\overline{\O}\times (0,A))$ to recover the Neumann boundary condition.
\item In the uniqueness, with $u_1$ and $u_2$ are two distinct solutions and $w=u_1-u_2$. Then, $w=C$ for some constants $C$. Using the fact that $\displaystyle \int_0^A \mu^*(x,a)da=\infty$, one has $C=0$.
\end{enumerate}

\textbf{Step 2:} Before investigating the main theorem, some preliminary results are needed. For any $\phi\in L^2(\O)$, we consider $z(\phi)$ is the solution of the following equation
\begin{align}\label{eq:oneNeumannmu}
\begin{split}
\left\{\begin{array}{llll}
u_{a}-d\Delta_Nu+
\mu^*(x,a)u=0,&(x,a)\in\mathcal{O}_A,\\
\dfrac{\partial u}{\partial \textbf{n}}=0,&x\in \partial \O,~a\in (0,A),\\
u(x,0) = \phi(x),&x\in \O.
\end{array}\right.
\end{split}
\end{align}
and define $\mathcal{B}_{\lambda^*}:L^2(\O)\rightarrow L^2(\O)$ by
\begin{align*}
\mathcal{B}_{\lambda^*}(\phi)=\int_0^A\beta^*(x,a)z(\phi)(x,a)e^{\lambda^* a}da
\end{align*}
This operator is well-defined thanks to \text{Step 1} and $\beta^*\in L^{\infty}$. The following statements hold
\begin{enumerate}[label=(\roman*)]
\item The operator $\mathcal{B}_{\lambda^*}$ is compact and positive operator.
\item For $\phi\in L^2(\O)$, one has
\begin{align*}
\mathcal{A}_{\lambda^*}(\phi)\leq \mathcal{B}_{\lambda^*}(\phi)\leq \mathcal{C}_{\lambda^*}(\phi)
\end{align*}
where
\begin{align*}
&\mathcal{A}_{\lambda^*}(\phi):=\int_0^A\beta_{\min}^*(x,a)w(\phi)(x,a)da,\\
&\mathcal{C}_{\lambda^*}(\phi):=\int_0^A\beta_{\max}^*(x,a)y(\phi)(x,a)da,
\end{align*}
and $w(\phi)$ (resp. $y(\phi)$) is the solution of \eqref{eq:paraNeu} for the case $f=0$, $m=\mu^*_{\min}$ (resp. $m=\mu^*_{\max}$) with the initial condition $\phi$. 
\item $\mathcal{B}_{\lambda^*}$ is an irreducible operator.
\item If $\phi$ is a fixed point of $\mathcal{B}_{\lambda^*}$, then $\lambda^*$ is an eigenvalue of \eqref{eq:onevareigenage}.

\item If $(\lambda^*,\phi)$ is a pair of eigenvalue-eigenfunction of \eqref{eq:onevareigenage}. Then, $\phi(x):=u(x,0)$ is a fixed point of $\mathcal{B}_{\lambda^*}$.
\end{enumerate}

The proof of this step is the analogous result to the one in \cite[Lemma 9]{delgado_nonlinear_2006} with the help of strong positivity to obtain irreducible property. We highlight an important detail in the proof as follows: Due to the properties of the mapping $\phi\mapsto z(\phi)$ and the semi-group $T_{d\Delta_N}$ generated by $d\Delta_N$, it follows from \ref{cond:C2} that $\mathcal{B}_{\lambda^*}$ is compact. We end the proof.

\phantom{1}

Let us denote by $r(\mathcal{B}_{\lambda^*})$ the spectral radius of $\mathcal{B}_{\lambda^*}$. Then, similar to arguments in \cite[pp. 373]{delgado_nonlinear_2006}, one has $r(\mathcal{B}_{\lambda^*})>0$. By the Krein–Rutman theorem, $r(\mathcal{B}_{\lambda^*})$ is an algebraically simple eigenvalue with a quasi-interior eigenfunction and is, therefore, the only eigenvalue associated with a positive eigenfunction.

\textbf{Step 3:} $\lambda^*_2$ is a principal eigenvalue of \eqref{eq:onevareigenage} if and only if $r(\mathcal{B}_{\lambda^*_2})=1$. The proof is similar to that of \cite[Corollary 10]{delgado_nonlinear_2006}. Thus, we omit the details here.

Thanks to this result, an important result is established as the first step to prove the existence of the principal eigenvalue of \eqref{eq:onevareigenage}.

\textbf{Step 4:} Assume \ref{cond:B1} and \ref{cond:B2} hold. Then, 
\begin{align*}
r(\mathcal{D}_{\lambda_1+r_\mathcal{M}})=1,
\end{align*}
where 
\begin{align*}
\mathcal{D}_{\lambda^*}(\phi)=\int_0^A\gamma(a)u(\phi)(x,a)e^{\lambda^* a}da,
\end{align*}
and $u(\phi)$ is the solution of \eqref{eq:paraNeu} for the case $f=0$.

The technique for the proof can be found in \cite[Proposition 11]{delgado_nonlinear_2006}. Thus, we omit it.

\textbf{Step 5:} Lastly, the proof is a similar manner to the one in \cite[Theorem 8]{delgado_nonlinear_2006}. We highlight some important details as follows
\begin{enumerate}
\item It is easy to check that if $\lambda^*_1<\lambda^*_2$, then $\mathcal{B}_{\lambda^*_1}<\mathcal{B}_{\lambda^*_2}$. As the results, $r(\mathcal{B}_{\lambda^*_1})<r(\mathcal{B}_{\lambda^*_2})$ (see \cite[Theorem 3.2(v)]{amann_fixed_1976}).
\item Let $\phi_0\gg 0$ be the a principal eigenfunction associated with $r(\mathcal{B}_{\lambda^*})$, meaning
\begin{align*}
\mathcal{B}_{\lambda^*}(\phi_0)=r(\mathcal{B}_{\lambda^*})\phi_0. 
\end{align*}
and, for any $\varepsilon>0$, one has 
\begin{align*}
\mathcal{B}_{\lambda^*+\varepsilon}(\phi_0)\leq  e^{\varepsilon A}\mathcal{B}_{\lambda^*}(\phi_0)=r(\mathcal{B}_{\lambda^*})\phi_0 e^{\varepsilon A}
\end{align*}
This is essential to prove the continuous of $\lambda^* \mapsto r(\mathcal{B}_{\lambda^*})$.
\item Applying \text{Step 4} for $\mathcal{D}_{\lambda^*}=A_{\lambda^*}$ and $\mathcal{M}=\mu^*_{\max}$, one has 
\begin{align*}
r\left(\mathcal{A}_{\lambda_1+r_{\mu^*_{\max}}}\right)=1.
\end{align*}
Similarly, $r\left(\mathcal{C}_{\lambda^*_1+r_{\mu^*_{\min}}}\right)=1$. Thus, we can check that 
\begin{align*}
r\left(\mathcal{B}_{\lambda_1+r_{\mu^*_{\min}}}\right)\leq r\left(\mathcal{C}_{\lambda_1+r_{\mu^*_{\min}}}\right)=1 = r\left(\mathcal{A}_{\lambda_1+r_{\mu^*_{\max}}}\right)\leq r\left(\mathcal{B}_{\lambda_1+r_{\mu^*_{\max}}}\right) 
\end{align*}
where $\lambda_1=0$ is the principal eigenvalue of $-d\Delta_N$, $r_{m}$ is in Lemma \ref{Lemma A.1}. Then, there exists $\lambda^*_{d,\mu^*}\in\left(r_{\mu^*_{\min}},r_{\mu^*_{\max}}\right)$ such that $r\left(\mathcal{B}_{\lambda^*_{d,\mu^*}}\right)=1$. By applying the Krein–Rutman theorem,  $\lambda^*_{d,\mu^*}$ is algebraically simple and $Re(\lambda^*)>\lambda^*_{d,\mu^*}$ for any other eigenvalue $\lambda^*$ of \eqref{eq:onevareigenage}.

\item Assume $q_1<q_2$. Prove that $\lambda^*_{d,q_1}<\lambda^*_{d,q_2}$. Suppose otherwise, $\lambda^*_{d,q_1}\geq \lambda^*_{d,q_2}$. Applying \text{Step 1}(ii), one has $z_{q_1}(\phi)>z_{q_2}(\phi)$, where $ z_{q_i}(\phi)$, $i=1,2$ is the solution of \eqref{eq:oneNeumannmu} with initial condition $\phi$. Then,
\begin{align*}
\mathcal{B}_{\lambda^*_{d,q_1}}^{q_1}(\phi)>\mathcal{B}_{\lambda^*_{d,q_1}}^{q_2}(\phi)\geq\mathcal{B}_{\lambda^*_{d,q_2}}^{q_2}(\phi) 
\end{align*}
where $\mathcal{B}_{\lambda^*}^{q_i}$, $i=1,2$ corresponding with the case $q=q_i$, $i=1,2$ in \eqref{eq:oneNeumannmu}.
On the other hand, it is known that
\begin{align*}
r\left(\mathcal{B}_{\lambda^*_{d,q_2}}^{q_1}\right)=r\left(\mathcal{B}_{\lambda^*_{d,q_2}}^{q_2}\right)=1,
\end{align*}
which contradicts \cite[Theorem 3.2(v)]{amann_fixed_1976}. Hence, it follows that 
\begin{align*}
\lambda^*_1(q_1)<\lambda^*_1(q_2).
\end{align*}

\end{enumerate}
\end{proof}

\begin{remark} \label{lim0}
We can check that if $\phi$ is eigenfunction corresponding to $\lambda^*_1$, then $\lim\limits_{a\rightarrow A}\phi(x,a)=0$ uniformly with $x\in \overline{\O}$. 
\end{remark}

Let us discuss about the sign of the principal eigenvalue $\lambda^*_{d,\mu^*}$ in the case $\gamma=\gamma(a)$ first. Consider $\mu^*=0$, let $\phi$ be the positive eigenvalue of $\lambda^*_{d,0}$, then, one has
\begin{align*}
\begin{split}
\left\{\begin{array}{llll}
\phi_{a}=\lambda^*_{d,0} \phi,&~a\in (0,A),\\
\phi(0) = \displaystyle\int_0^{A} \gamma(a) \phi(a)da,
\end{array}\right.
\end{split}
\end{align*}
since $\phi$ independent with $x$. Simple calculation yields that
\begin{align*}
\phi(a)=e^{\lambda^*_{d,0}a}\text{ and } \int_0^A\gamma(a)e^{\lambda^*_{d,0}a}da =1
\end{align*}
We distinguish three cases: $\|\gamma\|_{L^1(0,A)}=1$, $\|\gamma\|_{L^1(0,A)}<1$ and $\|\gamma\|_{L^1(0,A)}>1$.

\textbf{Case 1}: $\|\gamma\|_{L^1(0,A)}=\displaystyle\int_0^A\gamma(a)da=1$. Then, one can prove that $\lambda^*_{d,0}=0$ since the function
\begin{align*}
\mathcal{F}:\lambda\rightarrow \int_0^A\gamma(a)e^{\lambda_0 a}da
\end{align*}
is increasing and $\mathcal{F}(0)=1$. Then, with $\mu>0$ (resp. $\mu<0$), we have that $\lambda_{d,\mu}^*>(\text{resp. }<)~\lambda^*_{d,0}=0$.

\textbf{Case 2}: $\|\gamma\|_{L^1(0,A)}=\displaystyle\int_0^A\gamma(a)da<1$. One can show there exists $\lambda_0>0$ such that
\begin{align*}
\int_0^A\gamma(a)e^{\lambda_0 a}da=1
\end{align*}
Now, let us consider
\begin{align*}
\begin{split}
\left\{\begin{array}{llll}
\phi_{a}-\lambda_0\phi=\lambda^*_{d,\lambda_0} \phi,&~a\in (0,A),\\
\phi(0) = \displaystyle\int_0^{A} \gamma(a) \phi(a)da,
\end{array}\right.
\end{split}
\end{align*}
leading to 
\begin{align*}
\int_0^A\gamma(a)e^{\left(\lambda^*_{d,\lambda_0}+\lambda_0\right) a}da=1
\end{align*}
Thus, $\lambda^*_{d,\lambda_0}=0$. With $\mu>\lambda_0$ (resp. $\mu<\lambda_0$), we have that $\lambda_{d,\mu}^*>(\text{resp. }<)~\lambda^*_{d,\lambda_0}=0$.

\textbf{Case 3}: $\|\gamma\|_{L^1(0,A)}=\displaystyle\int_0^A\gamma(a)da<1$. Similar to \textbf{Case 2}.

The general case $\beta=\beta(x,a)$ is similar but requires more advanced techniques, which we omit here. Consequently, the sign of the principal eigenvalue can be positive, negative, or zero, depending on $\mu^*$.


Next, we state the sub-supersolution method for the age-structured model. Let $g:\overline{\mathcal{O}_A}\times \R\rightarrow \R$ be a measurable function.
\begin{defa}
We say that a function $u\in  L^2(0,A;H^1(\O))$, $u_a+qu\in L^2(0,A;H^{-1}(\O))$ is a solution if $g(x,a,u)\in L^2(\mathcal{O}_A)$ and
\begin{align}\label{eq:solution}
\left\{\begin{array}{llll}
\displaystyle\int_0^A\left<u_a+qu,v\right>da+\displaystyle\int_{\mathcal{\mathcal{O}_A}}\nabla u\cdot\nabla v dxda= \int_{\mathcal{O}_A}g(x,a,u)vdxda,\\
u(x,0) = \displaystyle\int_0^A\beta^*(x,a)u(x,a)dxda,&x\in \O.
\end{array}\right.
\end{align}
for any $v\in L^2(0,A;H^1(\O))$.
\end{defa}

\begin{defa}
We say that a function $\overline{u}\in  L^2(0,A;H^1(\O))$ is a super-solution  if $\overline{u}_a+q\overline{u}\in L^2(0,A;H^{-1}(\O))$, $g(x,a,\overline{u})\in L^2(\mathcal{O}_A)$ and
\begin{align}\label{eq:solution}
\left\{\begin{array}{llll}
\displaystyle\int_0^A\left<\overline{u}_a+q\overline{u},v\right>da+\displaystyle\int_{\mathcal{\mathcal{O}_A}}\nabla \overline{u}\cdot\nabla v dxda\geq \int_{\mathcal{O}_A}g(x,a,\overline{u})vdxda,\\
\overline{u}(x,0) \geq  \displaystyle\int_0^A\beta^*(x,a)\overline{u}(x,a)dxda,&x\in \O.
\end{array}\right.
\end{align}
for any $v\in L^2(0,A;H^1(\O)),~v\geq 0$.
\end{defa}
We also define the sub-solution by interchanging the inequalities.

\begin{tha}
Assume \ref{cond:C1} and \ref{cond:C2} hold and for any $M>0$, there exists $L>0$ such that
\begin{align*}
|g(x,a,s_1)-g(x,a,s_2)|\leq L|s_1-s_2|,~\forall (x,a)\in \overline{\mathcal{O}_A},~s_1,s_2\in (-M,M). 
\end{align*}
Suppose further there exists a pair of sub- and super- solution $(\underline{u},\overline{u})$, $0\leq \underline{u}\leq \overline{u}$ and $\overline{u}\in L^{\infty}(\mathcal{O}_A)$. Then, \eqref{eq:solution} admits a solution.

\end{tha}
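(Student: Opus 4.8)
The plan is to run the classical monotone iteration scheme, adapted to the nonlocal renewal structure of the age problem, with the positive compact resolvent of Section~\ref{sec 2.1} playing the role of the linear solver. First I would fix the shift: set $M:=\|\overline{u}\|_{L^\infty(\mathcal{O}_A)}+1$, let $L=L(M)$ be the Lipschitz constant supplied by the hypothesis on $(-M,M)$, and choose $\lambda\ge L$. Then for $0\le s_2\le s_1\le M$ one has $\bigl(g(x,a,s_1)+\lambda s_1\bigr)-\bigl(g(x,a,s_2)+\lambda s_2\bigr)\ge(\lambda-L)(s_1-s_2)\ge 0$, so the shifted nonlinearity $s\mapsto g(x,a,s)+\lambda s$ is non-decreasing on $[0,M]$. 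Let $\mathcal{T}_\lambda:=(\mathcal{B}_{1,q}+\lambda I)^{-1}$ denote the resolvent of the linear operator $u\mapsto u_a+qu-\Delta u$ carrying the Neumann condition and the renewal condition $u(x,0)=\int_0^A\beta^*u\,da$ in its domain (the operator of \eqref{eq:solution}, i.e.\ $\mathcal{B}_{d,\mu^*}$ with $d=1$, $\mu^*=q$); by Section~\ref{sec 2.1} this resolvent exists for $\lambda$ large, is compact, and is order preserving.

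The iteration is $u_{n+1}:=\mathcal{T}_\lambda\bigl[g(\cdot,\cdot,u_n)+\lambda u_n\bigr]$, run once from $\overline{u}_0:=\overline{u}$ and once from $\underline{u}_0:=\underline{u}$. The first step is to anchor the monotonicity. Adding $\lambda\overline{u}$ to the super-solution inequality shows $\overline{u}$ is a super-solution of the linear problem solved by $\overline{u}_1$, so $z:=\overline{u}-\overline{u}_1$ satisfies $(\mathcal{B}_{1,q}+\lambda I)z\ge 0$ weakly together with the renewal inequality $z(x,0)\ge\int_0^A\beta^*(x,a)z(x,a)\,da$ (using that $\overline{u}_1$ meets the renewal condition with equality). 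The comparison principle for the renewal problem then forces $z\ge 0$, i.e.\ $\overline{u}_1\le\overline{u}$, and symmetrically $\underline{u}_1\ge\underline{u}$. Thereafter the monotonicity of $g+\lambda\,\mathrm{id}$ on $[0,M]$ combined with the order-preserving property of $\mathcal{T}_\lambda$ propagates the orderings by induction, yielding $\underline{u}=\underline{u}_0\le\underline{u}_1\le\cdots\le\underline{u}_n\le\overline{u}_n\le\cdots\le\overline{u}_1\le\overline{u}_0=\overline{u}$.

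Both sequences are monotone and confined to the order interval $[\underline{u},\overline{u}]$, hence uniformly bounded in $L^\infty(\mathcal{O}_A)$ and convergent pointwise a.e.\ to limits $\underline{u}_\infty\le\overline{u}_\infty$. To close the argument I would pass to the limit in $u_{n+1}=\mathcal{T}_\lambda[g(\cdot,\cdot,u_n)+\lambda u_n]$: the uniform $L^\infty$ bound together with the Lipschitz continuity of $g$ gives, by dominated convergence, strong $L^2(\mathcal{O}_A)$ convergence of the right-hand sides, while compactness of $\mathcal{T}_\lambda$ upgrades the convergence of the iterates to strong convergence in the energy space $L^2(0,A;H^1(\O))$ with $u_a+qu\in L^2(0,A;H^{-1}(\O))$, the latter regularity following from the uniform linear estimates of Step~1 of Theorem~A.\ref{TheoA.1}. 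The limit thus satisfies $\overline{u}_\infty=\mathcal{T}_\lambda[g(\cdot,\cdot,\overline{u}_\infty)+\lambda\overline{u}_\infty]$, which is precisely the weak formulation \eqref{eq:solution}; the same applies to $\underline{u}_\infty$, and either one furnishes the desired solution.

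The main obstacle is the nonlocal renewal boundary condition, which couples the age-zero datum to the solution itself and therefore lies outside the scope of the classical sub--supersolution comparison, where the age-zero data is prescribed as in Step~1 of Theorem~A.\ref{TheoA.1}. The delicate point is to establish the comparison principle for super- and sub-solutions that satisfy the renewal condition as an \emph{inequality} rather than an equality; I expect to obtain this by combining the characteristic-line representation with the positivity of $\mathcal{T}_\lambda$ from Section~\ref{sec 2.1}, exactly as the order-preserving property is used both to anchor and to propagate the iteration. A secondary technical point is verifying that the limiting functions retain the regularity $u_a+qu\in L^2(0,A;H^{-1}(\O))$, which again rests on the uniform linear estimates established in Theorem~A.\ref{TheoA.1}.
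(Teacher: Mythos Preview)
Your approach—monotone iteration via a shifted nonlinearity and a positive linear solver—is precisely the sub--super solution scheme of Delgado--Molina-Becerra--Su\'arez that the paper invokes, so the overall strategy matches. The difference is in how you organise the linear step: you take $\mathcal{T}_\lambda$ to be the resolvent of the full age operator with the renewal condition $u(\cdot,0)=\int_0^A\beta^*u\,da$ built into its domain, which then forces you, at the anchoring step $\overline u_1\le\overline u$, to compare across a renewal \emph{inequality}. That comparison can be established for $\lambda$ large (essentially via the contraction of the map $\phi\mapsto\int_0^A\beta^* e^{-\lambda a}z(\phi)\,da$ appearing in Step~2 of Theorem~A.\ref{TheoA.1}), and you correctly isolate it as the main obstacle, but it is extra work.

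The paper's route—through \cite[Section~3]{delgado_nonlinear_2006} together with Step~1 of Theorem~A.\ref{TheoA.1}—sidesteps the obstacle by \emph{decoupling} the renewal condition in the iteration: one defines $u_{n+1}$ as the solution of
\[
(u_{n+1})_a-\Delta u_{n+1}+(q+\lambda)u_{n+1}=g(\cdot,\cdot,u_n)+\lambda u_n,\qquad u_{n+1}(\cdot,0)=\int_0^A\beta^*(\cdot,a)\,u_n(\cdot,a)\,da,
\]
with the age-zero trace \emph{prescribed} from the previous iterate. Each linear problem then falls under the comparison of Step~1 of Theorem~A.\ref{TheoA.1} (fixed initial datum $\phi$, no renewal coupling), and the anchoring $\overline u_1\le\overline u$ follows immediately from $\overline u(\cdot,0)\ge\int_0^A\beta^*\overline u\,da$ together with the monotonicity of $s\mapsto g(\cdot,\cdot,s)+\lambda s$ on $[0,M]$. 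The monotonicity of the two sequences, the uniform $L^\infty$ bound, and the passage to the limit proceed exactly as you describe; the renewal condition for the limit is recovered at the end because $u_{n+1}(\cdot,0)=\int_0^A\beta^* u_n\,da$ converges on both sides. So your plan is correct, but you can eliminate its sole technical hurdle by freezing the renewal datum one step behind.
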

\begin{proof}
We can adapt the proof in \cite[Section 3]{delgado_nonlinear_2006} to the case of the homogeneous Neumann boundary condition, with some modifications similar to those in Theorem A.\ref{TheoA.1}-Step 1. Thus, we omit the details.
\end{proof}

Next, it is necessary to prove the existence of some age-structure models. Consider
\begin{align}\label{eq:KPPage}
\begin{split}
\left\{\begin{array}{llll}
u_{a}-d\Delta_Nu+
\mu(x,a)u=u[k(x,a)-n_0(x,a)u],&(x,a)\in\mathcal{O}_A,\\
\dfrac{\partial u}{\partial \textbf{n}}=0,&x\in \partial \O,~a\in (0,A),\\
u(x,0) = \displaystyle\int_0^{A} \beta(x,a) u(x,a)da,&x\in \O.
\end{array}\right.
\end{split}
\end{align}
where $\mu$ and $\beta$ are the one in \ref{cond:cond3} - \ref{cond:cond4} and \ref{cond:cond5}, $k,~n_0\in C^2(\overline{\mathcal{O}_A})$ and $n_0>0$. Inspired by the work of \cite[Proposition 5.3]{kang_effects_2022}, we consider the following eigenvalue-eigenfunction problem 
\begin{align*}
\begin{split}
\left\{\begin{array}{llll}
u_{a}-d\Delta_Nu+
\mu(x,a)u-k(x,a)u=\lambda^* u,&(x,a)\in\mathcal{O}_A,\\
\dfrac{\partial u}{\partial \textbf{n}}=0,&x\in \partial \O,~a\in (0,A),\\
u(x,0) = \displaystyle\int_0^{A} \beta(x,a) u(x,a)da,&x\in \O.
\end{array}\right.
\end{split}
\end{align*}
This problem admits a unique principal eigenvalue  $\lambda^*_{d,-k+\mu}$. 

We begin with the comparison principle for sub- ans super-solutions as follows

\begin{lma}\label{comparison}
Suppose \ref{cond:cond3} to \ref{cond:cond5} hold. Let $v\in C(\overline{\mathcal{O}_A})$ be a super-solution of \eqref{eq:KPPage} and  $u\in C(\overline{\mathcal{O}_A})$ be a sub-solution of \eqref{eq:KPPage}. Suppose further, for any $0<A_3<A$, that
\begin{align*}
u,v\in C^{2,1}(\mathcal{O}_{A_3})
\end{align*}
and $u>0,~v>0$ on $\overline{\mathcal{O}_{A_3}}$. Then, $u\leq v$ on $\overline{\mathcal{O}_A}$. In addition, if either $v$ is a super-solution but not a solution or $u$ is a sub-solution but not a solution, then $u<v$ on $\overline{\O}\times [0,A)$.
\end{lma}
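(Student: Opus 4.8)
The plan is to prove the comparison by a sliding (scaling) argument combined with the parabolic strong maximum principle and Hopf's lemma, treating the age variable $a$ as a time variable so that $\partial_a-d\Delta_N$ is a (forward) parabolic operator with bounded coefficients on each slab $\overline{\mathcal{O}_{A_3}}$, $A_2<A_3<A$. Since $u,v>0$ are continuous on the compact set $\overline{\mathcal{O}_{A_3}}$, one has $v\geq\delta>0$ and $u$ bounded there, so $\{\tau\geq 1:\ u\leq\tau v\text{ on }\overline{\mathcal{O}_{A_3}}\}$ is nonempty; I set $\tau^*$ to be its infimum. It suffices to show $\tau^*\leq 1$, for then $u\leq v$ on $\overline{\mathcal{O}_{A_3}}$ for every $A_3<A$, and letting $A_3\to A$ (both functions vanishing as $a\to A$ by \ref{cond:cond4}) gives $u\leq v$ on $\overline{\mathcal{O}_A}$.

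First I would assume for contradiction that $\tau^*>1$. The crucial algebraic point is that the logistic nonlinearity is self-limiting: because $\tau^*>1$, $v>0$ and $n_0>0$, scaling the super-solution inequality for $v$ shows that $\tau^*v$ is again a super-solution of the \emph{same} equation \eqref{eq:KPPage}, namely $(\tau^*v)_a-d\Delta_N(\tau^*v)+\mu\,(\tau^*v)\geq(\tau^*v)[k-n_0(\tau^*v)]$, and it plainly still satisfies the renewal inequality $(\tau^*v)(x,0)\geq\int_0^A\beta\,(\tau^*v)\,da$. Writing $z:=\tau^*v-u\geq0$, by minimality of $\tau^*$ the function $z$ touches zero at some $(x_0,a_0)\in\overline{\mathcal{O}_{A_3}}$, and subtracting the sub-solution inequality for $u$ from the super-solution inequality for $\tau^*v$ yields $z_a-d\Delta_N z+\tilde c\,z\geq0$ on $\mathcal{O}_{A_3}$, where $\tilde c:=\mu-k+n_0(\tau^*v+u)$ is bounded on $\overline{\mathcal{O}_{A_3}}$, together with $\partial z/\partial\mathbf n=0$ and $z(x,0)\geq\int_0^A\beta z\,da$.

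Next I would rule out every location of the touching point $(x_0,a_0)$. If $a_0\in(0,A_3]$ and $x_0\in\O$, the parabolic strong maximum principle applied to the nonnegative supersolution $z$ (the term $\tilde c\,z$ vanishes at a zero of $z$, so the sign of $\tilde c$ is irrelevant) forces $z\equiv0$ on $\overline{\O}\times[0,a_0]$; if $x_0\in\partial\O$, Hopf's lemma contradicts the Neumann condition unless again $z\equiv0$ up to $a_0$. But $z\equiv0$ means $u=\tau^*v$ on $\overline{\O}\times[0,a_0]$, and inserting this equality into the two inequalities collapses them, giving $v(k-n_0v)\leq v(k-n_0\tau^*v)$, i.e. $(\tau^*-1)v^2\leq0$, whence $v\equiv0$, contradicting $v>0$. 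Finally, if the only touching point has $a_0=0$, then $z(x_0,0)=0$ while $z>0$ on $\overline{\O}\times(0,A_3)$; evaluating the renewal inequality at $x_0$ and using $\beta(x_0,\cdot)>0$ on $(A_0,A_1)\subset(0,A_3)$ from \ref{cond:cond5} gives $0=z(x_0,0)\geq\int_0^A\beta z\,da>0$, a contradiction. Hence $\tau^*\leq1$, so $u\leq v$. For the strict statement, once $u\leq v$ is known I set $z:=v-u\geq0$, which satisfies $z_a-d\Delta_N z+[\mu-k+n_0(u+v)]z\geq0$ with strict inequality on a set of positive measure whenever $u$ is a strict sub- or $v$ a strict super-solution; the same strong-maximum-principle/Hopf dichotomy then shows $z$ cannot vanish at any point with age in $(0,A)$ without being identically zero up to that age (incompatible with the strict gap), while a zero at $a=0$ is excluded exactly as above, giving $u<v$ on $\overline{\O}\times[0,A)$.

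The delicate points, where I expect to spend the most care, are the two nonstandard faces of the parabolic cylinder: the nonlocal renewal coupling at $a=0$, which is handled precisely because $\supp(\beta)\subset\overline{\O}\times(A_0,A_2]$ sits strictly inside $(0,A_3)$, so that a zero forced at $a=0$ contradicts strict positivity at fertile ages; and the degenerate final age, handled by working on $\overline{\mathcal{O}_{A_3}}$ with $A_3<A$ (where $\mu$, and hence $\tilde c$, is bounded) and only afterwards letting $A_3\to A$. The technical heart is to verify that the parabolic strong maximum principle and Hopf's lemma genuinely apply up to and including the final age $a_0=A_3$ and at lateral Neumann boundary points of the smooth domain $\O$; modulo this, the argument is routine.
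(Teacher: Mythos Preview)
Your proposal is correct and follows essentially the same sliding/scaling argument as the paper, combining the parabolic strong maximum principle, Hopf's lemma, and the nonlocal renewal condition on each slab $\overline{\mathcal{O}_{A_3}}$. The only cosmetic difference is the direction of scaling: you inflate the super-solution ($\tau^*v\geq u$ and show $\tau^*\leq 1$), whereas the paper deflates the sub-solution ($\alpha^*u\leq v$ and shows $\alpha^*\geq 1$); these are dual via $\tau^*=1/\alpha^*$, and both exploit the same sub-homogeneity of $s\mapsto s(k-n_0 s)$.
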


\begin{proof}
By arguments similar to those in Remark \ref{lim0}, we can show that $v(x,A)=0$.

The strategy is to prove the inequality on $\overline{\mathcal{O}_{A_3}}$ for some $0<A_3<A$ to avoid the blow-up property of $\mu$. Then, we pass to the limit to obtain the result. Let us recall $0<A_0<A_2<A$ in \ref{cond:cond5} such that $\supp({\beta})\subset \overline{\O}\times (A_0,A_2]$.

Consider $A_2<A_3<A$. It is known that $u,~v\in C\left(\overline{\mathcal{O}_{A_3}}\right)$. Thus, there exists $\alpha>0$ such that $\alpha u\leq v$ on $\overline{\mathcal{O}_{A_3}}$. Define
\begin{align*}
\alpha^*=\sup\{\alpha>0:\alpha u\leq v\text{ on }\overline{\mathcal{O}_{A_3}}\}.
\end{align*}
Put $w(x,a)=v(x,a)-\alpha^*u(x,a)\geq 0$. It is easy to show that there exists $(x_0,a_0)\in \overline{\mathcal{O}_{A_3}}$ such that $w(x_0,a_0)=0$. If $\alpha^*\geq 1$, then we are done. Suppose otherwise $\alpha^*<1$. 

If  $a_0=0$, by \ref{cond:cond5}, 
\begin{align*}
0=w(x_0,0)=\int_0^A\beta(x_0,a)w(x_0,a)da=
\int_0^{A_2}\beta(x_0,a)w(x_0,a)da.
\end{align*}
One can find $a_1 \in (0,A_2)\subset (0,A_3)$ such that $w(x_0,a_1)=0$. Thus, without loss generality, we may assume $a_0>0$. It is easy to check that
\begin{align*}
w_a-d\Delta_N w + \mu(x,a)w&\geq  v(k-n_0v)-\alpha^* u(k-n_0u)\\
&> v(k-n_0v)-\alpha^* u(k-\alpha^*n_0u)\\
&=(k-n_0v-\alpha^*n_0u)w,
\end{align*}
or
\begin{align*}
w_a-d\Delta_N w + (\mu(x,a)-k+n_0v+\alpha^*n_0u)w>0.
\end{align*}
There exists $C_0>0$ large enough such that
\begin{align*}
w_a-d\Delta_N w + C_0w>0.
\end{align*}
Since $v$ is super-solution and $u$ is the sub-solution, it is easy to check that $\dfrac{\partial w}{\partial \textbf{n}}\geq 0$ on $\partial \O$ and $(x_0,a_0)$ is a minimum point of $w$. By Hopf's lemma and the strong maximum principle for the parabolic equation, we must have $w\equiv 0$ on $\overline{\O}\times [0,a_0]$. Contradiction. Hence, $\alpha^*\geq 1$ and $u\leq v$ on $\overline{\mathcal{O}_{A_3}}$. As the result, one gets that $u\leq v$ on $\overline{\mathcal{O}_A}$. 

To prove the last statement, we assume, for the sake of contradiction, there exists $A_2<A_3<A$, $(x_2,a_2)$ and $0<a_2<A_3$ such that $w(x_2,a_2):=u(x_2,a_2)-v(x_2,a_2)=0$. By same arguments, one has $u=v$. Contradiction.

This completes the proof.

\end{proof}

%
%

Let us state the existence result as follows
\begin{tha}\label{Lemma A.7}
Assume \ref{cond:cond3} to \ref{cond:cond5} hold. Suppose further $\lambda^*_{d,-k+\mu}<0$. Then, \eqref{eq:KPPage} admits a unique solution.
\end{tha}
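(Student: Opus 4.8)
The plan is to apply the sub- and super-solution existence theorem stated just before Lemma~\ref{comparison}, together with the comparison principle of Lemma~\ref{comparison}, in the spirit of \cite[Theorem~14]{delgado_nonlinear_2006} and \cite[Proposition~5.3]{kang_effects_2022}. The hypothesis $\lambda^*_{d,-k+\mu}<0$ is exactly what turns the zero state into a strict sub-solution after rescaling, and thereby forces a nontrivial positive equilibrium.

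First I would construct the sub-solution. Let $\phi>0$ be the principal eigenfunction associated with $\lambda^*_{d,-k+\mu}$ furnished by Theorem A.\ref{TheoA.1}, so that $\phi_a-d\Delta_N\phi+\mu\phi-k\phi=\lambda^*_{d,-k+\mu}\phi$ with $\phi(x,0)=\int_0^A\beta\phi\,da$. Put $\underline{u}:=\varepsilon\phi$. After dividing by $\underline{u}>0$, the sub-solution differential inequality reduces to $\lambda^*_{d,-k+\mu}\le -n_0\underline{u}=-\varepsilon n_0\phi$; since $\lambda^*_{d,-k+\mu}<0$ and $n_0\phi$ is bounded, this holds for all $\varepsilon>0$ small, while the age condition holds with equality because $\phi$ already satisfies it.

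Next, for the super-solution I would reuse the barrier $Y_\alpha$ from Lemma~\ref{Lemma:super}, setting $\overline{u}:=MY_\alpha$. Since $Y_\alpha$ is $x$-independent, $\Delta_N\overline{u}=0$, and using $(Y_\alpha)_a=\alpha Y_\alpha-Y_\alpha^2-\mu_{\min}Y_\alpha$ the super-solution inequality reduces to $\alpha-Y_\alpha+(\mu-\mu_{\min})\ge k-n_0M Y_\alpha$; because $\mu\ge\mu_{\min}$, $Y_\alpha\le1$ and $k\le\hat k:=\max k$, this is guaranteed once $\alpha>\hat k+1$, uniformly in $M$. The upper-bound age condition $\int_0^A\beta Y_\alpha\,da\le Y_\alpha(0)$ built into $Y_\alpha$ gives $\overline{u}(x,0)\ge\int_0^A\beta\overline{u}\,da$, and crucially $\overline{u}\le M\in L^\infty(\mathcal{O}_A)$, so the super-solution meets the boundedness requirement of the existence theorem despite the $L^1$-blow-up of $\mu$ at $a=A$. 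Comparing the $a\to A$ decay rates, $\phi$ decaying like $\pi(a)e^{\lambda^* a}$ (Remark~\ref{lim0}) and $Y_\alpha\sim y_0\pi(a)e^{\alpha a}$, shows $\phi/Y_\alpha$ is bounded, so choosing $M$ large yields $0\le\underline{u}\le\overline{u}$ on $\overline{\mathcal{O}_A}$. The sub-supersolution theorem then produces a solution $u$ with $\varepsilon\phi\le u\le MY_\alpha$, hence $u>0$ on $\overline{\O}\times[0,A)$; a standard bootstrap (as in \cite[Proposition~1]{ducrot_travelling_2007}) upgrades it to $u\in C(\overline{\mathcal{O}_A})\cap C^{2,1}(\mathcal{O}_A)$.

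For uniqueness I would invoke Lemma~\ref{comparison}. Any positive solution is simultaneously a sub- and a super-solution and, by the same bootstrap, lies in $C^{2,1}(\mathcal{O}_{A_3})$ and is positive on $\overline{\mathcal{O}_{A_3}}$ for every $A_2<A_3<A$. Given two positive solutions $u_1,u_2$, applying Lemma~\ref{comparison} once with $u_1$ as sub-solution and $u_2$ as super-solution yields $u_1\le u_2$, and once with the roles reversed yields $u_2\le u_1$, whence $u_1=u_2$. The main obstacle I anticipate is not existence itself but the compatibility of the ordered pair with both the nonlocal renewal condition $u(x,0)=\int_0^A\beta u\,da$ and the degeneracy of $\mu$ at $a=A$; using $Y_\alpha$ as an $L^\infty$ super-solution that intrinsically satisfies the age condition, and matching the $a\to A$ decay of $\phi$ and $Y_\alpha$, is the device that overcomes it.
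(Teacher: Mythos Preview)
Your argument is correct and follows the same overall strategy as the paper (sub-/super-solution existence plus Lemma~\ref{comparison} for uniqueness), but with a different super-solution and a global-in-$a$ rather than truncate-then-extend construction. The paper takes $\overline{u}=M\pi(a)$, which only satisfies the super-solution inequality on $\overline{\mathcal{O}_{A_3}}$ (because $M n_0\pi\ge k$ fails as $\pi(a)\to0$); it therefore first obtains a solution on each $\overline{\mathcal{O}_{A_3}}$, proves uniqueness there via Lemma~\ref{comparison}, and then extends to $\overline{\mathcal{O}_A}$ using a parabolic comparison bound $u\le e^{Ca}\pi(a)T_{d\Delta_N}(a)u_0$. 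Your choice $\overline{u}=MY_\alpha$ exploits the quadratic term in the ODE for $Y_\alpha$ so that the super-solution inequality holds uniformly on $\mathcal{O}_A$ once $\alpha>\hat k+1$, which lets you invoke the sub-/super-solution theorem directly on the full domain and skip the extension step; this is a genuine simplification. One small caution: Remark~\ref{lim0} only asserts $\phi(\cdot,a)\to0$, not the rate $\pi(a)e^{\lambda^* a}$; to get $\phi/Y_\alpha$ bounded you should instead argue by comparison (as in \eqref{estimate1} or the paper's Step~3 estimate) that $\phi(x,a)\le C e^{Ka}\pi(a)$, or simply bypass the decay matching altogether by applying Lemma~\ref{comparison} to the pair $(\varepsilon\phi,MY_\alpha)$, which is exactly how the paper orders its sub- and super-solutions.
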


\begin{proof}
The proof divides into two steps

\textbf{Step 1: Existence}. Let $\phi$ be the bounded eigenfunction associated with the eigenvalue $\lambda^*_{d,-k+\mu}$, $\phi>0$ on $\overline{\O}\times [0,A)$ and $\phi(x,A)=0$ for any $x\in \overline{\O}$. We choose $\epsilon>0$ small enough so that 
\begin{align*}
\epsilon n_0(x,a)\phi \leq \epsilon \max n_0 \max \phi \leq -\lambda^*_{d,-k+\mu}
\end{align*}
Define $\underline{u}:=\epsilon \phi$ for some $\epsilon>0$, then
\begin{align*}
\underline{u}_{a}-d\Delta_N\underline{u}+
\mu(x,a)\underline{u}-k(x,a)\underline{u}=\lambda^*_{d,-k+\mu} \underline{u}\leq -n_0(x,a)\underline{u}^2.
\end{align*} 
which is a sub-solution.

Next, for any $A_2<A_3<A$, recall $\pi=\pi(a)=e^{-\displaystyle\int_0^a \mu_{\min}(k)dk}$, we define $\overline{u}:=M\pi(a)$ for some constant $M>0$. Then, 
\begin{align*}
\begin{array}{lllll}
\overline{u}_a-d\Delta_N \overline{u} +\mu\overline{u}-\overline{u}k+n_0\overline{u}^2&= -\mu_{\min}\overline{u}+\mu\overline{u}-\overline{u}k+n_0\overline{u}^2\\ &\geq  M^2n_0\pi^2(a) - kM\pi(a)\geq 0
\end{array}
\text{ on }\overline{\mathcal{O}_{A_3}}
\end{align*}
provided that $M>0$ large enough. The age-structure can be proved as follows
\begin{align*}
\int_0^{A_3}\beta(x,a)\overline{u}(x,a)da = M \int_0^{A_3}\beta(x,a)\pi(a)da \leq M\int_0^{A_3}\beta_{\max}(a)\pi(a)da\leq M
\end{align*}
since \ref{cond:cond5}. Hence, $\overline{u}$ is a super-solution. By applying Lemma A.\ref{comparison}, we get $\underline{u}\leq \overline{u}$. Then, thanks to sub-super solution method, there exists $u_{A_3}$ is a solution of the equation \eqref{eq:KPPage} on $\overline{\mathcal{O}_{A_3}}$. 

\textbf{Step 2: Uniqueness}. Let $A_3$ be a positive number satisfying $A_2<A_3<A$, $u$ and $v$ be two nonnegative bounded solution of \eqref{eq:KPPage}. Using the bootstrap arguments in \cite[Proposition 1]{ducrot_travelling_2007} and \cite[pp 22/480]{ducrot_travelling_2009}, one has
\begin{align*}
u,~v\in C(\overline{\mathcal{O}_{A_3}}) \cap C^{2,1}(\mathcal{O}_{A_3})
\end{align*}
By strong parabolic maximum principle, one gets $u>0$, $v>0$ on $\overline{\O} \times [0,A_3]$. Since $u,~v$ are solutions, one can apply the comparison principle in Lemma A.\ref{comparison} so that $u\leq v$ and $v\leq u$ on $\overline{\mathcal{O}_{A_3}}$.

\textbf{Step 3: Extension}. By the uniqueness, one can extend the solution $u$ on $\overline{\O}\times [0,A_{\max})$, for some $A_2<A_{\max}\leq A$. Furthermore, we have
\begin{align*}
u_a-d\Delta_N u \leq Cu -\mu_{\min}(a)u \text{ on }\overline{\O}\times (0,A_{max})
\end{align*}
for some $C\geq k(x,a)$ on $\overline{\mathcal{O}_A}$, $C$ is independent of $A_3$ and $u\geq 0$ on $[0,A_{\max})$. Thus, by parabolic comparison principle, one gets
\begin{align*}
0<u\leq e^{Ca-\displaystyle\int_0^a\mu_{\min}(s)ds} T_{d\Delta_N}(a)u_0 \text{ on }\overline{\O}\times (0,A_{\max})
\end{align*}
where $u_0(x)=u(x,0)$, $T_{d\Delta_N}$ is the $C_0$ semi-group generated by $-d\Delta_N$. Note that  to calculate $u_0$, one only needs the data of $u$ on $\overline{\O}\times [0,A_2]\subset\overline{\O}\times [0,A_{max})$ since $\supp(\beta)\subset \overline{\O}\times (A_0,A_2]$. As the results, the right hand side is independent of the $A_{max}$ and 
\begin{align*}
\left\|e^{Ca-\displaystyle\int_0^a\mu_{\min}(s)ds} T_{d\Delta_N}(a)u_0\right\|_{L^{\infty}(\mathcal{O}_A)}\leq e^{CA}||u_0||_{L^{\infty}(\O)}<\infty
\end{align*}
Consequently, one can extend the solution $u$ on $\overline{\O}\times [0,A)$ or $A_{\max}=A$. Furthermore, by passing to the limit $a\rightarrow A$, we obtain that $u(x,A)=0$ uniformly in $x\in \overline{\O}$. Thus, one can extend $u\in C(\overline{\mathcal{O}_A})$ so that $u>0$ on $\overline{\O} \times [0,A)$ and $u(x,A)=0$ for any $x\in \overline{\O}$. 

This completes the proof.

\end{proof}

\begin{remark} \label{lim1}
One can check that $u(x,A)=0$ and $\lim\limits_{a\rightarrow A} u(x,a)=0$ uniformly with $x\in \overline{\O}$ thanks to regular parabolic comparison principle.
\end{remark}

Lastly, consider
\begin{align}\label{eq:KPPage1}
\begin{split}
\left\{\begin{array}{llll}
u_{a}-d\Delta_Nu+
\mu(x,a)u=k_1(x,a)\dfrac{h(x,a)u}{h(x,a)+u}-k_2(x,a)u,&(x,a)\in\mathcal{O}_A,\\
\dfrac{\partial u}{\partial \textbf{n}}=0,&\text{on } \partial \O \times (0,A),\\
u(x,0) = \displaystyle\int_0^{A} \beta(x,a) u(x,a)da,&x\in \O.
\end{array}\right.
\end{split}
\end{align}
where $\mu$ and $\beta$ are the one in \ref{cond:cond3} - \ref{cond:cond4} and \ref{cond:cond5}, $k_1,~k_2,~h\in C^2(\overline{\mathcal{O}_A})$, $k_1\geq 0$ is non-trivial, $k_1>0$ on $\overline{\mathcal{O}_A}$ and $h>0$ on $\overline{\O}\times [0,A)$ and $h(x,A)=0$ for any $x\in \overline{\O}$. For convenience, we define
\begin{align*}
g(x,a,u)=\left\{\begin{array}{llll}
k_1(x,a)\dfrac{h(x,a)u}{h(x,a)+u}-k_2(x,a)u,&h>0 \text{ and }u>0,\\
0&h=0\text{ or }u=0.
\end{array}\right.
\end{align*}
It is easy to prove that $g:\overline{\mathcal{O}_A}\times [0,\infty)$.

\begin{tha} \label{Lemma:A.10}
Assume \ref{cond:cond3} to \ref{cond:cond5} hold. Suppose further $\lambda^*_{d,-k_1+k_2+\mu}<0$. Then, \eqref{eq:KPPage1} admits an unique solution $u$ on $\overline{\mathcal{O}_A}$ such that $u>0$ on $\overline{\O}\times [0,A)$ and $u(x,A)=0$ for any $x\in \overline{\O}$.
\end{tha}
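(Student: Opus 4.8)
The plan is to follow the three–step scheme of the proof of Theorem~A.\ref{Lemma A.7}, replacing the logistic reaction by the quotient nonlinearity $g(x,a,u)=k_1\frac{hu}{h+u}-k_2u$. First I observe that on any slab $\overline{\mathcal{O}_{A_3}}$ with $A_2<A_3<A$ the coefficient $h$ is bounded below by a positive constant (since $h>0$ on $\overline{\O}\times[0,A)$) and $\mu$ is bounded, so the two degeneracies at $a=A$ (where $h(x,A)=0$ and $\mu$ blows up) are avoided; I work on $\overline{\mathcal{O}_{A_3}}$ and extend afterwards. Since $\partial_u\!\left(\frac{hu}{h+u}\right)=\frac{h^2}{(h+u)^2}\in[0,1]$, the map $g(x,a,\cdot)$ is Lipschitz on bounded sets, so the sub- and super-solution method of the appendix applies once an ordered pair is produced.

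For existence I take the sub-solution $\underline{u}:=\epsilon\phi$, where $\phi>0$ is the principal eigenfunction of $\lambda^*_{d,-k_1+k_2+\mu}<0$. Using $\phi_a-d\Delta_N\phi+\mu\phi=(k_1-k_2+\lambda^*_{d,-k_1+k_2+\mu})\phi$, the sub-solution inequality reduces, after dividing by $\epsilon\phi>0$, to
\[
\lambda^*_{d,-k_1+k_2+\mu}\le -k_1\frac{\epsilon\phi}{h+\epsilon\phi}\quad\text{on }\overline{\mathcal{O}_{A_3}}.
\]
On the compact slab the right-hand side tends to $0^-$ uniformly as $\epsilon\to0^+$, so the strict negativity of the principal eigenvalue makes the inequality hold for $\epsilon$ small, while the age condition holds with equality. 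For the super-solution I take $\overline{u}:=M\pi(a)$ as in Lemma~\ref{Lemma:super} and Theorem~A.\ref{Lemma A.7}: because $\frac{hu}{h+u}\le\min\{h,u\}$ the reaction $g$ is sublinear and bounded above by $k_1h$, so for $M$ large enough and $k_2\ge0$ (as in the applications, where $k_2=q>0$) the inequality $(\mu-\mu_{\min})M\pi\ge k_1\frac{hM\pi}{h+M\pi}-k_2M\pi$ holds, with the age condition following from \ref{cond:cond5}. The comparison principle then orders $\underline{u}\le\overline{u}$, and the sub–super method yields a solution $u_{A_3}$ on $\overline{\mathcal{O}_{A_3}}$.

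For uniqueness I would adapt the sliding argument of Lemma~A.\ref{comparison} to the quotient nonlinearity. The decisive structural fact is that $u\mapsto g(x,a,u)/u=k_1\frac{h}{h+u}-k_2$ is strictly decreasing, equivalently $g(x,a,\cdot)$ is strictly concave with $g(x,a,0)=0$; hence $g(x,a,\alpha u)\ge\alpha\,g(x,a,u)$ for $0<\alpha<1$, strictly where $u>0$. Given two solutions $u,v$, set $\alpha^*=\sup\{\alpha>0:\alpha u\le v\}$ and $w=v-\alpha^*u$. If $\alpha^*<1$, strict concavity together with the Lipschitz bound on $g$ gives $w_a-d\Delta_N w+C_0w>0$ for $C_0$ large, and Hopf's lemma with the strong parabolic maximum principle at the interior contact point $w(x_0,a_0)=0$ forces a contradiction (a contact at $a_0=0$ being pushed to an interior age by the age condition, exactly as in Lemma~A.\ref{comparison}). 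Thus $\alpha^*\ge1$, i.e.\ $u\le v$, and exchanging the roles gives $u=v$ on $\overline{\mathcal{O}_{A_3}}$.

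Finally, as in Step~3 of Theorem~A.\ref{Lemma A.7}, uniqueness lets me glue the slab solutions into a solution on $\overline{\O}\times[0,A)$; the linear majorant $g(x,a,u)\le(k_1+|k_2|)u=:Cu$ yields $0<u\le e^{Ca-\int_0^a\mu_{\min}(s)ds}T_{d\Delta_N}(a)u_0$ on $\overline{\O}\times(0,A)$, whence $\lim_{a\to A}u(x,a)=0$ uniformly in $x$, $u\in C(\overline{\mathcal{O}_A})$, $u>0$ on $\overline{\O}\times[0,A)$ and $u(x,A)=0$. I expect the main obstacle to be the uniqueness/comparison step: unlike the logistic case the reaction is a bounded quotient, so one cannot use the explicit quadratic structure and must instead drive the sliding method by the strict concavity (strict sublinearity) of $g(x,a,\cdot)$, which is precisely the property that replaces the role of the term $-n_0u^2$ in Lemma~A.\ref{comparison}.
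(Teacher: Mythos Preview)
Your proposal is correct and follows essentially the same three–step scheme as the paper: sub-solution $\epsilon\phi$, super-solution $M\pi$, sliding uniqueness on slabs $\overline{\mathcal{O}_{A_3}}$, then extension to $a=A$ via the linear majorant. The only cosmetic difference is in Step~2: where you invoke the strict sublinearity $g(x,a,u)/u$ decreasing to drive the sliding argument, the paper carries out the explicit algebra
\[
w_a-d\Delta_Nw+(\mu+k_2)w\ \ge\ k_1h\,\frac{hw+(1-\alpha^*)uv}{(h+u)(h+v)}\ >\ k_1\frac{h^2}{(h+u)(h+v)}\,w\ \ge 0,
\]
which is precisely your concavity observation made concrete; and for the super-solution the paper bounds $k_1\frac{h\overline u}{h+\overline u}\le k_1h$ and then uses $Mk_2\pi-k_1h\ge0$ for $M$ large (so the positivity of $k_2$ carries the weight, not the term $(\mu-\mu_{\min})M\pi$, which vanishes where the minimum is attained).
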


\begin{proof}

We divide the proof into three steps:

\textbf{Step 1: Existence}. Let $A_2<A_3<A$ and $\phi$ be the bounded positive eigenfunction associated with the eigenvalue $\lambda^*_{-k+\mu}$. It is known that $\phi,~h>0$ on $\overline{\mathcal{O}_{A_3}}$. Define $\underline{u}:=\epsilon \phi$ for some $\epsilon>0$, then
\begin{align*}
\underline{u}_{a}-d\Delta_N\underline{u}+
\mu(x,a)\underline{u}=\lambda^*_{d,-k_1+k_2+\mu} \underline{u} +k_1(x,a)\underline{u}-k_2(x,a)\underline{u}.
\end{align*} 
Consider 
\begin{align*}
\left|k_1(x,a)\dfrac{h(x,a)}{h(x,a)+\underline{u}}-k_1(x,a)\right|&=\left|k_1(x,a)\dfrac{\epsilon \phi}{h(x,a)+\epsilon \phi}\right|\\
&\leq k_1(x,a)\dfrac{\epsilon \phi}{h(x,a)}\leq K \epsilon.
\end{align*}
for some $K>0$. Choose $\epsilon>0$ small enough so that $K \epsilon<-\lambda^*_{d,-k_1+k_2+\mu}$. Then,
\begin{align*}
k_1(x,a)-k_1(x,a)\dfrac{h(x,a)}{h(x,a)+\underline{u}}<-\lambda^*_{d,-k_1+k_2+\mu}
\end{align*}
As the results,
\begin{align*}
\underline{u}_{a}-d\Delta_N\underline{u}+
\mu(x,a)\underline{u}\leq k_1(x,a)\dfrac{h(x,a)}{h(x,a)+\underline{u}}\underline{u}-k_2(x,a)\underline{u},
\end{align*}
which is a sub-solution. Let $\overline{u}:=M\pi(a)$ for some constant $M>0$. Then, 
\begin{align*}
\overline{u}_a-d\Delta_N \overline{u} +\mu\overline{u}-k_1\dfrac{h\overline{u}}{h+\overline{u}}+k_2\overline{u}\geq -k_1 h +k_2\overline{u}= Mk_2\pi(a)-k_1 h \geq 0,
\end{align*}
provided that $M>0$ large enough and $\pi(a)>0$ on $[0,A_3]$. The age-structure can be shown as follows
\begin{align*}
\int_0^A\beta(x,a)\overline{u}(x,a)da = M \int_0^A\beta(x,a)\pi(a)da \leq M \int_0^A\beta_{\max}(a)\pi(a)da\leq M
\end{align*}
thanks to \ref{cond:cond5}. Thus, $\underline{u}$ is a super-solution. We can choose $M>0$ large enough so that $\underline{u}\leq \overline{u}$ on $\overline{\mathcal{O}_{A_3}}$. By applying sub-super solution method on $\overline{\mathcal{O}_{A_3}}$, there exists $u_{A_3}$ is a solution of the equation \eqref{eq:KPPage1} on $\overline{\O}\times[0,A_3]$. This is possible since 
\begin{align*}
\int_0^A\beta(x,a)u(x,a)da= \int_0^{A_2}\beta(x,a)u(x,a)da= \int_0^{A_3}\beta(x,a)u(x,a)da.
\end{align*}

\textbf{Step 2: Comparison principle and uniqueness}. The comparison principle on $\overline{\mathcal{O}_{A_3}}$ can be obtain by a similar manner to the one in Lemma  A.\ref{comparison}. We highlight an important detail: Consider $\alpha^*<1$ and $w=v-\alpha^*u$ as in Lemma  A.\ref{comparison}, one has
\begin{align*}
w_a - d\Delta_Nw +\mu(x,a)w+ k_2(x,a)w&\geq k_1h\dfrac{hw+(1-\alpha^*)uv}{(h+u)(h+v)}\\&>k_1\dfrac{h^2w}{(h+u)(h+v)}\geq 0.
\end{align*}
The conclusion $u<v$ on $\overline{\mathcal{O}_{A_3}}$ still hold if either $v$ is a super-solution but not a solution or $u$ is a sub-solution but not a solution. Hence, the uniqueness can be achieved in this case.

\textbf{Step 3: Extension}. By the uniqueness, one can extend the solution $u$ on $\overline{\O}\times [0,A_{\max})$, for some $A_2<A_{\max}\leq A$. Furthermore, we have
\begin{align*}
u_a-d\Delta_N u \leq Cu -\mu_{\min}(a)u \text{ on }\overline{\O}\times (0,A_{\max})
\end{align*}
for some $C>k_1-k_2$ on $\overline{\mathcal{O}_A}$, $C$ is independent of $A_{\max}$. Thus, by parabolic comparison principle, one gets
\begin{align*}
0<u\leq e^{Ca-\displaystyle\int_0^a\mu_{\min}(s)ds} T_{d\Delta_N}(a)u_0 \text{ on }\overline{\O}\times (0,A_{\max})
\end{align*}
where $u_0(x)=u(x,0)$, $T_{d\Delta_N}$ is the $C_0$ semi-group generated by $-d\Delta_N$. The rest proceeds in a similar manner to Step 3 of Theorem A.\ref{Lemma A.7}.
\end{proof}



\begin{center}
\textbf{Data availability statements and conflict of interest}
\end{center}
Data sharing not applicable to this article as no datasets were generated or analyzed
during the current study. There is no conflict of interest to this work.

\addcontentsline{toc}{section}{\bf References}

\end{document}